\title{Generalizations of the Kolmogorov-Barzdin embedding estimates}
\author{Misha Gromov and Larry Guth}
\newtheorem{lemma}{Lemma}[section]
\newtheorem{theorem}{Theorem}
\newtheorem{prop}[lemma]{Proposition}
\newtheorem{introtheorem}{Theorem}
\begin{document}

\begin{abstract} We consider several ways to measure the `geometric complexity' of an embedding from a simplicial complex into
Euclidean space.  One of these is a version of `thickness', based on a paper of Kolmogorov and Barzdin.  We prove inequalities
relating the thickness and the number of simplices in the simplicial complex, generalizing an estimate that Kolmogorov and Barzdin
proved for graphs.  We also consider the distortion of knots.  We give an alternate proof of a theorem of
Pardon that there are isotopy classes of knots requiring arbitrarily large distortion.  This proof is based on the expander-like properties
of arithmetic hyperbolic manifolds.
\end{abstract}

\maketitle

In this paper we study quantitative geometric estimates about embedding different spaces
into Euclidean space.  The main theme is the connection between topology and
geometry: if an embedding is topologically complicated, what kind of geometric
estimates does that imply?  Our results generalize a theorem by Kolmogorov and Barzdin \cite{KB}
from the 1960's about embedding graphs into $\mathbb{R}^3$.  Let's begin
by recalling what they did.

Given a topological embedding from a graph $\Gamma$ into $\mathbb{R}^3$, we say that
the embedding has thickness at least $T$ if the distance between any non-adjacent
edges is at least $T$, the distance between two vertices is at least $T$, and the
distance from an edge to a vertex not in the edge is at least $T$.  Roughly speaking,
one should imagine the vertices as balls of radius $T$ and the edges as (curved) tubes of
thickness $T$.  Kolmogorov and Barzdin mention as examples ``logical networks
and neuron networks" (\cite{KB} page 194).  A logical network probably refers to a computer
circuit where the edges correspond to wires and the vertices correspond to gates.  A neuron network
refers to a brain, where the vertices correspond to neurons and the edges correspond to
axons connecting the neurons.

Kolmogorov and Barzdin essentially proved the following theorem.

\begin{introtheorem} If $\Gamma$ is a graph of degree at most $d$ with $N$ vertices, then $\Gamma$ may be embedded 
with thickness 1 into a 3-dimensional Euclidean ball of radius $R \le C(d) N^{1/2}$.

 On the other hand, let $\Gamma$ be a random bipartite graph of degree $6$ with $2N$ vertices.  With high probability 
(tending to 1), there is no embedding of thickness 1 from $\Gamma$ into a ball of radius
$c N^{1/2}$. Moreover, if we embed $\Gamma$ into $\mathbb{R}^3$ with thickness 1, then 
the volume of the 1-neighborhood of the image is at least $c N^{3/2}$.
\end{introtheorem}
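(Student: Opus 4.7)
For the upper bound, my plan is a randomized construction in the spirit of \cite{KB}. Place the $N$ vertices at random points of a unit lattice inside a ball $B_R \subset \mathbb{R}^3$ of radius $R = C(d) N^{1/2}$; the volume $R^3 \sim N^{3/2}$ vastly exceeds $N$, so vertices are well separated. Route each of the $O_d(N)$ edges by a short random curve---for instance a two-segment axis-parallel path through a uniformly chosen intermediate lattice point. For such random routings the expected number of conflicts between any two routed edges is polynomially small in $N$, and a first-moment or local-lemma argument then yields, with positive probability, a routing respecting thickness $1$. The scaling $R \sim N^{1/2}$ is forced by matching the ball volume $\sim N^{3/2}$ against the total tube volume $\sim N \cdot R$.

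For the lower bound, I would start by recalling the standard fact (via the configuration model) that a random bipartite $6$-regular graph $\Gamma$ on $2N$ vertices is an edge-expander with high probability: there exists $\epsilon > 0$ such that $|E(S, S^c)| \ge \epsilon|S|$ whenever $|S| \le N$. Now suppose $\Gamma$ is embedded with thickness $1$. Fix a unit vector $\vec v$ and for each $t$ let $W(t)$ count the vertices on the negative side of the plane $P_t = \{\vec v \cdot x = t\}$ and $f(t)$ the number of edges crossing $P_t$. Expansion gives $f(t) \ge \epsilon \min(W(t),\,2N-W(t))$, and thickness forces those crossings through pairwise-disjoint unit disks, so
\begin{equation*}
A(t) := |P_t \cap N_1(\Gamma)| \;\ge\; (\pi/4)\,f(t) \;\ge\; c\epsilon \min\bigl(W(t),\,2N-W(t)\bigr).
\end{equation*}
Choosing $t$ so that $W(t) = N$ gives $A(t) \ge c\epsilon N$; combined with the bound $A(t) \le \pi(R+1)^2$ that holds when $\Gamma \subset B_R$, this yields the radius estimate $R \ge c' N^{1/2}$.

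To upgrade this to the volume bound, integrate the same cross-section inequality:
\begin{equation*}
V \;\ge\; \int A(t)\,dt \;\ge\; c\epsilon \sum_{i=1}^{2N} \bigl|z_i(\vec v)-z_N(\vec v)\bigr|,
\end{equation*}
where $z_N(\vec v)$ is the median of the projected vertex heights. A triangle-inequality estimate gives $\sum_i |z_i - z_N| \ge (4N)^{-1} \sum_{i,j}|z_i - z_j|$, and averaging over $\vec v \in S^2$ via the Cauchy--Crofton identity $\int_{S^2}|\langle \vec v, w\rangle|\,d\vec v = c_0 \|w\|$ gives $V \ge (c''/N) \sum_{i,j} \|p_i - p_j\|$. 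The remaining step is to show $\sum_{i,j} \|p_i - p_j\| \ge c N^{5/2}$, which I would establish by applying the radius estimate not only to $\Gamma$ but to the induced subgraph on every linear-sized subset $S$: a random bipartite regular graph remains an expander after removing a few vertices, so every large $S$ has enclosing radius $\ge c|S|^{1/2}$, and summing these bounds across scales yields the pairwise-distance estimate. I expect the last step---bootstrapping the single-scale radius estimate into a scale-invariant distribution statement about the embedded vertices---to be the technically most delicate part, since it requires the expansion property to be robust on all intermediate scales rather than at the single scale of the whole graph.
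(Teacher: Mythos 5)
Your lower-bound outline contains a genuinely different and attractive idea (replacing the paper's use of the Falconer slicing estimate by integrating the cross-section inequality in every direction and averaging via Cauchy--Crofton), but as written it has two gaps. First, a small one: combinatorial thickness only separates \emph{non-adjacent} edges, so the crossing points of $P_t$ with the cut edges need not all be $1$-separated; you must first pass to a subfamily of crossing edges no two of which share a vertex (losing a factor $2d+1$), as the paper does explicitly. Second, and more seriously, the final reduction to $\sum_{i,j}\|p_i-p_j\|\ge cN^{5/2}$ is not justified by your proposed mechanism. The induced subgraph of a random bipartite $6$-regular graph on an arbitrary linear-sized set $S$ is \emph{not} an expander in general --- one side of the bipartition is an independent set of size $N$ --- and even when $S$ does induce an expander, the Kolmogorov--Barzdin radius bound controls a ball containing the \emph{entire image} (edges included), not the diameter of the vertex cloud of $S$, since the edges of the induced subgraph may leave any ball containing $S$'s vertices. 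A statement of the kind you need (no ball of radius $o(N^{1/2})$ contains more than half the vertices) is in fact true, but proving it requires a further argument combining the bisecting-plane count with a tube-volume/length accounting near the candidate ball; it does not follow from the radius estimate applied to induced subgraphs. By contrast, the paper sidesteps this entirely: Falconer's estimate produces one direction in which \emph{every} slice of $N_1(I(\Gamma))$ has area $\lesssim V^{2/3}$, and the single bisecting slice in that direction immediately gives $N\lesssim V^{2/3}$, with no information about where the vertices sit.

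The upper bound is where the proposal fails outright. At the critical radius $R\sim N^{1/2}$ there is no volumetric slack ($N$ tubes of length $\sim R$ exactly fill $R^3\sim N^{3/2}$), and your conflict probabilities do not close the argument: two independently routed edges are paths of length $\sim R$ (the endpoints are typically $\sim R$ apart, so no routing is ``short''), and the probability that two such independent paths come within distance $1$ is of order $1/R\sim N^{-1/2}$. With $\sim N^2$ non-adjacent pairs the expected number of conflicts is $\sim N^{3/2}$, so the first-moment argument fails, and since each edge participates in $\sim N$ pairs the dependency degree is $\sim N$ while $p\sim N^{-1/2}$, so the Lov\'asz local lemma condition $p\cdot D\lesssim 1$ fails as well. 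This is exactly why the construction is delicate: Kolmogorov and Barzdin place the vertices on the boundary sphere and build the edges one at a time as piecewise-linear curves depending on a few parameters, chosen by a counting argument to avoid all previously constructed edges. The paper's own randomized generalization (Theorem 2.1) confirms the difficulty: a purely random facewise-linear embedding needs $R\gtrsim N^{2}$ when $k=1$, $n=3$ (Proposition 2.1), and even the corrected two-scale argument with sequential conditional-probability bookkeeping only reaches $R\lesssim N^{1/2}(\log N)^{4}$, so the clean $C(d)N^{1/2}$ of the theorem is not obtainable from the randomization you describe.
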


This result and its proof contain several interesting geometric ideas.  The most important idea is the discovery
of expanders.  
Kolmogorov and Barzdin essentially observed that a random graph is an expander.  Then they proved
that expanders are hard to embed in Euclidean space.

In this paper, we generalize the work of Kolmogorov and Barzdin in various directions.  
We will study objects with some properties analogous to expanders, and we will
see that they are hard to embed in Euclidean space.  We have three main results.  
The first is a higher-dimensional version of Theorem 1, dealing with embeddings of k-dimensional
simplicial complexes into n-dimensional space for $n \ge 2k+1$.  
The next two results have to do with the
closed arithmetic hyperbolic manifolds - a class of manifolds with some expander-like properties.  The
second theorem deals with the difficulty of embedding an arithmetic hyperbolic manifold in Euclidean space.

The third result, which is probably the most interesting, deals with certain complicated knots. 
 We recall  that a knot $K$ has distortion at least $D$ if there are two points $x,y \in D$ with $dist_K (x,y) 
\ge D dist_{\mathbb{R}^3}(x,y)$.  (Here $dist_K(x,y)$ denotes the distance from $x$ to $y$ along
$K$ - the length of the shortest segment of $K$ from $x$ to $y$.)  
For a long time, it was an open problem whether there
are isotopy classes of knots requiring arbitrarily large distortion.  Recently in \cite{P}, Pardon gave
a lower bound for the distortion of torus knots.  In particular, he gave a sequence of torus
knots that require arbitrarily large distortion.  We give a second
proof that there are isotopy classes of knots requiring arbitrarily large distortion.  Our proof is
based on the geometry/topology of arithmetic hyperbolic 3-manifolds.  A closed arithmetic hyperbolic
3-manifold $M$ admits a degree 3 cover of $S^3$ ramified only over a knot $K(M)$.  The `expander'-like properties
of $M$ imply `expander'-like properties of $K(M)$, which we use to show that $K(M)$ has a large distortion.

Now we describe our results in more detail.

First we generalize the theorems of Kolmogorov and Barzdin to higher dimensions.  
We consider topological embeddings $X^k \rightarrow \mathbb{R}^n$ where $X$ is a k-dimensional
simplicial complex.  We say that an embedding has combinatorial thickness
at least $T$ if the distance between the images of any two non-adjacent simplices is at least $T$.  
By a general position argument, $X$ embeds in $\mathbb{R}^n$
for all $n \ge 2k+1$.  We make this more quantitative by estimating the thickness of such an
embedding.  When $k=1$ and $n=3$, our result recovers the Kolmogorov-Barzdin theorem up to 
small errors.

\begin{introtheorem} Suppose that $n \ge 2k+1$ and that each vertex of $X$ belongs to at most $L$ k-faces of $X$.  Suppose
that $X$ has $N$ vertices. Then $X$ embeds with thickness 1 into $B^n(R)$ for $R = c(n, L, \epsilon) 
N^{\frac{1}{n-k} + \epsilon}$.

On the other hand, for each $n \ge 2k+1$ and each $\epsilon > 0$, we can find a sequence of k-complexes
$X_i$ obeying the following estimates.  The complex $X_i$ has $N_i \rightarrow \infty$ simplices.  
Each vertex in any $X_i$ lies in at most $L(\epsilon)$ simplices
of $X_i$.  Here $L(\epsilon)$ is a constant depending on $\epsilon$ but uniform among all of the $X_i$.  
Finally, if we embed $X_i$ into $\mathbb{R}^n$ with combinatorial thickness 
$\ge 1$, then the 1-neighborhood of the image has volume at least $c(n, \epsilon) N_i^{\frac{n}{n-k} - \epsilon}$.
In particular, if $X_i$ is embedded with combinatorial thickness $\ge 1$ into an n-ball of radius $R$, then
$R \ge c(n, \epsilon) N_i^{\frac{1}{n-k} - \epsilon}$.

\end{introtheorem}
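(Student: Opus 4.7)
The plan is to set $R = c(n,L,\epsilon) N^{1/(n-k)+\epsilon}$, partition $B^n(R)$ into unit cubes, and place the $N$ vertices of $X$ into distinct cubes uniformly at random, realizing each $k$-face as the straight Euclidean $k$-simplex on the chosen vertex positions. The key probabilistic estimate is that, for a fixed unit cube $Q$, the expected number of straight $k$-simplices meeting $Q$ is at most $C L N \cdot R^k / R^n = C L N^{-\epsilon(n-k)}$, because a straight $k$-simplex of diameter $\sim R$ meets $\sim R^k$ of the $\sim R^n$ unit cubes. A Chernoff bound applied per cube together with a union bound over cubes shows that, with high probability, every unit cube is met by at most $M = M(n,L,\epsilon)$ different $k$-faces. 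Since $n \ge 2k+1$, any $M$ straight $k$-simplices through a single cube can be perturbed locally into general position at pairwise distance $\ge 1$ (two generic $k$-flats in $\mathbb{R}^n$ are disjoint as soon as $n>2k$); performing this perturbation cube by cube yields the desired thickness-$1$ embedding into $B^n(R)$.

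\textbf{Lower bound.} For the sequence $X_i$ I would take bounded-degree higher-dimensional expanders --- for example random Linial--Meshulam-type $k$-complexes or $k$-skeleta of Ramanujan complexes coming from finite quotients of Bruhat--Tits buildings --- the relevant property being \emph{cosystolic expansion} in codimension one, i.e.\ a linear isoperimetric inequality for $(k-1)$-cochains modulo coboundaries. Given a thickness-$1$ embedding of $X_i$ into $\mathbb{R}^n$ with $1$-neighborhood of $n$-volume $V$, I would choose a scale $r$ to be optimized, tile $\mathbb{R}^n$ by $r$-cubes, and let $X_B$ denote, for each cube $B$, the subcomplex of $k$-faces whose vertices all fall in $B$. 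Cosystolic expansion forces the relative $(k-1)$-boundary of any balanced $X_B$ to consist of at least $c|X_B|$ dual simplices; by the thickness-$1$ hypothesis each such boundary simplex contributes $\sim 1$ unit of $(n-1)$-area on $\partial B$. Summing over the tiling and then integrating out $r$, in the style of a Loomis--Whitney or waist inequality, gives $V \ge c(n,\epsilon) N_i^{n/(n-k)-\epsilon}$, which in particular forces $R \ge c N_i^{1/(n-k)-\epsilon}$ whenever the image fits in $B^n(R)$.

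\textbf{Main obstacle.} The difficulty concentrates in the lower bound, for two reasons. First, producing bounded-degree $k$-complexes with the correct cosystolic expansion is genuinely harder than producing graph expanders: the analog for $k \ge 2$ of ``a random $d$-regular graph is an expander'' requires either a careful random-complex analysis or arithmetic Ramanujan constructions, and the input theorems are substantially deeper than anything needed in the graph case. Second, the naive hyperplane-cutting obstruction (bisect the image, bound the cut area by $R^{n-1}$, apply $1$-dimensional expansion) only yields the weaker exponent $n/(n-1)$; recovering the sharp exponent $n/(n-k)$ requires exploiting the full $(k-1)$-cochain cosystolic inequality rather than ordinary graph expansion and combining it with an $(n-k)$-parameter family of slicings, choosing the slicing scale $r$ so that the $\epsilon$ loss is absorbed into the scale-matching step.
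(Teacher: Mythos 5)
Your upper bound is in the spirit of the paper's construction (random placement of vertices, then a local correction), but the step ``any $M$ straight $k$-simplices through a single cube can be perturbed locally into general position at pairwise distance $\ge 1$ \dots cube by cube'' is exactly where the real work lies, and as stated it does not go through. A straight $k$-simplex passes through many unit cubes, so a ``local'' perturbation inside one cube means bending it; then either your perturbation is supported in the cube interiors, in which case two faces that come close precisely on a shared cube boundary can never be separated, or it moves points on the boundaries, in which case the cube-by-cube corrections are not consistent with one another and may destroy separations already achieved in neighboring cubes. Moreover, demanding separation $1$ inside a cube of side $1$ is at the same scale as the cube, so even the single-cube claim needs care (you would at best get separation $\delta(n,M)$ and then rescale). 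The paper resolves precisely these issues with a two-scale general position argument: it first refines $X$ into a complex $X'$ whose simplices have unit diameter (so that bending is encoded by moving vertices of $X'$), and then chooses the unit-size displacements of the vertices of $X'$ \emph{sequentially}, tracking conditional probabilities of the bad events in the style of the Lov\'asz local lemma; this global bookkeeping is what replaces your ``cube by cube'' step, and it is also why the paper only needs radius $N^{1/(n-k)}$ times a power of $\log N$ rather than $N^{1/(n-k)+\epsilon}$.

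The lower bound has a more serious gap: you take as input bounded-degree higher-dimensional expanders (random Linial--Meshulam complexes or Ramanujan complexes with codimension-one cosystolic expansion), but random Linial--Meshulam complexes do not have bounded vertex degree (each vertex lies in polynomially many faces, violating the $L(\epsilon)$ hypothesis), and the existence of bounded-degree $k$-complexes with the needed expansion/overlap property is exactly the question the paper flags as open (``can we find $X_i$ with bounded local degree and width $\ge \alpha N_i$? \dots It is open.''). The paper deliberately sidesteps this: its examples are the $k$-skeleta $X^{k,D}(S)$ of $D$-dimensional cubical lattices, which are \emph{not} expanders but have combinatorial width over $\mathbb{R}^k$ at least $c(D,k)S^{D-k}\ge N^{1-\epsilon}$ for $D$ large; this is proved via the width-and-filling inequality of Gromov together with a Federer--Fleming pushout bound $Fill(j)\lesssim S$, and it is the source of the $N^{\epsilon}$ loss in the statement. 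The deduction of the volume bound is then not a cube-tiling/Loomis--Whitney argument but a direct slicing argument: Falconer's estimate produces parallel $(n-k)$-plane fibers of a projection $\mathbb{R}^n\to\mathbb{R}^k$ each meeting the $1$-neighborhood in $(n-k)$-area $\lesssim V^{(n-k)/n}$, the width forces one fiber to meet $\gtrsim W$ $k$-faces, and thickness $1$ plus bounded degree turns these into $\gtrsim W/L$ disjoint unit balls in that fiber, giving $V^{(n-k)/n}\gtrsim W/L$ and hence the exponent $n/(n-k)$. Your sketch neither supplies admissible examples nor a complete mechanism converting cochain expansion into the $n/(n-k)$ exponent; as you yourself note, that conversion is the main obstacle, and it is exactly what the paper's Proposition 2.5 together with the width-and-filling machinery accomplishes.
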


In order to generalize Kolmogorov and Barzdin's examples, one might hope to find ``higher-dimensional
expanders".  We will give this a precise meaning in Section 2.  It's not hard to check that if $X^k$ is a
a higher-dimensional expander with $N$ simplices, and $X$ is embedded in an n-ball of radius $R$, then
$R \ge c N^{\frac{1}{n-k}}$.
However, we don't know whether such higher-dimensional expanders exist.  The examples we
consider are the k-skeleta of high-dimensional cubical lattices.  These are not as good as expanders, so 
our lower bound has an extra $\epsilon$ that doesn't appear in the Kolmogorov-Barzdin theorem.

Our next theorem involves a different notion of the `thickness' of an embedding.
We say that an embedding $I: X \rightarrow \mathbb{R}^n$ has retraction
thickness at least $T$ if the $T$-neighborhood of $I(X)$ retracts to $I(X)$.  When we first formulated
it, we thought this definition was just a minor variation of the Kolmogorov-Barzdin definition of combinatorial
thickness.  But it turns out that retraction thickness has very different properties.  For example, an
expander graph with $N$ vertices can be embedded with retraction thickness 1 into a 3-ball of radius
$\sim N^{1/3}$, much smaller than for combinatorial thickness.  The reason for this is roughly that the
retraction thickness usually treats homotopic spaces the same, and an expander graph with $N$
vertices is homotopic to a planar graph with $N$ vertices.

We prove several results of the following flavor.  If $X$ is homotopically complicated and we embed
$X$ in $\mathbb{R}^n$ with retraction thickness at least 1, then the volume of the 1-neighborhood of
$I(X)$ should be big.  For example, we will prove that
the volume of the 1-neighborhood is $\gtrsim$ the sum of Betti numbers of $X$.  Our most interesting result in this direction involves arithmetic hyperbolic manifolds.

\begin{introtheorem} Let $X$ be a closed arithmetic hyperbolic k-manifold with volume $V_{hyp}$.  Suppose
$k \ge 3$.  If we embed
$X$ in $\mathbb{R}^n$ with retraction thickness 1, then the 1-neighborhood of the image has
volume at least $c(k,n) V_{hyp}^{\frac{n}{n-1}}$.

\end{introtheorem}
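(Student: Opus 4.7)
The plan is to follow the Kolmogorov--Barzdin strategy: slice $\mathbb{R}^n$ by parallel hyperplanes, bound each slice area from below using a Cheeger-type input, and integrate to bound the $n$-volume of the $1$-neighborhood $U := N_1(I(X))$. Write $V' := \mathrm{vol}(U)$ and let $I : X \to \mathbb{R}^n$ denote the embedding.

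The main analytic input is that arithmetic hyperbolic $k$-manifolds with $k \ge 3$ have Cheeger constant $h(X) \ge h_0(k) > 0$, uniformly in $V_{hyp}$, as a consequence of uniform Selberg-type spectral gap bounds for congruence arithmetic lattices. Hence any smooth hypersurface $\Sigma \subset X$ bisecting the hyperbolic volume satisfies $\mathrm{vol}^{\mathrm{hyp}}_{k-1}(\Sigma) \ge c\, V_{hyp}$. For each unit vector $v \in S^{n-1}$, consider the hyperplanes $H^v_t = \{x \cdot v = t\}$; the level sets $\Sigma^v_t := I^{-1}(H^v_t)$ sweep out $X$ as $t$ varies. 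By the intermediate value theorem, pick $t^\ast(v)$ so that $\Sigma^v := \Sigma^v_{t^\ast(v)}$ bisects $V_{hyp}$; the Cheeger bound then gives $\mathrm{vol}^{\mathrm{hyp}}_{k-1}(\Sigma^v) \ge c\, V_{hyp}$.

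The heart of the argument is transferring this into a Euclidean $(n-1)$-measure lower bound on the slice $U \cap H^v_{t^\ast(v)}$. This is the main obstacle: retraction thickness is a purely homotopical condition, and unlike the combinatorial thickness in Theorem 2, it gives no direct metric control on the embedding. I would triangulate $X$ by $\sim V_{hyp}$ simplices of bounded hyperbolic diameter (from a maximal $\epsilon$-net), so that the dual $1$-skeleton inherits the lower bound on the Cheeger constant. The retraction can then be used to ensure that each embedded simplex contributes a definite amount of Euclidean volume to $U$, and in particular that the simplices crossing $\Sigma^v$ each contribute at least a constant to the Euclidean $(n-1)$-measure of the slice, yielding
\[
\mathrm{vol}^{\mathrm{euc}}_{n-1}(U \cap H^v_{t^\ast(v)}) \ge c_1(k,n)\, V_{hyp}.
\]

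With this slice bound in every direction, the proof closes as in Kolmogorov--Barzdin. Since the bisecting slice in direction $v$ lies in an $(n-1)$-ball of radius at most $\mathrm{rad}(U)$, the slice-area bound forces $\mathrm{rad}(U) \ge c\, V_{hyp}^{1/(n-1)}$. Integrating the slice bound over the interval of $t$-values on which $\Sigma^v_t$ remains bisecting -- an interval whose length is comparable to $\mathrm{diam}(U)$ by a packing-type spread argument using the triangulation -- then yields $V' \ge c\, V_{hyp} \cdot \mathrm{diam}(U) \ge c(k,n)\, V_{hyp}^{n/(n-1)}$, as required.
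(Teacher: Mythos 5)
Your overall architecture (Cheeger constant of arithmetic manifolds, bisecting hyperplane, slice bound, integrate) is the right moral picture, but the step you yourself flag as ``the heart'' is a genuine gap, and as stated it is false. Retraction thickness gives no metric separation whatsoever between the simplices of a triangulation of $X$: it only says that the unit neighborhood of the image deformation retracts onto the image. Nothing prevents many (indeed, arbitrarily many) simplices of your $\sim V_{hyp}$-simplex triangulation from being crushed into a single unit ball of $\mathbb{R}^n$, so the claim that ``each embedded simplex contributes a definite amount of Euclidean volume to $U$'' and that each simplex crossing $\Sigma^v$ contributes a constant to the Euclidean $(n-1)$-measure of the slice has no justification -- this is exactly the difference between combinatorial and retraction thickness that the paper emphasizes (an expander with $N$ vertices embeds with retraction thickness $1$ in a ball of radius $\sim N^{1/3}$, precisely because its edges can be packed without any pairwise separation). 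A decisive sanity check: your argument never uses the hypothesis $k \ge 3$, yet it would apply verbatim to arithmetic hyperbolic surfaces, which also have uniformly bounded Cheeger constant; for those the conclusion fails, since a genus-$g$ surface embeds with retraction thickness $1$ into $B^n(R)$ with $R \sim g^{1/n}$ (Example 2 of Section 3.2), so the bisecting Euclidean slice of $U$ there has area $\lesssim 1$, not $\gtrsim V_{hyp}$. Any correct proof must exploit $k-1 \ge 2$ somewhere, and yours does not.

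For comparison, the paper's proof gets around exactly this obstacle differently: it covers the neighborhood by unit balls, passes to the nerve $N$ (through which the identity of $X$ factors up to homotopy, using the retraction), and uses the Falconer slicing estimate in the \emph{upper-bound} direction to show that the sub-nerve over each unit slab has $\lesssim V^{\frac{n-1}{n}}$ simplices. It then maps back to $X$ and applies Milnor--Thurston simplex straightening -- this is where $k \ge 3$ enters, since straight simplices of dimension $k-1 \ge 2$ have bounded hyperbolic volume -- to obtain chains $\bar X_j$ and null-homologous cycles $\bar Z_j$ of hyperbolic volume/area $\lesssim V^{\frac{n-1}{n}}$; the Cheeger constant is used as a filling inequality to cap off the $\bar Z_j$ by $k$-chains of volume $\lesssim h^{-1}V^{\frac{n-1}{n}}$, and a mod $2$ decomposition of the fundamental class into the resulting cycles forces one of them to have volume $\ge V_{hyp}$, giving $V_{hyp} \lesssim h^{-1} V^{\frac{n-1}{n}}$. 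Your final step of integrating a slice lower bound over an interval of bisecting $t$-values is also shaky (that set can be a single point, and the ``packing-type spread argument'' again presupposes metric control the hypothesis does not give), but the fatal issue is the transfer step: you would need to either work with the nerve and all slabs simultaneously as the paper does, or find some other mechanism converting homotopical thickness into quantitative slice area, before the rest of the plan can run.
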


This theorem also involves expanders in a sense.  The key property of arithmetic hyperbolic manifolds
is that they obey an expander-type isoperimetric inequality.  In particular, they come with natural
triangulations and their 1-skeleta are expander graphs.  This expander property is exploited in the proof.
A non-arithmetic closed hyperbolic 3-manifold with volume $V_{hyp}$ obeys a weaker estimate: the 1-neighborhood
of the image has volume $\gtrsim V_{hyp}$.

It's not clear how sharp this theorem is.  For example, suppose that we embed our arithmetic
hyperbolic 3-manifold $X^3$ into $B^7(R)$ with retraction thickness at least $1$.  Our theorem shows that 
$R \gtrsim V_{hyp}^{1/6}$.  On the other hand, we will construct such embeddings with
$R \sim V_{hyp}^{1/4 + \epsilon}$.  We don't know where the optimal value of $R$ lies
within this range.

Our last result concerns the geometric properties of complicated knots.  A knot is an embedding
from $X = S^1$ into $\mathbb{R}^3$.  In this case, the space $X$ is simple, but we consider complicated
isotopy classes of embeddings.  The knots we study are built using arithmetic hyperbolic
3-manifolds.
To define these knots, we recall the following theorem of Hilden and Montesinos (\cite{H} and \cite{M}).

\newtheorem*{hmthm}{Ramified cover theorem}

\begin{hmthm} (Hilden, Montesinos) Any closed oriented 3-manifold $M$ admits a degree 3 map
$M \rightarrow S^3$ which is a ramified cover ramified over a knot $K = K(M)$.
\end{hmthm}

Because $M$ is very complicated topologically, it turns out that $K(M)$ must be complicated geometrically.
In particular, we prove that $K(M)$ has a large distortion.

\begin{introtheorem} Let $M$ be a closed arithmetic hyperbolic 3-manifold with volume $V$.
Suppose that $M$ is a 3-fold cover of $S^3$ ramified over a knot $K(M)$.  Then $K(M)$ has distortion at least $c V$.
(And so does any knot isotopic to $K(M)$.)

\end{introtheorem}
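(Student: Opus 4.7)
The plan is to suppose $K=K(M)$ has distortion at most $D$ and deduce $D\ge cV$. Let $q:M\to S^3$ denote the Hilden--Montesinos branched cover. The strategy is to use small distortion to exhibit a round $2$-sphere $S\subset\mathbb{R}^3$ whose preimage $\Sigma=q^{-1}(S)\subset M$ has small $|\chi(\Sigma)|$ and separates $M$ into two pieces of balanced hyperbolic volume, and then to apply the expander property of $M$ to force $|\chi(\Sigma)|$ to be at least of order $V$, contradicting the bound coming from the distortion.

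\textbf{Step 1 (few-intersection spheres).} Normalise $K$ to have length $L$. For $p\in K$ and $r>0$, the distortion bound says that $K\cap B(p,r)$ lies in an arc of $K$ of length at most $2Dr$; in particular $B(p,L/(4D))\cap K$ is a short arc, while $B(p,L)\supset K$. The coarea bound $\int_0^\infty |K\cap \partial B(p,r)|\,dr\le L$, together with a pigeonhole argument over $r\in [L/(4D),L]$, produces a radius $r(p)$ with $|K\cap \partial B(p,r(p))|\le CD$ and with $B(p,r(p))\cap K$ containing a substantial fraction of $K$. Letting $p$ vary along $K$ and $r$ vary nearby gives a $2$-parameter family of candidate separating spheres $S_{p,r}=\partial B(p,r)$.

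\textbf{Step 2 (lift and balance).} In the Hilden--Montesinos cover, meridians of $K$ map to transpositions in $S_{3}$, so near each transverse intersection of $S$ with $K$ the cover $q$ has ramification index $2$ on one preimage sheet and $1$ on the other. Riemann--Hurwitz then gives $\chi(\Sigma)=6-n$ for $n=|S\cap K|$, hence $|\chi(\Sigma)|\le n\le CD+O(1)$. The embedded surface $\Sigma$ separates $M$ into $q^{-1}(B_+)$ and $q^{-1}(B_-)$. To arrange that this split is balanced \emph{in hyperbolic volume} (not merely in Euclidean length of $K$, which is all Step 1 provides), combine an intermediate-value argument with an averaging bound on the $2$-parameter family $\{S_{p,r}\}$: as $(p,r)$ varies, the hyperbolic volume of $q^{-1}(B_+)$ depends continuously on the parameters and takes all values in $[0,V]$, so some member of the family gives volume exactly $V/2$, while an averaged coarea bound over the family keeps the intersection count $|S\cap K|\le C'D$.

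\textbf{Step 3 (expander bound).} By Selberg/Jacquet--Langlands, an arithmetic hyperbolic $3$-manifold satisfies $\lambda_{1}\ge c_0>0$, so by Cheeger's inequality its Cheeger constant is bounded below by some $h_0>0$ independent of $V$. Therefore any balanced-separating surface in $M$ has area at least $h_0V/2$. To pass from this area lower bound to an $|\chi|$ lower bound on $\Sigma$, compress $\Sigma$ along any compressing disks to obtain an incompressible $\Sigma'$ with $|\chi(\Sigma')|\le|\chi(\Sigma)|$ that still gives a balanced separation, and take an area-minimising representative $\Sigma'_{\min}$ in its isotopy class. Since $M$ has sectional curvature $\equiv-1$, the Gauss equation forces the intrinsic curvature of $\Sigma'_{\min}$ to be $\le-1$, so Gauss--Bonnet yields $\mathrm{Area}(\Sigma'_{\min})\le-2\pi\chi(\Sigma')\le 2\pi C'D$. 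Combining with the expander lower bound, $h_0V/2\le 2\pi C'D$, whence $D\ge cV$.

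\textbf{Main obstacle.} The crux is Steps 2--3, where two compatible properties must hold \emph{simultaneously}: a single sphere in the family must give both the low-intersection bound (hence the low-$|\chi|$ bound on $\Sigma$) and a balanced split of the hyperbolic volume, and that balance must survive the compression step. The first point almost certainly requires working with a genuinely $2$-parameter sweep of spheres and a uniform averaging bound, rather than a naive $1$-parameter intermediate-value argument. The second most naturally calls for either a multi-surface or min--max version of the Cheeger inequality on $M$, or an a priori choice of $\Sigma$ as an incompressible surface via minimisation of complexity within its isotopy or homology class.
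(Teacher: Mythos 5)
Your Steps~1 and your Riemann--Hurwitz computation in Step~2 are fine ($\chi(\Sigma)=6-n$ for a transverse sphere meeting $K$ in $n$ points is correct for a $3$-fold cover branched with transposition monodromy), but the crux you flag as the ``main obstacle'' is a genuine gap, not a technicality, and as proposed the argument does not close. The averaging/coarea bound controls $|S_{p,r}\cap K|$ only for a \emph{typical} parameter, while the balanced split of hyperbolic volume is achieved at one \emph{specific} parameter, and nothing ties the two together: the branched covering $q$ is a purely topological map with no metric control, so the hyperbolic volume of $q^{-1}(B(p,r))$ can jump from nearly $0$ to nearly $V$ as $r$ crosses an arbitrarily short interval --- exactly an interval of radii where the sphere may meet $K$ very many times. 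So ``some member of the family gives volume $V/2$ while the averaged bound keeps $|S\cap K|\le C'D$'' is not a deduction; it is the statement that needs proof, and a one- or two-parameter intermediate-value sweep cannot supply it. Step~3 has further problems even granting Step~2: $M$ is irreducible but need not be Haken, so compressing $\Sigma$ may leave only $2$-sphere components (each bounding a topological ball that can carry half the volume), for which there is no negatively curved minimal representative and Gauss--Bonnet gives nothing; and even in the genus $\ge 2$ case, the area-minimizing surface is only \emph{isotopic} to $\Sigma'$, and the isotopy can destroy the balanced separation, so the Cheeger lower bound no longer applies to the surface whose area you bounded above.

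The paper circumvents precisely this ``simultaneity'' problem by never seeking a single balanced cut. It first replaces distortion by conformal length ($\mathrm{convol}_1(K)\le 4\,\mathrm{distor}(K)$), then uses an integral-geometry averaging at \emph{every} scale to build a tree of nested rectangular blocks each of whose faces meets $K$ in $\lesssim \mathrm{convol}_1(K)$ points, hence a triangulation of $S^3$ containing $K$ in its $1$-skeleton together with a non-degenerate simplicial map to a tree whose every edge-preimage has $\lesssim \mathrm{convol}_1(K)$ simplices. This lifts through the branched cover to $M$, the simplices are straightened in the hyperbolic metric, and the fundamental class is decomposed into cycles each of volume $\lesssim h^{-1}\,\mathrm{convol}_1(K)$, using the Cheeger constant only to fill the null-homologous $2$-cycles sitting over vertices of the tree; since one of these $3$-cycles must be homologically nontrivial, it has volume $\ge V$, giving $hV\lesssim \mathrm{convol}_1(K)\lesssim \mathrm{distor}(K)$. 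No balancing, intermediate-value argument, incompressibility, or minimal-surface theory is needed. If you want to salvage your route, you would need something like a min--max or sweepout version of the Cheeger inequality compatible with the intersection-count bound at every moment of the sweep, which is essentially a harder theorem than the multi-scale decomposition the paper uses.
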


\noindent In particular, we see that there are isotopy classes of knots requiring arbitrarily large distortion.

There are many different ways to measure the ``geometric complexity" of an embedding.  We consider
several different measures of complexity, related to combinatorial thickness, retraction thickness, and
distortion.  Each of these ways of measuring geometric complexity leads to different kinds of estimates.
There are many open questions along these lines, and some of them are indicated in the paper at
the end of each section.

In the first section of the paper, we discuss the work of Kolmogorov and Barzdin.
In the second section of the paper, we discuss embeddings of higher dimensional complexes
and prove Theorem 2.  In the third section, we discuss embeddings of homotopically
complicated spaces and prove Theorem 3.  In the fourth section, we discuss the distortion of knots
and prove Theorem 4.  The different sections are mostly independent.  In an appendix
we review some fundamental facts about the geometry and topology of arithmetic hyperbolic manifolds.  
In particular, we emphasize the expander-type properties of these manifolds and the connections
between expanders and topology.

Acknowledgements.  Over the course of working on the paper, the second author was supported by NSERC, by NSF grant DMS-0635607, and by the Monell
foundation.  

\section{Embedding networks in Euclidean space}

In this section, we discuss the paper ``On the realization of networks in three-dimensional space"
by Kolmogorov and Barzdin.  In Section 1.1, we summarize the main ideas of the paper.  We give
detailed proofs of any results that we need later on.  In Section 1.2, we make some historical comments.

\subsection{Embedding graphs in $\mathbb{R}^3$}

Suppose that $\Gamma$ is a graph, and $I: \Gamma \rightarrow \mathbb{R}^3$ is a topological embedding of $\Gamma$
in three-dimensional Euclidean space.  We think of $\Gamma$ as a simplicial complex of dimension 1.
We say that the embedding $I$ has combinatorial thickness at least
$T$ if $dist[ I(\Delta_1), I(\Delta_2) ] \ge T$ whenever $\Delta_1$ and $\Delta_2$ are non-adjacent simplices
of $\Gamma$.  (Here $\Delta_i$ is either a vertex of $\Gamma$ or an edge of $\Gamma$.  So we see that
the distance between any two distinct vertices is at least $T$, and the distance between any two edges that
don't share a vertex is at least $T$, and the distance between an edge and a vertex not in the edge is
at least $T$.)

For a first perspective, suppose that the graph $\Gamma$ is an $N^{1/2} \times N^{1/2}$ grid.
It is easy to embed $\Gamma$ with combinatorial thickness 2 into a block of dimensions $10 \times 10 N^{1/2}
\times 10 N^{1/2}$.  Moreover, we can assume that the distance from $\Gamma$ to the edge of the block is
at least 2.  This block can in turn be folded up to fit in a ball.  More precisely, there is an embedding $\Psi$
from the block into a ball of radius $100 N^{1/3}$ which is locally 2-bilipschitz.  This means that $\Psi$
stretches vectors by at most a factor of 2 and contracts vectors by at most a factor of 2.  In particular, the image
$\Psi(\Gamma)$ has combinatorial thickness at least 1.  So we see that a grid admits an embedding of thickness
1 into a ball of radius only $\sim N^{1/3}$.  But it turns out that other graphs of bounded degree are much harder
to embed than grids.  These other graphs are expanders, and we discuss them more below.

In \cite{KB}, Kolmogorov and Barzdin proved that random graphs are the hardest graphs to embed and gave a sharp
estimate for the thickness of embedded graphs.  They essentially proved Theorem 1 from the introduction.  For 
convenience, we copy the statement here.

\begin{theorem} If $\Gamma$ is a graph of degree at most $d$ with $N$ vertices, then $\Gamma$ may be embedded 
with thickness 1 into a 3-dimensional Euclidean ball of radius $R \le C(d) N^{1/2}$.

On the other hand, let $\Gamma$ be a random bipartite graph of degree $6$ with $N$ vertices.  With high probability 
(tending to 1), there is no embedding of thickness 1 from $\Gamma$ into a ball of radius
$c N^{1/2}$. Moreover, if we embed $\Gamma$ into $\mathbb{R}^3$ with thickness 1, then 
the volume of the 1-neighborhood of the image is at least $c N^{3/2}$.
\end{theorem}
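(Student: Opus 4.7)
The plan is to split the theorem into its upper and lower bound halves. For the upper bound, I would use a probabilistic construction: place the $N$ vertices independently and uniformly at random in a ball $B(R)$ with $R = C(d) N^{1/2}$, then realize each edge by a straight segment. Once $C(d)$ is large enough, with positive probability all pairwise vertex distances exceed a fixed constant; and for any two specific non-adjacent edges the probability that they come within distance $10$ is $O(1/R^{2}) = O(1/N)$ (a segment of expected length $\sim R$ probes a slab of volume $\sim R$ in $B(R)$, which has volume $\sim R^{3}$). Summed over the $O(d^{2} N^{2})$ pairs of non-adjacent edges, the expected number of conflicts is $O(d^{2} N)$, a bounded fraction of the $O(dN)$ edges. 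Bad edges are then rerouted one at a time through short detours in the ample free space (edge density in $B(R)$ is only $\sim 1/R$), after which rescaling by a constant yields thickness $1$ in a ball of radius $\sim N^{1/2}$.

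For the lower bound I would combine two ingredients. First, a random bipartite $6$-regular graph on $2N$ vertices is, with high probability, an edge expander: every vertex subset $S$ of size at most $N/2$ on one side has $|\partial S| \ge \epsilon |S|$ for some absolute $\epsilon > 0$. This is a standard first-moment computation on the configuration model, where the expected number of non-expanding subsets of size $s$ decays exponentially in $s$. Second, there is a geometric slicing bound: if $I(\Gamma)$ has combinatorial thickness $1$ and a plane $\Pi$ meets $I(\Gamma)$ transversally in $k$ points, then those $k$ points in $\Pi$ are pairwise $1$-separated (thickness applied to the $k$ non-adjacent edges that carry them), so $\mathrm{area}(\Pi \cap N_{1}(I(\Gamma))) \gtrsim k$. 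Combining these, one sweeps parallel planes $\Pi_{t}$ across $B(R)$ to find, by the intermediate value theorem, a $\Pi_{t_{0}}$ that splits the vertices into pieces of sizes in $[N/3,\, 2N/3]$; expansion forces at least $\epsilon N/3$ edges across $\Pi_{t_{0}}$ while $\mathrm{area}(\Pi_{t_{0}} \cap B(R)) \lesssim R^{2}$ forces at most $C R^{2}$, yielding $R \ge c N^{1/2}$. For the volume statement, replace $B(R)$ by the $1$-neighborhood $U$ of volume $V$: averaging over slicing directions in $S^{2}$, a Loomis--Whitney / coarea estimate furnishes a bisecting plane whose intersection with $U$ has area $\lesssim V^{2/3}$, and expansion then gives $V^{2/3} \gtrsim N$, i.e.\ $V \gtrsim N^{3/2}$.

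The subtlest step, I expect, is the rerouting phase of the upper bound: a purely random straight-line drawing will generically fail to have any positive thickness at all, so detours must be inserted without creating a cascade of new conflicts. I would handle this either by drawing each edge from a random family of $\sim R$ parallel candidate paths (so that conflicts concentrate on a small subset of edges which can then be fixed greedily), or equivalently by placing vertices on a fine sub-grid and routing through random free lattice paths, with a second-moment argument to control congestion. On the lower bound side, the delicate step for the volume statement is producing a cutting plane that simultaneously has small area inside $U$ and bisects the vertex set; this requires averaging over both position and direction and is best organized through integral geometry.
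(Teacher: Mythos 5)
The probability estimate driving your upper bound is wrong, and it is the crux of that half. For two straight segments of length $\sim R$ with independent uniformly random endpoints in $B(R)\subset\mathbb{R}^3$, the event that they come within a fixed distance $T$ is a codimension-one condition on the $12$-parameter configuration (forcing two segments in $\mathbb{R}^3$ to meet is one more equation than unknowns), so its probability is $\sim T/R \sim N^{-1/2}$, not $O(1/R^{2})$; your heuristic computes the chance that a random \emph{point} lands in the tube around the first segment, whereas the second object is itself a long segment sweeping across the ball. This is exactly the exponent $(T/R)^{n-d-d'}$ with $n=3$, $d=d'=1$ in Proposition 2.1 of the paper, which shows a random straight-line drawing is only about $RN^{-2}$-thick. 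Consequently the expected number of conflicting non-adjacent edge pairs is $\sim N^{2}\cdot N^{-1/2}=N^{3/2}$, not $O(d^{2}N)$: essentially every edge is involved in roughly $N^{1/2}$ conflicts, so the premise ``only a bounded fraction of edges are bad, fix them greedily'' collapses. Your fallback of $\sim R$ parallel candidate routes per edge is also quantitatively insufficient, since each edge must dodge $\sim dN\gg R$ previously placed edges; and even the vertex-separation claim fails as stated, since the expected number of vertex pairs at distance $\le 1$ is $\sim N^{2}/R^{3}\sim N^{1/2}$. What is actually needed --- and what Kolmogorov--Barzdin do, as the paper recounts --- is to place the vertices on the boundary sphere and route each edge as a piecewise-linear curve depending on a couple of parameters, so that each edge has on the order of $N$ well-separated candidate routes of which each previously placed edge blocks only boundedly many; a counting argument then chooses the routes one edge at a time. (Alternatively, the paper's two-scale general-position argument in Section 2.1 gives $N^{1/2}$ up to polylogarithmic factors.)

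Your lower bound is essentially the paper's argument: random bipartite graphs are expanders, a plane splitting the vertices must be crossed by $\gtrsim N$ edges, one passes to $\gtrsim N$ pairwise non-adjacent crossing edges using the degree bound, thickness makes their crossing points $1$-separated, and hence $N\lesssim R^{2}$, respectively $N\lesssim \mathrm{area}(\Pi\cap U)$. The soft spot is the volume statement: ``averaging over both position and direction'' does not by itself produce a plane that simultaneously splits the vertex set and has slice area $\lesssim V^{2/3}$, because for each direction the admissible offsets may form a tiny interval on which all slices are large, and Loomis--Whitney says nothing about this. The paper invokes Falconer's slicing theorem, which is genuinely nontrivial (one direction in which \emph{every} parallel slice has area $\lesssim V^{2/3}$), and then picks the bisecting slice in that direction. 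If you prefer an elementary substitute, fix a centerpoint $x_{0}$ of the vertex set (every half-space containing $x_{0}$ contains $\ge N/4$ vertices) and average the slice area only over planes through $x_{0}$: the average equals $c\int_{U}|x-x_{0}|^{-1}dx\lesssim V^{2/3}$, and any plane through $x_{0}$ gives at worst a $1/4$--$3/4$ split, which is enough for the expander step. Either way this step needs a real argument, not just a coarea appeal.
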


In the first part of their paper, Kolmogorov and Barzdin construct thick embeddings from $\Gamma$ to
$B(R)$, $R= C(d) N^{1/2}$.  The vertices of their graph are mapped to the
boundary of the ball.  According to Arnold's reminiscences of Kolmogorov, the original motivation for the
research was the study of the brain \cite{A}.  Kolmogorov had heard that all neurons are located at the edge
of the brain and that the large center of the brain consists of axons between the neurons.  The embedding
they construct has the same architecture: the vertices lie on the edge of the ball, and the edges fill up
the inside of the ball.  After placing the vertices evenly around the boundary of the ball, in an arbitrary order,
they construct the edges of the graph one at a time.  Each edge is a piecewise linear curve depending on 
a couple of parameters.  By a counting argument, they show that at each step they can choose these parameters
to avoid all of the previous edges.

They prove that this construction is sharp for random graphs.  To do this, they prove that random
graphs are expanders with high probability.  Recall that a graph $\Gamma$ is called an $\alpha$-expander
if it obeys the following isoperimetric inequality: if $A$ is a subset of the vertices of $\Gamma$ with at most
half of the vertices, then the number of edges from $A$ to the complement of $A$ is at least $\alpha |A|$.
There exists a constant $\alpha > 0$ so that a random bipartite graph of degree 6 
with $N$ vertices is an $\alpha$-expander with probability tending to 1 as $N$ goes to infinity.  (Kolmogorov
and Barzdin proved something very similar to this - see Section 1.2 for more information.  The expander properties
of random graphs are discussed in detail in \cite{HLW}.)

Suppose that $\Gamma$ is an $\alpha$-expander with degree 6 and $N$ vertices, embedded with thickness 1 
into a ball of radius $R$.  Slice the ball into
horizontal planes $z = h$.  By choosing $h$, we can arrange that at least half of the points lie in the region $\{ z \ge h\}$
and at least half lie in $\{ z \le h \}$.  Let $V^+$ denote the vertices in $\{ z \ge h \}$ and $V^-$ the vertices
in $\{ z \le h \}$.  By the expander property, there are $\ge (1/2) \alpha N$ edges of the graph from $V^+$ to $V^-$, and each
of these edge must intersect the plane $z = h$.  Any edge shares a vertex with $\le 2 d$ other edges, and so
we can choose $\gtrsim N$ edges of $\Gamma$ passing through $\{ z = h\}$ with no two edges sharing
a vertex.  By definition of combinatorial thickness, the unit neighborhoods around these edges are
disjoint.  Let $D$ denote the disk given by intersecting the ball $B^3(R)$ with the plane $\{ z = h \}$.  It
has radius at most $R$, yet it contains $\gtrsim N$ points pairwise separated by a distance of at least 1.
So we see that $N \lesssim R^2$, and so the radius $R$ is $\gtrsim N^{1/2}$.  Therefore, the radius
in the construction of Kolmogorov-Barzdin cannot be improved by more than a constant factor.

Next we turn to the volume of the 1-neighborhorhood of $I(\Gamma)$.
We write $U := N_1( I(\Gamma) )$ to denote the radius 1
neighborhood of the image $I(\Gamma)$.  If the embedding $I$ has thickness 1, we need to prove
that the volume of $U$ is $\gtrsim N^{3/2}$.  To do this, we will again find a plane that bisects 
the vertices of $\Gamma$.   We find the plane using the Falconer slicing
inequality \cite{F}.

\newtheorem*{falc}{Falconer Slicing Estimate}

\begin{falc} Let $U$ be an open set in $\mathbb{R}^n$.  If $k > n/2$, then we can slice $U$ with parallel $k$-planes so that the intersection of $U$ with each plane has $k$-area at most $ C_n Vol[U]^{k/n} . $
\end{falc}

In our case, $n=3$ and $k = 2 > n/2$, and so we can slice $U$ into parallel planes so that each plane intersects $U$ in area $\lesssim Vol(U)^{2/3}$.
One of the planes bisects the vertices of $\Gamma$, and we can conclude by the argument above that
$N \lesssim Vol(U)^{2/3}$.  

For context, we briefly discuss the values of $k$ and $n$ in Falconer's inequality.
The restriction on $k$ in Falconer's estimate is not well-understood.  If $k=1$, the theorem is definitely false because of a construction of Besicovitch.  
For $2 \le k \le n/2$, I believe it's an open problem whether the theorem still holds.  In all of our applications, we will have $k > n/2$.
Nevertheless, for smaller values of $k$, $U$ may be swept out by curved k-dimensional surfaces of small area as shown in \cite{Gu}.

\subsection{Historical comments}

Expander graphs are an important object in mathematics.  At the first or second encounter, it seems
very surprising that they exist.  They have applications in many areas, including computer science,
geometry, and number theory.  The essay \cite{HLW} gives a thorough expository account of this area.  Expander
graphs also have some applications in topology which we discuss more in the appendix.

The first proof that expanders exist is usually cited as Pinsker's proof, published in 1973 \cite{Pi}.
But the paper \cite{KB} of Kolmogorov and Barzdin also essentially proves that a random graph
is an expander.  It was published in 1967.

Kolmogorov began to work on the graph embedding project in the early 1960's.  He did the graph
embedding construction described above.  Then he gave an example where an embedding of thickness
1 required a ball of volume $N^{3/2} / log N$.  We don't know what example Kolmogorov was using.  
Later, Barzdin joined the project and proved the sharper estimate $N^{3/2}$.
  
Strictly speaking, Kolmogorov and Barzdin studied directed graphs where the number of incoming edges at each
vertex was always $d$ (say $d=2$), but the number of outgoing edges could vary.  If we consider these
graphs as undirected graphs, the degree is not actually bounded.  Random directed graphs are easy to define.  Each
vertex has $d$ incoming edges, and each of these edges is assigned a starting vertex uniformly at random.  (It looks
plausible that Kolmogorov and Barzdin work with directed graphs because it is easier to define a random
directed graph.)  A random directed graph with $d=2$ and $N$ vertices 
typically has a few vertices with
something like $\log N$ outgoing edges.  If we look at them as undirected graphs, they typically don't have bounded degree.
Still, the results of Kolmogorov and Barzdim come very close to producing expander graphs of uniformly bounded
degree.  For example, their graphs do have bounded average degree.  Probably one can build bounded-degree expanders
by a short additional argument that prunes the high-degree vertices and a few other vertices from these graphs.

Barzdin's recollections of the paper are interesting.  These appear in the notes section of the collected works of
Kolmogorov \cite{B}.  Kolmogorov credits Barzdin with the ``very idea of setting the question of almost all
graphs".  It sounds like Barzdin had the idea of looking at random graphs and proving that most of them are expanders.
Barzdin writes modestly, ``In fact I only gave new proofs (and somewhat generalized) theorems obtained by Andrei Nikolayevich
earlier, so that my achievements here are not very important."  

On the origins of the paper, Barzdin writes, ``Unfortunately, I don't remember what was the occasion or event at which 
Andrei Nikolayevich first mentioned these results (I was not present there).  I know only that the topic discussed
there was the explanation of the fact that the brain (for example that of a human being) is so constituted that 
most of its mass is occupied by nerve fibers (axons) while the neurons are only disposed on its surface.  The
construction of Theorem 1 precisely confirms the optimality (in the sense of volume) of such a disposition of the
neuron network."

The paper \cite{KB} has possible connections with other parts of science.  Kolmogorov and Barzdin write, ``Examples
of such networks are logical networks and neuron networks.  It is precisely for this reason that the question of 
constructing such networks in ordinary three-dimensional space under the condition that the vertices are balls while
the edges are tubes of a certain positive diameter is of importance."  We don't know whether connections with
neuroscience or computer science have been pursued.  On the neuroscience side, to what extent is it true
that the neurons of the brain lie on the boundary of a ball with the axons going through the interior of the ball?  To what extent is
the `graph' of the brain an expander?  Another possible connection is with the design of computer chips.  In his essay \cite{A},
Arnold writes, ``It is interesting to note that this paper \cite{KB} has remained little known even to specialists, perhaps
because the mathematical exposition is too serious.  When I mentioned it in a paper in {\it Physics Today} dedicated
to Kolmogorov (October 1989), I received a sudden deluge of letters
from American engineers who were apparently working in miniaturization of computers, with requests for a precise
reference to his work."  How do engineers design
the 3-dimensional structure of computer chips?  Are the underlying graphs expanders?

There is a substantial literature on the geometry of computer chips, which I know little about.  Paule Beame pointed
me to the paper \cite{BK} by Brent and Kung 
which studies the geometry of a computer chip which multiplies two n-digit binary numbers.  Brent and Kung give sharp
estimates involving the area and running time of the computer chip.  

(The computer circuits that Brent and Kung study are laid
out in planar domains with $\lesssim 1$ wires crossing at any point in the planar domain.  
Nearly planar arrangements of wires are not good for embedding expanders in space.
If an expander graph $\Gamma$ is embedded with thickness 1 into a rectangular box with
dimensions $ A \times B \times C$, $ A \le B \le C$, then the number of vertices of $\Gamma$ is 
$\lesssim AB$ - it is bounded by the smallest cross-sectional area of the box.  In particular, if $\Gamma$
has $N$ vertices, then it may embed into a box of dimensions $10 \times L \times L$ only if $L \gtrsim N$,
and only if the box has volume $\gtrsim N^2$.  Brent and Kung explain the nearly planar arrangement of
computer circuits: ``Because of heat-dissipation, packing, and testing requirements, a two-dimensional planar
model is reasonable." )

\section{Embeddings of higher-dimensional complexes}

In this section, we generalize the upper and lower bounds of Kolmogorov-Barzdin to embeddings from k-dimensional
simplicial complexes into $\mathbb{R}^n$ for $ n \ge 2k + 1$.

Suppose that $X$ is a simplicial complex and $I: X \rightarrow \mathbb{R}^n$ is an embedding.  We say
that $I$ has combinatorial thickness $\ge T$ if the distance between $I(\Delta_1)$ and $I(\Delta_2)$
is at least $T$ for any two non-adjacent simplices of $X$.  These simplices may have any dimension - they
don't need to be top-dimensional.  Two simplices are non-adjacent if they share no vertices, or equivalently
if their closures are disjoint.

We prove if $n \ge 2k+1$, then any simplicial complex $X^k$ with $N$ simplices and bounded local geometry can be embedded
with combinatorial thickness 1 into an n-ball of radius $\lesssim N^{\frac{1}{n-k} + \epsilon}$, and we prove that this estimate
is sharp up to factors of $N^\epsilon$.  In Section 2.1, we construct the embeddings.  In
Section 2.2, we give examples when the estimate is nearly sharp.  In Section 2.3, we discuss
open problems.

\subsection{Constructing embeddings}

Suppose that $X$ is a simplicial complex of dimension $k$.  For $n \ge 2k+1$, the complex embeds in $\mathbb{R}^{n}$ by 
general position arguments.  We will estimate the radius of a ball $B^n(R)$ so that $X$ embeds with combinatorial
thickness 1 into $B^n(R)$.  Our proof is a quantitative version of the standard general position argument.  
But there is one subtlety: we need to do a quantitative general position argument at
two different scales.

For $k=1$, $n=3$, Kolmogorov and Barzdin proved that a graph with degree $\le d$ admits an embedding with thickness 1 into a ball of radius $R \sim C(d) N^{1/2}$.  Their method generalizes immediately to $k=1$ and any $n \ge 3$.  In higher dimensions, the radius goes like $R \sim C(d) N^{\frac{1}{n-1}}$.  This exponent is sharp for all $n \ge 3$ when $X$ is an expander.  

In higher dimensions, the right generalization appears to be the following:

\newtheorem*{conjecture}{Conjecture}
\begin{conjecture} Suppose that $X$ is a k-dimensional simplicial complex with $N$ simplices, and that each vertex lies in $\le L$
simplices.  Suppose that $n \ge 2k+1$.  Then there is
a 1-thick embedding from $X$ into the n-dimensional ball of radius

$$ R \le C(n,L)  N^{\frac{1}{n-k}} . $$

\end{conjecture}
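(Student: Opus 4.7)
The plan is to give a quantitative version of the standard general-position argument, following Kolmogorov and Barzdin's strategy of building the embedding one simplex at a time. The construction proceeds skeleton-by-skeleton, adding simplices of each dimension $j = 0, 1, \ldots, k$ in turn.

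First, place the $N$ vertices at points of a cubic lattice of spacing $2$ inside $B^n(R)$; since $R \gtrsim N^{1/n} \le N^{1/(n-k)}$, this requires far less room than the target. Next, for each $j \le k$, assume inductively that the $(j-1)$-skeleton has been embedded with combinatorial thickness $1$, and insert the $j$-simplices one at a time. For a new $j$-simplex $\sigma$ whose boundary $\partial\sigma$ is already in place, I need to find a PL $j$-disk in $B^n(R)$ spanning $\partial\sigma$ that is at distance $\ge 1$ from every non-adjacent simplex already present.

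The routing is handled by a parameterized family of candidate disks. Generalizing the Kolmogorov-Barzdin one-kink PL edge construction, I would parameterize fillings of $\partial\sigma$ by the position of an interior ``control point'' $p \in B^n(R)$, taking the disk to be the cone from $\partial\sigma$ to $p$ (or, if needed, by a tuple of several control points for more flexibility). For each previously-placed simplex $\tau$ of dimension $j' \le k$, the $1$-neighborhood $N_1(\tau)$ has volume $\lesssim R^k$ inside $B^n(R)$. A Fubini / shadow-of-the-obstacle computation bounds the measure of control-point positions $p$ for which the candidate disk meets $N_1(\tau)$ by a fraction of order $R^k / R^{n-k}$ of the total configuration space. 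A union bound over the $\le N$ obstacles makes the total bad fraction $\lesssim N / R^{n-k}$, which is $< 1$ as soon as $R \gg N^{1/(n-k)}$, producing a valid placement.

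A subtlety, which is the reason the section introduction speaks of ``two scales'', is that the above argument only guarantees large-scale separation from obstacles. The new $j$-disk must also have combinatorial thickness $1$ from the simplices sharing a vertex with $\sigma$ (which cannot be pushed away) and from itself, since its own pieces may come together near the control point. I would handle this with a local general-position argument, perturbing the control point within a unit ball and using the bounded-degree hypothesis that each vertex lies in $\le L$ simplices, so only $O(L)$ local obstacles appear at any one place. The main obstacle I expect is keeping all constants and logarithmic slack under control so that the final exponent is exactly $1/(n-k)$: a naive union bound and a multi-control-point parameterization both introduce polylogarithmic factors, which is presumably why the authors prove only the weaker exponent $1/(n-k) + \epsilon$ in Theorem 2 and leave the sharp form as a Conjecture.
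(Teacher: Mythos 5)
First, a point of record: the statement you are proving is stated in the paper as a \emph{Conjecture}, not a theorem. The paper does not prove it; it proves only the weaker Theorem 2.1, with radius $C(n,L)\,N^{\frac{1}{n-k}}(\log N)^{2k+2}$, and explicitly leaves the sharp bound $C(n,L)\,N^{\frac{1}{n-k}}$ open. So a correct blind proof here would be a new result, and your own closing paragraph concedes that you cannot remove the polylogarithmic losses --- i.e.\ you concede you have not proved the statement.

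More importantly, even the weaker (polylog) bound does not follow from your sketch, because the central counting step is dimensionally wrong. When you cone $\partial\sigma$ (a $(j-1)$-dimensional set of extent $\sim R$) to a control point $p$, the object you must keep away from a previously placed $j'$-simplex $\tau$ is a $j$-dimensional set, not a point. The measure of apexes $p$ for which this cone meets $N_1(\tau)$ is of order $R^{\,j+j'}$, i.e.\ a fraction $R^{\,j+j'-n}\le R^{\,2k-n}$ of $B^n(R)$ --- which is in fact what your stated per-obstacle fraction $R^k/R^{n-k}$ equals --- but your union bound then records the total as $N/R^{\,n-k}$ instead of $N\,R^{\,2k-n}=N/R^{\,n-2k}$. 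With the correct arithmetic your argument only gives $R\gtrsim N^{\frac{1}{n-2k}}$, which is the trivial single-scale general-position bound (essentially Proposition 2.1 of the paper on random facewise-linear embeddings, where the exponent is $\frac{2}{n-2k}$) and is strictly worse than $N^{\frac{1}{n-k}}$; for $k=1$, $n=3$ it gives $N$ versus the desired $N^{1/2}$. The paper's route to the exponent $\frac{1}{n-k}$ (up to logs) is genuinely two-scale: a random facewise-linear map into $B^n(N^{\frac{1}{n-k}})$ is shown, with high probability, to have only $\lesssim\log N$ simplices meeting any unit ball; one then refines $X$ into unit-size simplices and perturbs each vertex at unit scale, controlling all the bad events by a sequential conditional-probability (Lov\'asz-local-lemma-style) argument, which is exactly where the $(\log N)^{2k+2}$ loss enters. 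Your ``local general-position'' fix does not substitute for this: bounded degree $L$ only limits the simplices \emph{adjacent} to $\sigma$, whereas the obstacles crowding a unit neighborhood of your control point are arbitrary non-adjacent simplices of the whole complex, and bounding how many of them appear there ($\lesssim\log N$ per unit ball) is precisely the step that needs the large-scale randomness and is the crux of the paper's proof.
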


When $k=1$, this conjecture agrees with the Kolmogorov-Barzdin embedding construction, and when $k=0$ it is easy to check.  In all dimensions, we prove this conjecture up to some logarithmic factors.

\newtheorem*{theorem2.1}{Theorem 2.1}

\begin{theorem2.1} Suppose that $X$ is a k-dimensional simplicial complex with $N$ vertices and each vertex lying in at most $L$ simplices.  Suppose that $n \ge 2k+1$.  Then
$X$ embeds with combinatorial thickness $1$ into the n-dimensional ball of radius

$$R \le C(n,L) N^{\frac{1}{n-k}} (\log N)^{2k+2}. $$

\end{theorem2.1}

We first consider random facewise linear embeddings.  A random facewise linear embedding means that we first map each vertex of $X$ randomly to a point in a given ball.  
Then we extend the map linearly on each simplex.  Random facewise linear embeddings have thickness much too
small to prove our theorem, but we
will construct our embedding by modifying a random facewise linear embedding.  To get a perspective, we first compute the thickness of a random facewise
linear embedding.

\begin{prop} Let $X$ be a k-dimensional simplicial complex with $N$ simplices.  Suppose that $n \ge 2k+1$.  A random face-wise linear 
embedding from $X$ into the n-dimensional ball of radius $R$ is usually $T$-thick for

$$ T \sim R N^{- \frac{2}{n-2k}} . $$

In particular, we can find a 1-thick embedding if

$$ R \gtrsim N^{\frac{2}{n-2k}} . $$

\end{prop}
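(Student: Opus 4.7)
The plan is to use a union bound over all pairs of non-adjacent simplices of $X$. Since $X$ has $N$ simplices there are at most $N^2$ such pairs, and the dominating case is when both simplices are $k$-dimensional: pairs involving a lower-dimensional simplex are governed by fewer independent random vertices and span a lower-dimensional set in $\mathbb{R}^n$, which only tightens the estimates. So I reduce to the following key estimate: for two non-adjacent random $k$-simplices $\sigma_1, \sigma_2$ whose $2k+2$ vertices $v_0,\dots,v_k,w_0,\dots,w_k$ are independently uniform in $B^n(R)$,
\[ \Pr\bigl[ d(\sigma_1, \sigma_2) < T \bigr] \;\le\; C(n,k)\,(T/R)^{n-2k}. \]
Given this, the full union bound $\Pr[\text{thickness} < T] \lesssim N^2 (T/R)^{n-2k}$ is less than $1$ precisely when $T \lesssim R N^{-2/(n-2k)}$, which is the required conclusion.

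To establish the key estimate I would parameterize $\sigma_i$ by barycentric coordinates and introduce the function
\[ F(\lambda,\mu) \;=\; \sum_{i=0}^k \lambda_i v_i \;-\; \sum_{j=0}^k \mu_j w_j \;\in\; \mathbb{R}^n, \qquad (\lambda,\mu) \in \Delta^k \times \Delta^k, \]
so that $d(\sigma_1,\sigma_2) = \min_{(\lambda,\mu)} |F(\lambda,\mu)|$. Because $F$ is affine in $(\lambda,\mu)$ and the vertices lie in $B^n(R)$, it is $O(R)$-Lipschitz on $\Delta^k \times \Delta^k$. I would fix a $\delta$-net of $\Delta^k \times \Delta^k$ with $\delta \sim T/R$ and cardinality $\lesssim (R/T)^{2k}$. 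If $d(\sigma_1,\sigma_2) < T$, then some net point $(\lambda^*,\mu^*)$ witnesses $|F(\lambda^*,\mu^*)| \le CT$, and it suffices to bound the probability of this deterministic event and then take a union bound over the net.

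Each fixed $F(\lambda^*,\mu^*)$ is a sum of $2k+2$ independent vectors of the form $\lambda^*_i v_i$ or $-\mu^*_j w_j$, each uniform on a Euclidean ball. By pigeonhole at least one coefficient is $\ge 1/(k+1)$, so the corresponding summand has a density that is uniformly $\lesssim R^{-n}$ on $\mathbb{R}^n$. Convolution with the probability densities of the remaining summands cannot increase the sup norm of the density, so $F(\lambda^*,\mu^*)$ itself has density $\lesssim R^{-n}$ everywhere, giving $\Pr[|F(\lambda^*,\mu^*)| \le CT] \lesssim (T/R)^n$. Taking a union bound over the net yields
\[ \Pr[d(\sigma_1,\sigma_2) < T] \;\lesssim\; (R/T)^{2k}\,(T/R)^n \;=\; (T/R)^{n-2k}, \]
which is exactly the key estimate. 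The only real obstacle is the density bound in the last step; once one has that, the scaling $T \sim R N^{-2/(n-2k)}$ is forced by the two successive union bounds, and the hypothesis $n \ge 2k+1$ enters to make the exponent $n-2k$ positive so that the discretization loss is absorbed.
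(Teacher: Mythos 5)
Your proposal is correct, and its skeleton is the same as the paper's: a union bound over the at most $N^2$ pairs of non-adjacent simplices, driven by the per-pair estimate $\Pr[\,dist(I(\Delta), I(\Delta')) < T\,] \lesssim (T/R)^{n-d-d'} \le (T/R)^{n-2k}$. Where you genuinely add something is in how that per-pair estimate is justified. The paper's proof of this proposition simply asserts the probability is ``roughly'' $(T/R)^{n-d-d'}$ (and the formal substitute it points to elsewhere, Lemma 2.4 via Proposition 3.12, runs through bounds on inverse norms of random matrices in the style of Demmel and B\"urgisser--Cucker--Lotz, which only yields a bound linear in the distance scale). Your argument --- discretize $\Delta^d \times \Delta^{d'}$ by a $(T/R)$-net of size $\lesssim (R/T)^{d+d'}$, note that the affine map $F(\lambda,\mu)=\sum \lambda_i v_i - \sum \mu_j w_j$ is $O(R)$-Lipschitz so closeness of the simplices is witnessed at a net point, and then use the pigeonhole coefficient $\ge 1/(k+1)$ together with the fact that convolving with independent summands cannot increase the sup of a density to get $\Pr[\,|F(\lambda^*,\mu^*)|\le CT\,]\lesssim (T/R)^n$ --- is a clean, self-contained small-ball estimate that delivers the sharp exponent $n-d-d'$, exactly what the $N^2$ union bound needs. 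Two small remarks: your reduction to the case $d=d'=k$ is unnecessary, since the identical argument applies verbatim to any pair of dimensions and gives $(T/R)^{n-d-d'}\le (T/R)^{n-2k}$ (for $T\le R$, which is the relevant regime); and when some barycentric coordinate at a net point vanishes the corresponding summand is a point mass rather than absolutely continuous, but convolution with an arbitrary probability measure still preserves the sup-norm bound on the density, so the step stands as written.
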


\begin{proof} Fix $\Delta, \Delta'$ simplices in $X$ of dimensions $d, d' \le k$.  Suppose that their vertex sets are disjoint.  Then the  probability that the distance from $I(\Delta)$ to $I(\Delta')$ is less than $T$ is roughly $(T / R)^{n - d - d'} \le  (T / R)^{n-2k}$.  Now the number of pairs of simplices is at most $N^2$, and so we get a $T$-thick embedding most of the time provided that

$$C(n) N^2 (T / R)^{n - 2k } < 1/2. $$

\end{proof}

This random embedding is not nearly as good as the Kolmogorov-Barzdin embedding in case $k=1$.  We will improve it by bending the simplices at a smaller scale.  Roughly speaking, we will put our map in general position at
scale $N^{\frac{1}{n-k}}$ and also at scale 1.  Now we turn to the detailed construction of our embedding.

\proof We write $A \lesssim B$ for $A \le C(n, L) B$.

Let $I_0$ be a random facewise linear embedding from $X$ into $B^n(N^{\frac{1}{n-k}})$.  We will deform $I_0$ to an embedding with thickness $\gtrsim (\log N)^{-(2k+2)}$.  
To begin our analysis, we count how many simplices meet unit balls $B(p,1) \subset B^n$.

\begin{lemma} With high probability, each unit ball $B(p,1) \subset B(N^{\frac{1}{n-k}})$ meets $\lesssim \log N$ simplices of $I_0(X)$.
\end{lemma}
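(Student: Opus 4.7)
The plan is to fix a point $p \in B^n(R)$ (with $R = N^{1/(n-k)}$), bound first the expectation and then the upper tail of the number $Y_p$ of simplices of $X$ (of any dimension $j \le k$) whose $I_0$-image meets $B(p,1)$, and then union-bound over a $1$-net of $p$'s in $B^n(R)$.

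For the expectation I would show that for each single simplex $\Delta$ of dimension $j \le k$, one has $P[I_0(\Delta) \cap B(p,1) \neq \emptyset] \lesssim R^{j-n} \le R^{-(n-k)} = 1/N$, uniformly in $p$. The cleanest route is a change of variables $v_0,\ w_i := v_i - v_0$ on the $j+1$ random vertices of $I_0(\Delta)$: with the $w_i$'s frozen, the simplex becomes $v_0 + \Sigma_0$ where $\Sigma_0 = \operatorname{conv}(0, w_1, \ldots, w_j)$, and meets $B(p,1)$ precisely when $v_0$ lies in the translate $p - N_1(\Sigma_0)$. Since $\Sigma_0$ is a $j$-simplex sitting in $B^n(2R)$, one has $\operatorname{Vol}_n(N_1(\Sigma_0)) \lesssim R^j$, independent of $p$; normalizing by the volume of $B^n(R)^{j+1}$ delivers the claimed probability. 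The hypothesis that each vertex lies in at most $L$ simplices forces $X$ to have $\lesssim N$ simplices in total across all dimensions, so linearity of expectation gives $E[Y_p] \lesssim 1$.

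The indicators $\mathbf{1}[I_0(\Delta) \cap B(p,1) \neq \emptyset]$ for distinct $\Delta$ are not independent, since two simplices that share a vertex have coupled indicators. I would handle this by greedily properly coloring the ``shared-vertex graph'' on the simplices of $X$: each simplex shares a vertex with at most $(k+1)L$ others, so we get a partition into $M = M(k,L)$ color classes in which simplices of the same class are pairwise vertex-disjoint and therefore their indicators are mutually independent. Within each class the count of those meeting $B(p,1)$ is a sum of independent $\{0,1\}$ variables with mean $\lesssim 1$, and the classical Chernoff bound yields $P[\text{class count} \ge C \log N] \le N^{-\alpha}$ for any chosen $\alpha$, provided $C = C(\alpha,k,L,n)$ is taken large. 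Summing over the $M$ classes gives $P[Y_p \ge C' \log N] \le M N^{-\alpha}$.

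To conclude, take a $1$-dense net in $B^n(R)$ of cardinality $\lesssim R^n = N^{n/(n-k)}$, which is polynomial in $N$. Choosing $\alpha$ strictly larger than $n/(n-k)$ and applying the union bound shows that simultaneously $Y_q \lesssim \log N$ at every net point $q$ with probability tending to $1$. Any $p \in B^n(R)$ sits within distance $1$ of some net point $q$, so $B(p,1) \subset B(q,2)$, and rerunning the same argument for balls of radius $2$ (which only affects constants) yields the lemma. I expect the main obstacle to be the uniform-in-$p$ probability estimate in the second paragraph: a naive ``average over $p$'' calculation would yield only an average-type bound, whereas the translation-based change of variables is precisely what produces the estimate uniformly in $p$ and bypasses any delicate boundary analysis near $\partial B^n(R)$.
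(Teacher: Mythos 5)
Your proof is correct and follows essentially the same route as the paper: a per-simplex probability bound of order $N^{-1}$, linearity of expectation to get expected count $\lesssim 1$, the same vertex-disjoint coloring trick to restore independence, a Chernoff/binomial tail yielding the $\log N$ threshold, and a union bound over polynomially many balls. The only differences are refinements of presentation: your translation-based change of variables gives the uniform-in-$p$ estimate more carefully than the paper's projection sketch, and you make explicit the net/union-bound step that the paper leaves implicit in the phrase ``for every $p$.''
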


\proof Fix a ball $B(p,1)$ and a simplex $\Delta^d \subset X$.  The probability that $I_0(\Delta^d)$ meets $B(p,1)$ is 
$\lesssim N^{-1}$.  
We can see this as follows.  The probability is essentially independent of which ball $B(p,1)$ we are considering. So we can first
pick $I_0(\Delta^d)$ and then pick a random $p$ and look at the probability that $B(p,1)$ meets $I_0(\Delta^d)$.  We consider the projection
into the plane orthogonal to $I_0(\Delta^d)$.  If $B(p,1)$ meets $I_0(\Delta^d)$, then the projection of $p$ into this plane needs to
lie within a unit distance of the point corresponding to $I_0(\Delta^d)$.  Now the projection of $p$ is basically equally likely to lie anywhere in a ball of radius $R = N^{\frac{1}{n-k}}$
and dimension $n-d \ge n-k$.

Therefore, the expected number of simplices hitting $B(p,1)$ is $\lesssim 1$.  But some balls may meet more simplices than average.

Label the simplices of $X$ $\Delta_1, \Delta_2, ...$  Consider the event that $\Delta_i$ meets $B(p,1)$.  If these events were independent, then
the probability of having $A$ simplices meet $B(p,1)$ would decay $\le {N \choose A} (C/ N)^{A} \le C^A / (A!) \le e^{-A}$ for large $A$.  If that were true, then
with high probability, the number of simplices hitting $B(p,1)$ would be $\lesssim \log N$ for every $p$.

If a set of simplices has no common vertices, then the positions of the simplices really are independent.  So we can finish with a coloring
trick.  Color the simplices of $X$ so that any two simplices sharing a vertex are a different color.  Let $\beta$ denote the number of colors.
Because each vertex lies in $\lesssim L$ simplices, we can bound the number of colors by $(k+1) L$.  With high probability, the number of simplices
of each color meeting $B(p,1)$ is $\lesssim \log N$ for each $p$.  Since the number of colors is $\lesssim 1$, we are done. \endproof

The number of different simplices that intersect a unit ball $B(p,1) \subset B^n(N^{\frac{1}{n-k}})$ is $\lesssim \log N$.  
In some unlucky balls, these simplices pass extremely
close to each other.  We are going to bend the simplices at scale 1 to fix this problem.  By putting simplices in general position within each
unit ball, we will show that they don't need to be any closer to each other than $(\log N)^{-(2k+2)}$.

In order to bend the simplices, we need to decompose each simplex into finer simplices.

\newtheorem*{sdlemma}{Simplex decomposition lemma}

\begin{sdlemma}   We can refine the simplicial complex $I(X)$ into a finer complex $X'$ with the following properties.

1. Each simplex of $X'$ has diameter $\lesssim 1$.

2. Each vertex of $X'$ lies in $\lesssim L$ simplices of $X'$.

3. Each unit ball in $\mathbb{R}^n$ intersects $\lesssim \log N$ simplices of $X'$.

\end{sdlemma}

\begin{proof} Intersect the complex $X$ with a unit cubical lattice $K$.  This leads to a polyhedral structure on $X$ which
refines the original simplicial structure.  Each face of the new structure lies in a unit cube, so it has diameter $\lesssim 1$.
The number of faces of $\Delta \cap K$ which meet a unit ball is $\lesssim 1$.  Now the faces of this polyhedral structure
may not be simplices, but we subdivide them each into simplices.  The number of simplices needed to triangulate each
face is $\lesssim 1$, because there are only finitely many combinatorial possibilities for the faces.  If each vertex of $X$
lies in $\le L$ simplices, then each vertex of $X'$ lies in $\le CL$ simplices. 

Each unit ball in $\mathbb{R}^n$ intersects $\lesssim \log N$ simplices of $X$.  Let $\Delta$ be any simplex of $X$
that intersects $B(p,1)$.  The intersection $\Delta \cap B(p,1)$ is contained in $\lesssim 1$ simplices of $X'$.  \end{proof}

The complex $X'$ is homeomorphic to $X$, with each simplex of $X$ corresponding to many simplices of $X'$.

We now move each vertex of the refined simplicial complex $X'$ by a vector of length $\le 1$, and we extend linearly on the refined simplices.  In this
way, we get a map $I: X' \rightarrow B(N^{\frac{1}{n-k}} + 1)$.  We will check that for a good choice of these vectors, 
the combinatorial thickness of $I$ is $\gtrsim (\log N)^{-(2k+2)}$.  

There is a slight issue with language at this point.  The map $I$ is an embedding from $X'$ to 
$\mathbb{R}^n$ and we can also view it as an embedding from $X$ to $\mathbb{R}^n$.  The combinatorial
thickness depends on whether we use the simplicial structure of $X$ or the simplicial structure of $X'$.
Since $X'$ is a refinement of $X$, the combinatorial thickness with respect to $X'$ is at most the combinatorial
thickness with respect to $X$.  We state this as a lemma.

\begin{lemma} Suppose that $X'$ is a refinement of $X$.  Let $I: X' \rightarrow \mathbb{R}^n$ be an embedding.  Let $T'$ denote the combinatorial thickness of $I$ as an embedding from $X'$ to $\mathbb{R}^n$, and let $T$ denote the combinatorial thickness of $I$ as an embedding from $X$ to $\mathbb{R}^n$.  Then $T' \le T$.
\end{lemma}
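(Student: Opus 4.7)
The plan is to directly exhibit, starting from a minimizing non-adjacent pair of simplices in $X$, a non-adjacent pair of simplices in $X'$ whose images in $\mathbb{R}^n$ are at least as close. Concretely, let $\Delta_1, \Delta_2$ be non-adjacent simplices of $X$ achieving (or nearly achieving) the infimum defining $T$, so that the distance between $I(\Delta_1)$ and $I(\Delta_2)$ equals $T$ (or arbitrarily close to $T$).

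Because $X$ and $I(\Delta_i)$ have compact closures (or by taking a minimizing sequence otherwise), I can pick points $y_i$ in the closure $\overline{\Delta_i}$ with $|I(y_1) - I(y_2)| = T$. Since $X'$ is a refinement of $X$, the closure $\overline{\Delta_i}$ is a union of closures of simplices of $X'$ sitting inside $\Delta_i$, so each $y_i$ lies in the closure of some simplex $\Delta_i' \subseteq \Delta_i$ of $X'$. Then $\overline{I(\Delta_i')}$ contains $I(y_i)$, so the distance between $I(\Delta_1')$ and $I(\Delta_2')$ is at most $|I(y_1) - I(y_2)| = T$.

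To conclude $T' \le T$, I then need to check that $\Delta_1', \Delta_2'$ are non-adjacent as simplices of $X'$. Using the equivalent characterization of non-adjacency in terms of disjoint closures, this is immediate: $\overline{\Delta_1'} \cap \overline{\Delta_2'} \subseteq \overline{\Delta_1} \cap \overline{\Delta_2} = \emptyset$. So $\Delta_1'$ and $\Delta_2'$ form an admissible pair in the infimum defining $T'$, and hence $T' \le T$.

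There is no real obstacle here; the only point to be slightly careful about is distinguishing simplices from their closures and making sure that a refinement literally subdivides each $\overline{\Delta}$ as a union of closures of simplices of $X'$, which is exactly what ``refinement'' means. If one prefers to avoid working in closures, one can handle the case where the infimum is achieved on the boundary by replacing the equality $|I(y_1)-I(y_2)| = T$ with ``at most $T + \varepsilon$'' for arbitrary $\varepsilon > 0$ and then letting $\varepsilon \to 0$; the argument is otherwise identical.
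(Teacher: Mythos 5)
Your proposal is correct and follows essentially the same route as the paper's proof: take a (nearly) minimizing non-adjacent pair in $X$, pick points in the closures realizing the distance, pass to the simplices of $X'$ containing those points, and note their closures are disjoint because they sit inside the disjoint closures of the original pair. The only cosmetic difference is that the paper bounds the distance of every non-adjacent pair of $X$ from below by $T'$ rather than working with a minimizing pair, which is the same argument read in the other direction.
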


\begin{proof}  Consider two non-adjacent simplices $\Delta_1, \Delta_2$ in $X$.  Let $d$ denote the
distance from $I(\Delta_1)$ to $I(\Delta_2)$.  We can find points $p_1$ in the closure of $\Delta_1$ and $p_2$ in
the closure of $\Delta_2$ so 
that $| I(p_1) - I(p_2) | = d$.  Let $\Delta_1' \subset \Delta_1$ be the smallest simplex of $X'$ containing $p_1$, and
similarly for $p_2$.  Since $X'$ is a refinement of $X$, the simplex $\Delta_1'$ is contained in the closure of $\Delta_1$
and $\Delta_2'$ is contained in the closure of $\Delta_2$.  Hence $\Delta_1'$ and $\Delta_2'$ are non-adjacent.
We conclude that $d$ is at least the distance between $I(\Delta_1')$ and $I(\Delta_2')$ which is at least $T'$.  So
$T \ge T'$. \end{proof}

Therefore, it suffices to choose $I$ so that the combinatorial thickness of $I: X' \rightarrow B(N^{\frac{1}{n-k}} + 1)$ 
is $\gtrsim (\log N)^{-(2k+2)}$. 

Let $\{ v_i \}$ denote the vertices of $X'$.  We will define $I(v_i)$ one vertex at a time, and then extend $I$ linearly to each simplex of $X'$.
We will choose $I(v_i)$ in the unit ball around $I_0(v_i)$.  Let $Con(i)$ denote this unit ball with probability measure given by 
renormalizing the volume measure. Then let $Con$ denote the product $\prod Con(i)$ with the product measure.  Here $Con$ 
is the configuration space of all of our choices, and we have to find an element of $Con$ so that the thickness of $I$ is pretty big.  (Remark: A random element of $Con$ does not usually have the desired properties, but introducing this
probability measure is still useful in our proof.)

For simplices $\Delta_1, \Delta_2 \subset X'$ with no common vertices, we define $Bad_\epsilon(\Delta_1, \Delta_2)$ to be the set of
all the configurations where the distance from $I(\Delta_1)$ to $I(\Delta_2)$ is $\le \epsilon$.  We will choose an $\epsilon \sim
(\log N)^{-(2k+2)}$, and it suffices to find a configuration which does not lie in any of the sets $Bad_\epsilon(\Delta_1, \Delta_2)$.

The main helpful point is that many of the sets $Bad_\epsilon(\Delta_1, \Delta_2)$ are empty.  If $dist(\Delta_1, \Delta_2) > 3$, then no matter how we move the vertices by distances $< 1$, $I(\Delta_1)$ and $I(\Delta_2)$ will still be separated by at least 1.  Since we will choose $\epsilon < 1$, $Bad_\epsilon(\Delta_1, \Delta_2)$ will be empty.  
From now on, we only discuss the non-empty sets $Bad_\epsilon(\Delta_1, \Delta_2)$.
By Lemma 2.2 and the simplex decomposition lemma, there are $\lesssim \log N$ simplices of $X'$ in any ball of radius 10.  In particular, for each vertex $v_i$, there are $\lesssim \log N$ non empty sets  $Bad_{\epsilon}(\Delta_1, \Delta_2)$ where $v_i$ lies in $\Delta_1$ or $\Delta_2$.

We can also estimate the probability of $Bad_\epsilon(\Delta_1, \Delta_2)$ in the non-empty case.  

\begin{lemma} For any simplices $\Delta_1, \Delta_2$ with no vertex in common, 
the probability of $Bad_\epsilon(\Delta_1, \Delta_2)$ is $\lesssim \epsilon$.
\end{lemma}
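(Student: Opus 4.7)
The plan is to mimic the codimension-counting calculation from the proposition on random face-wise linear embeddings, but localized to the unit balls around the $I_0(v_i)$. Fix barycentric coordinates $\lambda$ on $\Delta_1$ and $\mu$ on $\Delta_2$ (with $d_1 = \dim \Delta_1$, $d_2 = \dim \Delta_2$, both $\le k$), and consider the random vector
$$F(\lambda,\mu) \;=\; \sum_i \lambda_i\, I(v_i) \;-\; \sum_j \mu_j\, I(w_j) \;\in\; \mathbb{R}^n,$$
where $v_i$ ranges over the vertices of $\Delta_1$ and $w_j$ over those of $\Delta_2$. Since the vertex sets are disjoint, the $I(v_i)$ and $I(w_j)$ are mutually independent, each uniform on a unit ball. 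By pigeonhole, some coefficient among the $\lambda_i,\mu_j$ is $\ge 1/(d_1+d_2+2) \gtrsim 1$; after relabeling, call it $\lambda_1$. Isolating the term $U_1 := I(v_1)-I_0(v_1)$ gives $F = F_0 + \lambda_1 U_1 + Z$ with $Z$ independent of $U_1$, and the density of $\lambda_1 U_1$ on $\mathbb{R}^n$ is $\lesssim \lambda_1^{-n}\lesssim 1$. Hence
$$\Pr\bigl(|F(\lambda,\mu)| \le \epsilon\bigr) \;\lesssim\; \epsilon^n.$$

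Next I would discretize the barycentric parameters. By the simplex decomposition lemma, every simplex of $X'$ has diameter $\lesssim 1$, so after a $1$-bounded perturbation the images $I(\Delta_1), I(\Delta_2)$ still have diameter $\lesssim 1$, uniformly in the random choice. Consequently $F$ is $O(1)$-Lipschitz in $(\lambda,\mu)\in \Delta^{d_1}\times \Delta^{d_2}$, and an $\epsilon$-net in the parameter domain has cardinality $\lesssim \epsilon^{-(d_1+d_2)}$. The event $\mathrm{dist}(I(\Delta_1),I(\Delta_2))\le\epsilon$ forces $|F(\lambda^*,\mu^*)|\lesssim \epsilon$ for some net point $(\lambda^*,\mu^*)$, and a union bound gives
$$\Pr\bigl(Bad_\epsilon(\Delta_1,\Delta_2)\bigr) \;\lesssim\; \epsilon^{-(d_1+d_2)}\cdot \epsilon^n \;=\; \epsilon^{\,n-d_1-d_2} \;\le\; \epsilon,$$
the last inequality because $n\ge 2k+1$ forces $n - d_1 - d_2 \ge 1$.

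The main obstacle I anticipate is the uniform density bound in the first step: one needs to rule out that the barycentric combination $\sum \lambda_i I(v_i) - \sum \mu_j I(w_j)$ becomes degenerate in distribution as the coefficients vary over the simplex. The pigeonhole step is precisely what pulls us out, since it guarantees at least one independent vertex perturbation appears in $F$ with coefficient bounded below, and that single ingredient spreads the distribution of $F$ over $\mathbb{R}^n$ with bounded density. Once this is in hand, the rest is the standard discretize-and-union-bound argument, essentially the same as the calculation producing $(T/R)^{n-2k}$ in the earlier random-embedding proposition.
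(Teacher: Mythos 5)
Your argument is correct, and it takes a genuinely different route from the paper's. The paper does not prove this lemma in place: it remarks that the statement is "similar to the analysis of a random facewise linear embedding" and defers the formal proof to Proposition 3.12, which argues via linear algebra — if the $\epsilon$-neighborhoods of the two simplices (in fact of the affine planes they span) intersect, then a matrix $M$ built from the randomly perturbed vertex coordinates satisfies $|M\vec{c}|\le 2\epsilon$ for a unit-size vector $\vec{c}$, so $\|M^{-1}\|\gtrsim \epsilon^{-1}$, and the random-matrix small-ball estimate of Proposition 3.11 (a determinant anti-concentration bound imported from numerical analysis) makes that event have probability $\lesssim \epsilon$. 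You instead make the heuristic of Proposition 2.1 rigorous directly: for each fixed pair of barycentric parameters, pigeonhole produces a coefficient $\gtrsim 1$ attached to an independent uniform unit-ball perturbation, which gives a conditional density bound and hence probability $\lesssim \epsilon^n$ for that parameter value; the deterministic diameter bound $\lesssim 1$ on simplices of $X'$ (from the simplex decomposition lemma, plus the $1$-bounded perturbation) makes $F$ uniformly Lipschitz in the parameters, so an $\epsilon$-net of size $\lesssim \epsilon^{-(d_1+d_2)}$ and a union bound give $\lesssim \epsilon^{n-d_1-d_2}\le\epsilon$, using $n\ge 2k+1$. Your version is more elementary and self-contained (no determinant estimate needed) and actually yields the stronger exponent $n-d_1-d_2$; the paper's route buys the extra conclusion about the spanned planes $P_1,P_2$, which is not needed for this lemma but is reused later in the strong-combinatorial-thickness argument (Lemma 3.9 via Propositions 3.12 and 3.13), so one tool serves both purposes there.
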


This lemma is similar to the analysis of a random facewise linear embedding.  There is also a formal proof later in
Proposition 3.12.

Now we choose $v_1$, then $v_2$, etc.  As we make each choice, we keep track of the conditional probability of the
sets $Bad_\epsilon(\Delta_1, \Delta_2)$, given the choices we have made so far.  This analysis is similar to
the Lovasz local lemma in \cite{EL}.  

When we pick $v_i$, the conditional
probability of $Bad_\epsilon(\Delta_1, \Delta_2)$ changes only if $v_i$ is a vertex of $\Delta_1$ or $\Delta_2$.  So the
choice of $v_i$ affects $\lesssim \log N$ conditional probabilities.  The odds of increasing any conditional probability
by a factor $A$ are $\le (1/A)$.  Therefore, we can choose $v_i$ so that the conditional probabilities of $Bad_\epsilon(\Delta_1, \Delta_2)$
increase by at most a factor $\lesssim \log N$, when $v_i$ is a vertex of $\Delta_1$ or $\Delta_2$.

Each pair $(\Delta_1, \Delta_2)$ has in total $\le 2k+2$ vertices.  Therefore, after we have chosen all $v_i$, the conditional
probability of $Bad_\epsilon(\Delta_1, \Delta_2)$ has increased by a factor $\lesssim (\log N)^{2k+2}$.  Since the probability
of $Bad_\epsilon(\Delta_1, \Delta_2)$ was $ \lesssim \epsilon$ before making any choices, after all our choices, the conditional
probability of $Bad_\epsilon(\Delta_1, \Delta_2)$ is $\lesssim \epsilon (\log N)^{2k+2}$.  Of course, once we have chosen all $I(v_i)$,
the conditional probability of $Bad_\epsilon(\Delta_1, \Delta_2)$ is either 1 or 0, depending on whether our choice lies
in $Bad_\epsilon(\Delta_1, \Delta_2)$ or not.  We pick $\epsilon$ sufficiently small so that all conditional probabilities
of $Bad_\epsilon(\Delta_1, \Delta_2)$ are $< 1$.  This $\epsilon$ is $\sim (\log N)^{-(2k+2)}$.  Now our choice
of the map $I$ does not lie
in any of the sets $Bad_\epsilon(\Delta_1, \Delta_2)$.  In other words, the embedding $I: X' \rightarrow B(N^{\frac{1}{n-k}})$ has combinatorial thickness $\ge \epsilon \gtrsim (\log N)^{-(2k+2)}$.  \endproof

\subsection{Complexes that are hard to embed in Euclidean space}

In this section, we prove that some complexes are hard to embed in Euclidean space.  Our
result generalizes the result of Kolmogorov and Barzdin that some graphs are hard
to embed in $\mathbb{R}^3$, but our estimate is a little less sharp than theirs.  

\newtheorem*{theorem2.2}{Theorem 2.2}

\begin{theorem2.2} Fix $k$ and $n$ with $n \ge 2k+1$.  For any $\epsilon > 0$, there
is a constant $L(\epsilon)$, and a sequence of k-dimensional simplicial complexes
$X_i$ with the following properties.

Each vertex in each $X_i$ belongs to $\le L(\epsilon)$ simplices.

The number of simplices in $X_i$ is $N_i \rightarrow \infty$.

If $X_i$ is embedded into $\mathbb{R}^n$ with combinatorial thickness 1, then
the 1-neighborhood of the image has volume at least $N_i^{\frac{n}{n-k} - \epsilon}$.

\end{theorem2.2}

A key fact in the argument of Kolmogorov and Barzdin is that if we map an expander
graph to $\mathbb{R}$, then one of the fibers of the map intersects many edges.
Based on this observation, we define a combinatorial width of k-dimensional complexes.

If $X$ is a k-dimensional simplicial complex, we say that $X$ has combinatorial width
$\ge W$ (over $\mathbb{R}^k$) if any continuous map $F: X \rightarrow \mathbb{R}^k$
has a fiber that intersects $\ge W$ closed k-simplices of $X$.  We illustrate the definition
with two examples.

\vskip10pt

{\bf Example 1.}  If $X$ is an expander graph with $N$ vertices and expansion constant
$h$, then the combinatorial width of $X$ over $\mathbb{R}^1$ is $\ge h N / 2$.
(proof.  pick $y \in \mathbb{R}$ to be the smallest number so that at least half of the vertices of $X$ are mapped to
$( - \infty, y]$.  We let $V_- \subset X$ denote the vertices of $X$ mapped to $(- \infty, y]$ and $V_+$
denote the vertices mapped to $[y, + \infty)$.  By the expansion property, there are at least $Nh / 2$ edges
from $V_-$ to $V_+$.  The image of each of these (closed) edges contains $y$. )

\vskip10pt

{\bf Example 2.} If $X$ is the k-skeleton of the $N$-simplex, then the number of $k$-faces of $X$
is ${N+1 \choose k+1}$.  It is shown in \cite{Gr2} that the
combinatorial width of $X$ over $\mathbb{R}^k$ is at least $c_k {N+1 \choose k+1}$.

\vskip10pt

We can relate the thickness of embeddings and the combinatorial width, basically following
Kolmogorov and Barzdin.

\begin{prop} Suppose that $X$ is a k-dimensional simplicial complex and that each vertex lies in at most $L$ k-simplices. 
Let $W$ denote the combinatorial width of $X$ over $\mathbb{R}^k$.
Let $I$ be an embedding of $X$ into $\mathbb{R}^n$ with combinatorial thickness at least 1.  We also assume that $n \ge 2k + 1$.
Then the 1-neighborhood of $I(X)$ has volume $\gtrsim L^{-\frac{n}{n-k}} W^{\frac{n}{n-k}}$.  
\end{prop}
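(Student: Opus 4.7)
The plan is to generalize the slicing argument that Kolmogorov and Barzdin used for $k=1$, $n=3$, by combining Falconer's slicing estimate with the combinatorial width of $X$. Since $n \ge 2k+1$, the leaf dimension $n-k$ satisfies $n-k > n/2$, so Falconer's inequality applies to slicing by affine $(n-k)$-planes. Setting $U = N_1(I(X))$, Falconer produces a foliation of $\mathbb{R}^n$ by parallel affine $(n-k)$-planes such that each leaf meets $U$ in $(n-k)$-area at most $C_n \,\mathrm{Vol}(U)^{(n-k)/n}$.

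Next I would convert this foliation into a continuous map to $\mathbb{R}^k$: let $P\colon \mathbb{R}^n \to \mathbb{R}^k$ be the orthogonal projection whose fibers are exactly the $(n-k)$-planes of the Falconer foliation, and apply the combinatorial width hypothesis to $P \circ I \colon X \to \mathbb{R}^k$. This produces at least one fiber $\pi = P^{-1}(y)$ that meets at least $W$ closed $k$-simplices of $X$. The key bridging step is then to extract from these $W$ simplices a subfamily that is pairwise non-adjacent, since combinatorial thickness only controls distances between non-adjacent pairs. Because each vertex of $X$ lies in at most $L$ $k$-simplices and each $k$-simplex has $k+1$ vertices, the adjacency graph on $k$-simplices has maximum degree at most $(k+1)L$; greedy coloring plus pigeonhole yields $\gtrsim W/L$ pairwise non-adjacent $k$-simplices all meeting $\pi$.

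The remainder is a packing argument inside $\pi$. For each surviving $k$-simplex pick a point $p_j \in I(\Delta_j) \cap \pi$. By combinatorial thickness $\ge 1$ the open $n$-balls $B(p_j, 1/2)$ are pairwise disjoint and contained in $U$, and each meets $\pi$ in an $(n-k)$-dimensional half-ball of area $\gtrsim 1$. Hence $|\pi \cap U|_{n-k} \gtrsim W/L$. Chaining this with Falconer's bound on the same slice $\pi$ gives $(W/L) \lesssim \mathrm{Vol}(U)^{(n-k)/n}$, which rearranges to $\mathrm{Vol}(U) \gtrsim L^{-n/(n-k)} W^{n/(n-k)}$.

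The step I expect to be the main obstacle is the coloring reduction: one must invoke the local geometry hypothesis to bridge from ``many $k$-simplices meet $\pi$'' (the output of combinatorial width) to ``many pairwise non-adjacent $k$-simplices meet $\pi$'' (the input needed for combinatorial thickness). Everything else is essentially packaging: Falconer provides the slicing foliation, combinatorial width provides the slice with many incidences, and thickness converts each incidence into a disjoint half-ball in the slice.
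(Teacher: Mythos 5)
Your proposal is correct and follows essentially the same route as the paper: Falconer slicing by parallel $(n-k)$-planes (valid since $n-k > n/2$), applying the combinatorial width hypothesis to the induced projection $X \to \mathbb{R}^k$ to get a fiber meeting $\ge W$ closed $k$-simplices, thinning to $\gtrsim W/L$ pairwise non-adjacent ones via the degree bound $(k+1)L$, and packing disjoint balls of radius $1/2$ centered on the slice to force $\mathrm{Vol}(U)^{(n-k)/n} \gtrsim W/L$. The only cosmetic slip is calling the intersection of $B(p_j,1/2)$ with the slice a ``half-ball''; since $p_j$ lies on the slicing plane it is a full $(n-k)$-ball of radius $1/2$, which is what you need anyway.
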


\begin{proof} Let $X^+$ denote the 1-neighborhood of $I(X)$, and let $V$ be its volume.
By the Falconer slicing estimate, we can map $\mathbb{R}^n$ into $\mathbb{R}^k$ so that each fiber meets $X^+ $ in an area at most $C(k,n) V^{\frac{n-k}{n}}$.  
One of these fibers intersects at least $W$ simplices in $I(X)$.  The number of k-simplices in $X$ that share a vertex with a given k-simplex is at
most $(k+1) L$.  Therefore, we can find $ W L^{-1} (k+1)^{-1}$ simplices in the fiber which do not share any vertices.
Hence the slice must contain $\ge (k+1)^{-1} W L^{-1}$ disjoint unit balls.  So we see that $V^{\frac{n-k}{n}} \gtrsim L^{-1} W$. \end{proof}

Remark: We used the condition $n \ge 2k+1$ in the proof in order to apply the Falconer slicing estimate.  But the
result is probably true for all $k, n$.

If we apply Proposition 2.5 to expanders, we recover the estimate of Kolmogorov and Barzdin.  
Namely, if $X$ is an expander of degree $\lesssim 1$ and expansion constant $h$, we see that the 1-neighborhood of $I(X)$ has volume $\gtrsim |X|^{\frac{n}{n-1}}$.
We can also apply Proposition 2.5 to the k-skeleton of the $N$-simplex.  Each vertex lies in $\lesssim N^k$ k-simplices.  The width
$W$ is $\gtrsim N^{k+1}$.  Therefore, the volume of the 1-neighborhood is at least $N^\frac{n}{n-k}$.  (Is it sharp?)

It would be interesting to know if there are k-dimensional complexes $X$ with locally bounded geometry and large
combinatorial width, generalizing the expander graphs.  More precisely, can we find a sequence of k-dimensional
complexes $X_i$ containing $N_i \rightarrow \infty$ k-simplices so that each vertex of $X_i$ lies in $\le L$
k-simplices and the width of $X_i \ge \alpha N_i$?  (Here $L < \infty$ and $\alpha > 0$ are constants, not depending on
$i$.)  This question was raised in \cite{Gr2}.  It is open.  There are some partial results of different kinds in \cite{FGLNP}
and \cite{Gr2}.

The $X_i$ that we use to prove Theorem 2.2 are the k-skeleta of $D$-dimensional lattices for large $D$.  These
examples are not quite as good as expanders, which leads to the $\epsilon$ in the statement of Theorem 2.2.

Let $X^{k,D}(S)$ denote the k-skeleton of the $D$-dimensional
cubical lattice with lattice spacing 1 restricted to a grid of side length $S$.  This $X^{k,D}(S)$ is a cubical
complex.  The definitions of combinatorial width for cubical complexes 
is analogous to the definition for simplicial complexes.  We will estimate the combinatorial width
of $X^{k,D}(S)$ as a cubical complex.

We will pick a large but fixed $D$ and then study the asymptotics of the spaces $X^{k,D}(S)$ as 
$S \rightarrow \infty$.
The complex $X^{k,D}(S)$ has $\sim c(D,k) S^D$ cubical faces (of all dimensions $0 \le j \le k$).  
Each vertex lies in $\le L(D, k)$ faces.

\begin{prop} The combinatorial width of $X^{k,D}(S)$ over $\mathbb{R}^k$ is $\ge c(D,k) S^{D-k}$.
\end{prop}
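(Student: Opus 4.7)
The plan is to reduce the combinatorial width lower bound to a waist-type inequality on the full cube $[0,S]^D$, and then convert the resulting volume estimate on the fiber into a count of $k$-faces meeting it.

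Given a continuous $F\colon X^{k,D}(S)\to\mathbb{R}^k$, I would first extend it to a continuous map $\tilde F\colon [0,S]^D\to\mathbb{R}^k$. This is always possible because $\mathbb{R}^k$ is contractible and $X^{k,D}(S)$ is a subcomplex of the natural cubical CW structure on $[0,S]^D$: one extends inductively over $(k+1)$-cells, $(k+2)$-cells, and so on up to the $D$-cells, using at each stage that the attaching sphere is null-homotopic in the contractible target.

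Next I would apply a cubical waist inequality to $\tilde F$ to produce a point $y_0\in\mathbb{R}^k$ whose fiber $V:=\tilde F^{-1}(y_0)\subset[0,S]^D$ has $(D-k)$-dimensional Hausdorff measure at least $c_1(D,k)\,S^{D-k}$. This plays the role here that Falconer's slicing estimate plays in the Kolmogorov-Barzdin argument, except that the needed inequality is a lower (rather than upper) bound on some distinguished fiber. For a cube, such an estimate can be established by a Borsuk-Ulam style argument on the parameter space of coordinate half-spaces, or by appealing to an existing cubical version of Gromov's waist theorem.

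The most delicate step is converting the $(D-k)$-volume estimate on $V$ into a count of $k$-faces of $X^{k,D}(S)$ that $F^{-1}(y_0)=V\cap X^{k,D}(S)$ actually meets. I would choose the extension carefully so that on the interior of each $D$-cell the map $\tilde F$ factors, up to homotopy in $\mathbb{R}^k$, through the values of $F$ on the cell's boundary $k$-skeleton. A packing estimate then finishes the job: after a small perturbation, $V$ is a PL $(D-k)$-cycle in general position with respect to the cubical lattice, so a surface of $(D-k)$-volume $\gtrsim S^{D-k}$ must cross $\gtrsim S^{D-k}$ distinct unit $D$-cubes, and since each $k$-face is shared by only $2^{D-k}$ such cubes, we obtain at least $c(D,k)\,S^{D-k}$ distinct $k$-faces hit by $V$. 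This conversion is also the main obstacle: a naive extension $\tilde F$ can pile preimages of $y_0$ into high-dimensional cell interiors, producing fiber volume that does not translate into $k$-skeleton intersections, so the real work is either in arranging the extension to avoid this pathology or in replacing the packing step with a mod~$2$ intersection-number pairing between the PL cycle $V$ and the $k$-skeleton that directly counts transverse meetings with $k$-faces.
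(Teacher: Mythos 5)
Your strategy---extend $F$ to the solid cube $[0,S]^D$, invoke the geometric waist inequality to get a fiber of $(D-k)$-volume $\gtrsim S^{D-k}$, and then transfer this back to a count of $k$-faces met by $F^{-1}(y_0)$---does not close, and the place it fails is exactly the step you flag at the end as ``the real work.'' The paper states the width-of-the-cube theorem only as a motivating geometric analogue; its proof never extends $F$. The difficulty is that the fiber $V=\tilde F^{-1}(y_0)$ of an extension carries its $(D-k)$-volume in the interiors of cells of dimension greater than $k$, and nothing forces that volume to be reflected in intersections of $F^{-1}(y_0)=V\cap X^{k,D}(S)$ with $k$-faces. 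Your packing step is wrong on two counts: first, a fiber of $(D-k)$-volume $\gtrsim S^{D-k}$ need not cross $\gtrsim S^{D-k}$ unit cubes, since fibers are not affine planes and their volume can concentrate in a bounded region; second, even a fiber that does pass through many unit $D$-cubes can do so through their interiors and $(D-1)$-faces without ever touching a $k$-face, because a $k$-face and a $(D-k)$-dimensional set in $\mathbb{R}^D$ meet generically in dimension $0$ and transversality permits the intersection to be empty. The proposed repair via a mod $2$ intersection pairing is also not available as stated: an individual $k$-face is not a cycle, the ambient cube has trivial homology, and there is no a priori reason the relevant intersection numbers are nonzero for many faces. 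Organizing such a cohomological argument correctly is essentially the content of the machinery the paper quotes, not a routine transversality remark.

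What the paper actually does is combinatorial and cohomological, with no extension to the solid cube. It invokes the width-and-filling inequality of \cite{Gr2}: if $X$ is a connected $k$-dimensional cubical (or simplicial) complex with $H^j(X,\mathbb{Z}_2)=0$ for $1\le j\le k-1$, then the combinatorial width over $\mathbb{R}^k$ satisfies $W\ge c_k\bigl[\prod_{j=1}^k Fill(j)\bigr]^{-1}N$, where $Fill(j)$ are the cochain filling constants and $N\sim S^D$ is the size of $X^{k,D}(S)$. The work specific to the lattice skeleton is Proposition 2.7, $Fill(j)\le C(D,k)S$, which is proved by Poincar\'e duality in the ambient $D$-dimensional grid (a coboundary $j$-cochain of $X^{k,D}(S)$ becomes a relative $(D-j)$-cycle in the dual grid) followed by a combinatorial Federer--Fleming cone/pushout argument and the deformation theorem. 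Plugging $Fill(j)\lesssim S$ and $N\sim S^D$ into the inequality gives $W\gtrsim S^{D-k}$. If you wanted to pursue your geometric route, you would in effect have to reprove the \cite{Gr2} inequality, since the passage from ``some fiber is metrically large'' to ``some fiber of a map defined only on the $k$-skeleton meets many $k$-faces'' is precisely what that theorem accomplishes, via algebraic isoperimetry for cochains rather than via an extension of $F$ to $[0,S]^D$.
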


Using Proposition 2.6, we can quickly finish the proof of Theorem 2.2.

\begin{proof}[Proof of Theorem 2.2]
We fix $D$ sufficiently large, and then study the sequence of complexes $X^{k,D}(S)$ as 
$S \rightarrow \infty$.  Since the statement of Theorem 2.2 involves simplicial complexes,
we subdivide each cubical complex $X^{k,D}(S)$ to get a simplicial complex $X^{k,D}_{simp}(S)$.  

Let us write $W(S)$ to abbreviate the combinatorial width of $X^{k,D}_{simp}(S)$, and let us write
$N(S)$ to abbreviate the number of simplices in $X^{k,D}_{simp}(S)$.

The combinatorial width of  $X^{k,D}_{simp}(S)$ is at least as big as the combinatorial width of
$X^{k,D}(S)$.  By Proposition 2.6, $W(S) \ge c(D,k) S^{D-k}$.

On the other hand, $X^{k,D}_{simp}(S)$ has $\le C(D,k) S^D$ faces, so $N(S) \le C(D,k) S^D$.
Therefore, $W(S) \ge c(D,k) N(S)^\frac{D-k}{D}$.  We choose $D = D(\epsilon,k)$ sufficiently large so that
$W(S) \ge c(\epsilon,k) N(S)^{1- \epsilon}$.

The local geometry of $X^{k,D}_{simp}(S)$ depends only on $k$ and $D$.  In particular, each vertex of
$X^{k,D}(S)$ lies in $\le L(D,k)$ simplices of $X^{k,D}_{simp}(S)$.

Now we apply Proposition 2.5.  If $I$ is an embedding from $X^{k,D}_{simp}(S)$ into
$\mathbb{R}^n$ with combinatorial thickness $\ge 1$, then the 1-neighborhood of the image of $I$
has volume at least $c(D,k) W(S)^{\frac{n}{n-k}} \ge c(D,k) N(S)^{\frac{n}{n-k} - \epsilon}$.  \end{proof}

In the remainder of Section 2.2, we explain the proof of Proposition 2.6, our estimate for the combinatorial
width of $X^{k,D}(S)$.  This combinatorial width estimate is a combinatorial analogue
of a purely geometric estimate for the width.

\newtheorem*{widcube}{Width of the cube}

\begin{widcube} If $F: [0,S]^D \rightarrow \mathbb{R}^k$ is a generic smooth map, then
one of the fibers of $F$ has $(D-k)$-volume at least $c(D,k) S^{D-k}$.
\end{widcube}

(This geometric theorem was proven by Almgren using minimal surface theory.  In \cite{Gr3},
there is a more elementary proof using isoperimetric inequalities.  See \cite{Gu} for more details.)
In \cite{Gr2}, the isoperimetric proof was adapted to the combinatorial
setting.  Instead of using classical isoperimetric inequalities, this method uses filling
inequalities for cochains.  Let's recall what a filling inequality is.

Let $X$ denote a k-dimensional polyhedral complex.  We let $C^*(X, \mathbb{Z}_2)$ denote
the polyhedral cochains of $X$.  For a polyhedral $j$-cochain $\alpha$, we define $\| \alpha \|_{1}$
to be the number of j-faces in the support of $\alpha$.  Now we define the constant $Fill(j)$
to be the smallest number so that any coboundary $\alpha \in C^j(X, \mathbb{Z}_2)$ has
a ``filling" $\beta \in C^{j-1}(X , \mathbb{Z}_2)$ so that $d \beta = \alpha$ and
$\| \beta \|_1 \le Fill(j) \| \alpha \|_1$.  (For a finite complex $X$, the constant $Fill(j)$
is finite, but for an infinite complex, it could be $+ \infty$.)

The combinatorial width of a complex can be controlled in terms of its filling inequalities
by the following theorem.

\newtheorem*{widfill}{Width and filling}

\begin{widfill} (\cite{Gr2}) Suppose that $X$ is a k-dimensional simplicial complex or cubical complex
with $N$ vertices.  Suppose $X$ is connected and $H^j(X, \mathbb{Z}_2) = 0$ for $1 \le j \le k-1$.
Let $W$ be the combinatorial width of $X$ over $\mathbb{R}^k$.  Then the following inequality
holds.

$$ W \ge c_k \left[ \prod_{j=1}^k Fill(j) \right] ^{-1} N. \eqno{(**)}$$

\end{widfill}

(For explanation and proof see \cite{Gr2} and \cite{D}.)

The spaces $X^{k,D}(S)$ are connected and have vanishing homotopy and cohomology groups in
dimensions $1 \le j \le k-1$, so this inequality applies to them.  
We now bound the filling constants of the spaces $X^{k,D}(S)$.

\begin{prop} If $X = X^{k,D}(S)$ defined above, then the filling constant $Fill(j)$ is
$\le C(D,k) S$.
\end{prop}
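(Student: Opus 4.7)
The plan is to construct an explicit, $\ell^1$-bounded contracting homotopy for the cochain complex of the full cube $[0,S]^D$ (with $\mathbb{Z}_2$ coefficients), using its tensor-product structure, then check that it restricts cleanly to the $k$-skeleton. As a cochain complex with $\ell^1$-norm counting supports, $C^*([0,S]^D,\mathbb{Z}_2) \cong \bigotimes_{i=1}^D C^*([0,S],\mathbb{Z}_2)$, and the $\ell^1$-norm is multiplicative on pure tensors. For the one-dimensional factor, define $K_0\colon C^1\to C^0$ by $(K_0\alpha)(v) = \sum_{e\subset [0,v]} \alpha(e) \pmod 2$. A direct check yields $dK_0 + K_0 d = \mathrm{id} - \pi_0$, where $\pi_0$ projects onto the vertex $0$; and since $K_0(\mathbf{1}_e)$ is supported on at most $S$ vertices, $\|K_0\|_{1\to 1}\le S$.

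I then iterate the standard tensor-product construction: for cochain complexes $X,Y$ with contracting homotopies $K_X,K_Y$ and projections $\pi_X,\pi_Y$, the operator $K_{X\otimes Y} := K_X\otimes \mathrm{id} + \pi_X\otimes K_Y$ satisfies $d K_{X\otimes Y} + K_{X\otimes Y}d = \mathrm{id} - \pi_X\otimes \pi_Y$ (signs are trivial over $\mathbb{Z}_2$). Iterating $D$ times yields
$$ K \;=\; \sum_{i=1}^D \pi_0^{\otimes (i-1)}\otimes K_0 \otimes \mathrm{id}^{\otimes(D-i)} $$
with $dK + Kd = \mathrm{id} - \pi_0^{\otimes D}$. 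Each of the $D$ summands has $\ell^1$-operator norm at most $S$ (the $K_0$ slot contributes $S$; the remaining slots are operator-norm at most $1$), so $\|K\|_{1\to 1}\le DS$.

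For a coboundary $\alpha\in C^j$ with $1\le j\le k$, the term $\pi_0^{\otimes D}(\alpha)$ vanishes since it is supported only in cochain degree $0$, and so $\beta := K\alpha$ satisfies $d\beta = \alpha$ with $\|\beta\|_1\le DS\,\|\alpha\|_1$. Finally I descend to the $k$-skeleton: every cell used to compute $K\alpha$ and $d\beta$ has dimension at most $k$, so $\beta\in C^{j-1}(X^{k,D}(S))$ and the identity $d\beta=\alpha$ holds in $X^{k,D}(S)$, since its differential agrees with that of $[0,S]^D$ in dimensions $\le k$. This yields $Fill(j)\le DS\le C(D,k)\,S$.

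The step requiring the most care is checking the tensor-product identity $dK+Kd=\mathrm{id}-\pi_0^{\otimes D}$ and confirming that $K$ truly restricts to the $k$-skeleton. The former is a routine induction from the one-dimensional identity. The latter holds because $K$ is built from $K_0$, $\pi_0$, and identity operators tensored together, so applied to a cochain supported in dimensions $\le k$ it produces one supported in dimensions $\le k-1$; all cells involved live in $X^{k,D}(S)$.
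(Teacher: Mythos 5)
Your algebraic setup is fine (the one–dimensional homotopy identity, the tensor formula $dK+Kd=\mathrm{id}-\pi_0^{\otimes D}$, and the descent to the $k$-skeleton all check out), but the norm estimate has a genuine gap: the claim that the $\pi_0$ slots have $\ell^1$ operator norm at most $1$ is false. The identity $K_0d=\mathrm{id}-\pi_0$ forces $\pi_0 f=f(0)\cdot\mathbf{1}$, the \emph{constant} cochain, so $\pi_0$ sends the single-vertex cochain $\mathbf{1}_0$ to a cochain supported on all $S+1$ vertices; its $\ell^1\to\ell^1$ norm is $S+1$, not $1$. Hence the summand $\pi_0^{\otimes(i-1)}\otimes K_0\otimes\mathrm{id}^{\otimes(D-i)}$ has norm of order $S^{i}$, and on a degree-$j$ cochain whose first $D-j$ tensor factors sit at the vertex $0$ your $K$ produces output of size $\sim S^{D-j+1}$ per unit of $\|\alpha\|_1$. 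This is not rescued by restricting to coboundaries: for $D=2$, $j=1$, take $\alpha=d\mathbf{1}_{(0,0)}$, the coboundary of the corner vertex, so $\|\alpha\|_1=2$; a direct computation gives $K\alpha=\mathbf{1}+\mathbf{1}_{(0,0)}$, the indicator of the complement of the origin, with support $(S+1)^2-1\sim S^2$, even though a filling of size $1$ exists. So as written the argument does not yield $Fill(j)\le C(D,k)S$.

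The missing ingredient is exactly the one the paper uses: the cone point must be chosen generically for the given cochain, not fixed at a corner. The paper Poincar\'e-dualizes $Fill(j)$ into a filling inequality for relative $(D-j)$-cycles in the dual grid (its Proposition 2.8), fills by coning from a \emph{random} point and bounds the average cone volume by $C(D,j)S|a|$ (the Federer--Fleming averaging), then returns to cubical chains via the deformation theorem. Your construction can be repaired in the same spirit: for each basepoint $x\in\{0,\ldots,S\}^D$ define $K^{(x)}=\sum_{i}\pi_{x_1}\otimes\cdots\otimes\pi_{x_{i-1}}\otimes K_0^{(x_i)}\otimes\mathrm{id}^{\otimes(D-i)}$, so that $dK^{(x)}+K^{(x)}d=\mathrm{id}-\pi_x^{\otimes D}$; the spreading factor $(S+1)^{i-1}$ in the $i$-th summand acts on a given cell only for the $(S+1)^{-(i-1)}$ fraction of basepoints whose first $i-1$ coordinates match that cell, so the average over $x$ of $\|K^{(x)}\alpha\|_1$ is $\le DS\,\|\alpha\|_1$, and some $x$ does at least as well as the average. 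That randomized version is correct, but it is precisely the Federer--Fleming averaging argument in cochain language, i.e.\ the key step of the paper's proof rather than an alternative to it.
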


Plugging Proposition 2.7 into the Width-and-filling inequality (**), we see that
the combinatorial width of $X^{k,D}(S)$ is at least $c(D,k) S^{D-k}$ as 
desired.

If $X^k$ is the k-skeleton of a D-manifold with a polyhedral structure, then we can apply Poincare duality
and relate $Fill(j)$ to a homological filling for $(D-j)$-cycles in the manifold.  Let $Y$ denote the
dual polyhedral structrure.  Then $Fill(j)$ can equivalently be defined as follows:
$Fill(j)$  is the smallest number so that every boundary $a \in C_{D-j}(Y, \mathbb{Z}_2)$ has
 a filling $b \in C_{D-j+1}(Y, \mathbb{Z}_2)$ with $\partial b = a$ and $Vol(b) \le Fill(j) Vol(a)$.
 Here $Vol(b)$ denotes the number of faces in the support of $b$.  This kind of filling inequality
 is a combinatorial version of an isoperimetric inequality.

The complexes $X^{k,D}(S)$ are the k-skeleta of a polyhedral structure on the D-cube.  The dual
polyhedral structure, $Y$, is also a cubical grid.  It has one vertex at the center of each
D-cell in the original grid, and so on.  A j-dimensional cocycle in $X^{j,D}(S)$ corresponds to
a (D-j)-dimensional relative cycle in $Y$.

So after applying Poincare duality, the filling constant bound $Fill(j) \le C(D,k) S$ for $X^{k,D}(S)$
is equivalent to the following estimate.

\begin{prop} Let $Y$ denote a cubical grid in $D$-dimensional space with side length $S$.  Suppose
that $a$ is a relative (D-j)-cycle in the grid consisting of $|a|$ faces.  Then $a$ is the (relative) boundary of
a (D-j+1)-chain $b$ in the grid consisting of $\le c(D,k) S |a|$ faces.
\end{prop}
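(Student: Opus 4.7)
My plan is to prove Proposition~2.8 by induction on the dimension $D$, using the cubical prism (chain homotopy) construction in a single coordinate direction. The base case $D = 1$ is essentially trivial: on the interval $[0,S]$, a relative $0$-cycle is just a $\mathbb Z_2$-sum of vertices (the boundary of a $0$-chain vanishes automatically), and each vertex can be joined to the nearest endpoint by at most $S$ edges, yielding $|b| \le S|a|$.

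For the inductive step, let $P = P_1$ be the cubical prism operator in the $-x_1$ direction: a face with $x_1 = c_1$ fixed is sent to the slab of $c_1$ translates connecting it to $\{x_1 = 0\}$, while a face with $x_1$ free is sent to $0$. It satisfies the standard chain-homotopy identity $\partial P + P\partial = \operatorname{id} + \pi_1$ modulo $2$, with $\pi_1$ the projection onto $\{x_1 = 0\}$. Setting $b_1 = P(a)$, we get $\partial b_1 = a + \pi_1(a) + P(\partial a)$. The term $\pi_1(a)$ lies in $\{x_1=0\} \subset \partial Y$, which is acceptable. Decompose $\partial a \subset \partial Y$ by which face it lies on: the $\{x_1=0\}$ contribution is killed by $P$, and the contributions from the lateral faces $\{x_r = 0,S\}$ with $r \ne 1$ remain inside those lateral faces (hence in $\partial Y$). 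The only problematic term is the slab $P(\alpha)$, where $\alpha = \partial a \cap \{x_1 = S\}$, which cuts across the interior of $Y$.

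The key structural observation is that $\alpha$ is itself a relative $(D-j-1)$-cycle in the $(D-1)$-cube $F = \{x_1 = S\}$: this follows from $\partial^2 a = 0$ by noting that $\partial(\partial a \setminus \alpha)$ meets $F$ only along $\partial F$, forcing $\partial \alpha \subset \partial F$. By the inductive hypothesis applied in $F$ (which has the same side length $S$), there is a $(D-j)$-chain $\beta$ in $F$ with $\partial \beta \equiv \alpha$ modulo $\partial F$, and $|\beta| \le c(D-1,k)\, S\, |\alpha|$. Now form the correction chain $b_2 = \beta \times [0,S]$, the slab swept out by $\beta$ as it translates across $x_1$. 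A direct boundary computation using the Leibniz rule for products yields $\partial b_2 = P(\alpha) + (\text{chain in } \partial Y)$, so that $b = b_1 + b_2$ satisfies $\partial b = a + (\text{chain in }\partial Y)$, as required.

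The main obstacle will be the volume bound. The crude estimate gives $|b| \le S|a| + S|\beta| \le S|a| + c(D-1,k)\, S^2\, |\alpha|$, which is $O(S^2|a|)$ rather than the desired $O(S|a|)$. To recover the linear-in-$S$ bound with a constant independent of $S$, one must control $|\alpha|$ by $|a|/S$. The idea is that if $a \not\subset \partial Y$ (the only nontrivial case, since $a \subset \partial Y$ admits $b = 0$), then each face of $\alpha$ sits on the "far" face $\{x_1 = S\}$ and, to be part of $\partial a$, must be reached by a sub-chain of $a$ whose volume is comparable to the distance of that face from the rest of $\partial Y$; summing these disjoint contributions gives $S|\alpha| \lesssim |a|$, so the correction term is $O(S|a|)$. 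Making this distance/reach argument precise in general dimensions — or, equivalently, choosing the push direction $x_1$ carefully (possibly differently on different pieces of $a$) so that $\alpha$ is automatically small — is the heart of the proof.
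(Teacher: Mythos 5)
Your chain-homotopy bookkeeping is correct (the prism identity $\partial P + P\partial = \mathrm{id} + \pi_1$ mod $2$, and the observation that $\alpha = \partial a \cap \{x_1 = S\}$ is a relative cycle in the far face), but the proof is incomplete exactly where you flag it, and the route you propose for closing it does not work. The inequality you say you need, $|\alpha| \lesssim |a|/S$, is simply false for relative cycles: a relative cycle can reach the far face $\{x_1 = S\}$ while staying within distance $O(1)$ of another boundary face. For example, in $D=2$ let $a$ consist of the two edges joining $(S-1,0)$ to $(S-1,1)$ to $(S,1)$; this is a relative $1$-cycle with $|a| = 2$, not contained in $\partial Y$, yet $\alpha \neq 0$, so no bound of the form $S|\alpha| \le C|a|$ can hold. (Faces of $a$ lying inside $\{x_1=S\}$ itself produce boundary there at no interior cost at all, so excluding only the case $a \subset \partial Y$ does not help.) The heuristic ``each face of $\alpha$ is reached by a sub-chain of $a$ of volume comparable to its distance from the rest of $\partial Y$'' only yields $|a| \gtrsim 1$ per face near an edge or corner of the cube, not $\gtrsim S$. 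Consequently your estimate stalls at $|b| \le S|a| + S|\beta| \le S|a| + c\,S^2|\alpha| = O(S^2|a|)$, one factor of $S$ worse than the statement; and the alternative you mention --- choosing the push direction differently on different pieces of $a$ --- is not accompanied by any argument (nor is it clear how to split $a$ while keeping the pieces relative cycles). So this is a genuine gap, not a technicality.

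For comparison, the paper avoids this difficulty entirely: it does not induct on dimension or push along a coordinate axis, but cones radially from a \emph{random} interior point $x \in Y$, filling $a$ by the outward radial shadow $[C^+(a) - C(a)] \cap Y$. This chain is automatically a relative filling (its extra boundary lies in $\partial Y$), so there is no correction term to control, and the Federer--Fleming averaging over $x$ gives expected volume $\le C(D,j)\, S\, |a|$ directly --- the single factor of $S$ comes out of the average. The deformation theorem then replaces this non-cubical chain by a cubical chain $b$ with $\partial b = a$ (relatively) and comparable volume. If you want to salvage your coordinate-direction scheme you would need an analogous averaging (say over the $2D$ push directions, or over a random splitting of the cube) to kill the $S^2$ term; the simpler repair is to switch to the cone-from-a-random-point construction.
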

 
This proposition is a combinatorial version of the standard Federer-Fleming pushout argument.
(See \cite{Gu2} for the details of the standard argument.)  The combinatorial version is an
easy consequence of the standard pushout argument and the deformation theorem.
 
\proof  Let $x$ be a random point (not necessarily a vertex) in the cube $Y$.  We 
fill $a$ by a geometric (but not cubical) chain by pushing out from the point $x$.  This is defined
in the following way.  Let $C(a)$ denote the cone consisting of all the line segments from $x$
to points on $a$.  Let $C^+(a)$ denote the infinite cone, consisting of all the rays from $x$ through
a point of $a$.  We can fill $a$ by the relative chain given by $[C^+(a) - C(a)] \cap Y$.

By an averaging argument invented by Federer and Fleming, one sees that the average volume of
this chain, as $x$ varies in the cube $Y$, is bounded by $C(D,j) S |a|$.  Let $b_0$ be one of these
chains with at most average volume.

The chain $b_0$ is not a cubical chain.  We finish the argument by approximating $b_0$ with a cubical
chain $b$.  The deformation theorem allows us to approximate $b_0$ by a cubical chain $b$ so
that $\partial b = \partial b_0 = a$, and with $Vol(b) \le C(D,j) Vol(b_0)$.

This chain $b$ obeys the conclusion of the proposition. \endproof

\subsection{Open questions: triangulated manifolds}

There are many open questions about the case of triangulated manifolds.  
Suppose $X$ is a triangulated k-manifold with $\lesssim 1$ simplices
containing any vertex and with $N$ simplices total.  If $n \ge 2k+1$, then 
we can use the embedding construction from Section 2.1 to
embed $X$ with thickness 1 into
$B^n(R)$ for $R \lesssim C_\epsilon N^{\frac{1}{n-k} + \epsilon}$.  We don't know
whether this is sharp. 

The combinatorial width method used in Section 2.2 does not work directly for manifolds.  In
Section 2.9f of \cite{Gr2}, it is proven that the combinatorial width of a k-manifold $X^k$
over $\mathbb{R}^k$ is always $\le 4k$.
One may still get some estimates by considering lower-dimensional skeleta of the manifold.  A thick
embedding of $X^k$ into $\mathbb{R}^n$ restricts to a thick embedding
of any skeleton of $X^k$.

Now we consider some examples.

\vskip10pt

{\bf Example 0.} If $k=1$, then a triangulated closed k-manifold is just a union of circles, each circle
divided into some number of edges.  If the total number of edges is $N$, then $X^1$ embeds with
combinatorial thickness 1 into $B^n(R)$ for $R \sim N^{1/n}$ in any dimension $n \ge 2$.  The case
$k=1$ is especially simple, but it may still suggest that manifolds are easier to embed than general
complexes.

\vskip10pt

{\bf Example 1.}  Consider a family of arithmetic hyperbolic surfaces, 
$\Sigma_i$ given as coverings of a surface $\Sigma_0$.  We let $G_i \rightarrow
+ \infty$ denote the genus of $\Sigma_i$.  These surfaces have natural triangulations, given by picking
a triangulation of $\Sigma_0$ and then lifting it to the $\Sigma_i$.  What is the smallest radius
so that the triangulated manifold $\Sigma_i$ embeds into $B^n(R)$ with combinatorial thickness
1?  Theorem 2.1 guarantees that such embedding exists with $R \sim G_i^{\frac{1}{n-2}}$ for $n \ge 5$.

On the other hand,  the 1-skeleton of our triangulation of $\Sigma_i$ is an
expander graph with $\sim G_i$ vertices.  The expander properties of arithmetic hyperbolic
manifolds are discussed more in the appendix.  An embedding of $\Sigma_i$ with combinatorial
thickness 1 restricts to give an embedding of the 1-skeleton with combinatorial thickness at least
1.  Therefore, the radius $R$ above must be $\gtrsim G_i^{\frac{1}{n-1}}$.  Moreover, there is
an actual embedding of the 1-skeleton into $B^n(R)$ with combinatorial thickness 1 and 
$R \sim G_i^{\frac{1}{n-1}}$ for $n \ge 3$.  Can we extend this embedding to all of $\Sigma_i$?

(One should remember here that the combinatorial thickness of an embedding $\Sigma_i \rightarrow
\mathbb{R}^n$ depends on the choice of triangulation.  Two different triangulations of the same surface
can have very different properties.  One may find other triangulations of a surface of genus $G_i$ that embed
with combinatorial thickness 1 into a ball $B^n(R)$ of radius $R \lesssim G_i^{1/n}$.  In the next
section, we will study the retraction thickness of embeddings $\Sigma \rightarrow \mathbb{R}^n$.  The retraction
thickness does not depend on a triangulation of $\Sigma$.)

\vskip10pt

{\bf Example 2.}  Let $k = 2j$.  Begin with the j-dimensional skeleton
of a $D$-dimensional lattice with grid size $S$: $X^{j,D}(S)$.  We fix $D$
large, and then send $S \rightarrow \infty$.  This complex
has $\sim S^D$ simplices.  Embed this
complex into $\mathbb{R}^{k+1}$.  Take a small tubular neighborhood,
and let $X^k$ be the boundary of this neighborhood, which is a manifold.
We can triangulate $X^k$ so that $X^{j,D}(S)$ appears inside the j-skeleton
of the triangulation, and also the triangulation has bounded local degree
and $N \sim S^D$ simplices.  By the lower bound in the last subsection,
a 1-thick embedding of $X^k$ into $\mathbb{R}^n$ has 1-neighborhood
with volume at least $N^{\frac{n}{n-j}}$.

\vskip10pt

If $X^k$ is a simplicial complex, we define the combinatorially-thick embedding radius of 
$X$ in $\mathbb{R}^n$ to be the smallest $R$ so that $X$ embeds
with combinatorial thickness 1 into $B^n(R)$.

\vskip10pt

{\bf Question 1.} If $X^k$ is a triangulated manifold with $N$ simplices
and each vertex in $\lesssim 1$ simplices, what can we say about the
combinatorially-thick embedding radius of $X^k$?  
Is any $X^k$ significantly harder to embed than
Example 2 above?

\vskip10pt

{\bf Question 2.} If $X^k$ is a triangulated manifold with $N$ simplices
and each vertex in $\lesssim 1$ simplices, what can we say about the
combinatorial thickness of $X^j$ over $\mathbb{R}^j$?  Here $1 \le j \le k$,
and $X^j$ denotes the j-skeleton of $X$.

\vskip10pt

If $1 \le j \le k/2$, then Example 2 shows that $X^j$ may have combinatorial
thickness $\gtrsim N^{1-\epsilon}$, which is nearly optimal.  If $j = k$, then
the result of Sec 2.9f of \cite{Gr2} says that the combinatorial width is
$\le 4k $ regardless of $N$.  For $k/2 < j < k$, the situation is unclear.
This $X^j$ may be the j-skeleton of a k-dimensional grid, and the results
in Section 2.2 show that the combinatorial width of such $X^j$ over
$\mathbb{R}^j$ is $\gtrsim N^{\frac{k-j}{k}}$.  

\vskip10pt

{\bf Question 3.} If $X^k$ is a triangulated manifold with $N$ simplices
and each vertex in $\lesssim 1$ simplices, what can we say about the
filling constants $Fill(j)$ of $X^k$?

\vskip10pt

If $1 \le j \le k/2$, then Example 2 shows that $Fill(j)$ may be $\lesssim N^\epsilon$.
On the other hand, for any complex $Fill(j) \gtrsim 1$, so this upper bound is
nearly sharp.  For $j > k/2$, the situation is unclear.  The $X^j$ may be
the j-skeleton of a cubical grid of side length $N^{1/k}$.  
In this case, the results of Section 2.2 show that $Fill(j) \lesssim N^{1/k}$.  

Also, if $X$ is connected and $H^j(X, \mathbb{Z}_2) = 0$ for all $1 \le j \le
k-1$, then the width-and-filling inequality cited in Section 2.2 applies.  Since
the combinatorial width of $X$ over $\mathbb{R}^k$ is only $4k$, we
get the following estimate:

$$N \le C(k) \prod_{j=1}^k Fill(j). $$

\vskip10pt

Finally, if $X$ is a k-manifold, it embeds into $\mathbb{R}^{2k}$ by 
the Whitney embedding theorem.  It would be interesting to prove quantitative
estimates for such an embedding.

\section{Estimates for retraction thickness}

Suppose that $X$ is a CW-complex embedded in $\mathbb{R}^n$.  We say that the retraction thickness of
$X$ is at least $T$ if the $T$-neighborhood of $X$ retracts to $X$.  In this section, we study how the retraction
thickness relates to the topology of $X$.  Suppose for example that $X$ is embedded in $\mathbb{R}^n$ with
retraction thickness 1 and that the 1-neighborhood of $X$ has volume $V$.  Based on this information, what
can we conclude about the topology of $X$?

\subsection{Constructing emdeddings}

The quantitative general position arguments from Section 2.1 can also be used to construct embeddings with controlled retraction thickness.

\newtheorem*{theorem3.1}{Theorem 3.1}

\begin{theorem3.1} If $X^k$ is a k-dimensional simplicial complex with $N$ simplices and each vertex lies in
$\le L$ simplices, and if $n \ge 2 k + 1$, then there is an embedding with retraction thickness 1 from $X$ 
into $B^n(R)$ for $R \le C(n,L, \epsilon) N^{\frac{1}{n-k} + \epsilon}$.  
\end{theorem3.1}

The proof of Theorem 3.1 is a technical modification of the proof of Theorem 2.1.  We give the proof in Section 3.4.

For combinatorial thickness, Theorem 2.1 is extremely sharp.  But we don't know how sharp Theorem 3.1 is.  When $k=1$, we will see that Theorem 3.1 is far from sharp.  We don't know what happens when $k \ge 2$.

When we first defined retraction thickness, we expected it to be a minor variation of the combinatorial thickness
discussed in Section 2.  But it turns out to be quite different.  For example, an expander with $N$ vertices 
embeds with combinatorial thickness 1 into $B^3(R)$ only for $R \gtrsim N^{1/2}$, but it embeds with retraction thickness 1 into 
$B^3(R)$ for $R \sim N^{1/3}$.  From the point of view of combinatorial thickness, an expander is much more complicated
than a grid (with the same number of vertices).  But from the point of view of retraction thickness, an expander and a grid are
basically equivalent.  The underlying reason is that an expander is homotopy equivalent to a grid with roughly the same number
of vertices.  The significance of homotopy equivalence appears in the following proposition.

\begin{prop} Let $X^k$ and $Y^k$ be homotopy equivalent CW-complexes of dimension $k$.  Suppose that $n \ge 2k + 1$,
and suppose that $Y$ embeds into $\mathbb{R}^n$ with retraction thickness $T$.  Then $X$ embeds into $\mathbb{R}^n$
with retraction thickness at least $T - \epsilon$ for any $\epsilon > 0$.
\end{prop}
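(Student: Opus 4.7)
The plan is to embed $X$ as a $C^0$-small perturbation of the composition $I_Y \circ f$, where $I_Y \colon Y \hookrightarrow \mathbb{R}^n$ is the given embedding with retraction thickness $T$ and $f \colon X \to Y$ is a homotopy equivalence with homotopy inverse $g \colon Y \to X$. I will then manufacture a retraction onto the embedded copy of $X$ by pulling back the given retraction onto $Y$ via $g$. First I would apply general position: since $\dim X = k$ and $n \ge 2k+1$, a standard argument lets me perturb $I_Y \circ f$ to a PL embedding $I_X \colon X \to \mathbb{R}^n$, and the perturbation may be taken so $C^0$-small that, for any prescribed $\delta > 0$, the image $I_X(X)$ lies in $N_\delta(I_Y(Y))$ and the straight-line homotopy from $I_X$ to $I_Y \circ f$ stays inside $N_\delta(I_Y(Y))$. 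Fix $\delta < \epsilon$.

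Let $r \colon N_T(I_Y(Y)) \to I_Y(Y)$ denote the given retraction and consider
\[
\phi := I_X \circ g \circ I_Y^{-1} \circ r \colon N_T(I_Y(Y)) \longrightarrow I_X(X).
\]
Identifying $I_X(X)$ with $X$ via $I_X$, the restriction $\phi|_{I_X(X)}$ corresponds to $g \circ I_Y^{-1} \circ r \circ I_X \colon X \to X$. Feeding the straight-line homotopy from $I_X$ to $I_Y \circ f$ through $g \circ I_Y^{-1} \circ r$ produces a homotopy ending at $g \circ I_Y^{-1} \circ r \circ I_Y \circ f = g \circ f$, which is in turn homotopic to $\mathrm{id}_X$. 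Hence $\phi|_{I_X(X)} \simeq \mathrm{id}_{I_X(X)}$. Taking $I_X$ to be PL ensures the inclusion $I_X(X) \hookrightarrow N_T(I_Y(Y))$ is a cofibration, so the homotopy extension property lets me deform $\phi$ through maps into $I_X(X)$ to a genuine retraction $\tilde\phi \colon N_T(I_Y(Y)) \to I_X(X)$ that fixes $I_X(X)$ pointwise. Since $I_X(X) \subset N_\delta(I_Y(Y))$ with $\delta < \epsilon$, we have $N_{T-\epsilon}(I_X(X)) \subset N_T(I_Y(Y))$, and restricting $\tilde\phi$ yields the required retraction of $N_{T-\epsilon}(I_X(X))$ onto $I_X(X)$.

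The geometric general-position step and the formal homotopy-extension step are both routine; the essential content sits in the middle step, where the homotopy equivalence $g \circ f \simeq \mathrm{id}_X$ is used to check that the candidate map $\phi$ restricts to something homotopic to the identity on $I_X(X)$. The loss of $\epsilon$ in the conclusion corresponds exactly to the slack $\delta$ by which $I_X(X)$ must be displaced off of $I_Y(Y)$ in order to be an embedding rather than the (possibly non-injective) map $I_Y \circ f$; this displacement seems unavoidable by this method, so I would not expect to remove the $\epsilon$ without a different argument.
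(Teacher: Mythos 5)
Your proposal is correct and follows essentially the same route as the paper: perturb the composition of the homotopy equivalence with the given embedding to an embedding via general position ($n \ge 2k+1$), compose the given retraction with the homotopy inverse, verify the restriction to the embedded copy of $X$ is homotopic to the identity using the small homotopy back to the unperturbed map, and then upgrade the homotopy retraction to a genuine retraction by the homotopy extension property. The only cosmetic difference is that you define your map on all of $N_T(I_Y(Y))$ and restrict at the end, while the paper works directly on the $(T-\epsilon)$-neighborhood of the perturbed image.
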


\proof We consider $Y$ as a subset of $\mathbb{R}^n$.  Let $\Psi: X \rightarrow Y$ be a homotopy equivalence.  We can think of
$\Psi$ as a map $X \rightarrow \mathbb{R}^n$.  Now $\Psi$ may not be an embedding, but since $n \ge 2k + 1$, we can 
perturb $\Psi$ slightly to an embedding $\Psi'$.  Since the perturbation is slight, we may arrange that the $(T - \epsilon)$-neighborhood
of $\Psi'(X)$ lies inside of the $T$-neighborhood of $Y$.

We know that there is a retraction $R$ from the $T$-neighborhood of $Y$ to $Y$.  Since $\Psi$ is a homotopy equivalence, we also
know that there is a map $\Phi: Y \rightarrow X$ so that $\Phi \circ \Psi: X \rightarrow X$ is homotopic to the identity.  We consider
the map $\Phi \circ R$ which goes from the $(T-\epsilon)$-neighborhood of $\Psi'(X)$ to $X$.  We claim that this map is a homotopy
retraction.  In other words, when we restrict the map to $X = \Psi'(X)$, we get a map homotopic to the identity.  To see this,
notice that $\Psi'$ is homotopic to $\Psi$ inside the $(T-\epsilon)$ neighborhood.  So the restriction of our map to $X$ is homotopic
to $\Phi \circ R \circ \Psi$.  But $\Psi$ maps $X$ into $Y$, and $R$ is the identity on $Y$, so $\Phi \circ R \circ \Psi$ is equal
to $\Phi \circ \Psi$ which is homotopic to the identity.

We have now proven that the $(T - \epsilon)$-neighborhood of $\Psi'(X)$ homotopy retracts to $\Psi'(X)$ by the map $\Phi \circ R$.
If we restrict this map to $\Psi'(X)$, then it is homotopic to the identity.  By the homotopy extension theorem, we can extend this homotopy
to get a retraction from the $(T - \epsilon)$-neighborhood of $\Psi'(X)$ to $\Psi'(X)$.  In other words, $X$ is embedded in 
$\mathbb{R}^n$ with retraction thickness at least $T - \epsilon$. \endproof

We pause here to recall some results we use about homotopy retractions and the homotopy extension property.  First, we recall the definition
of a homotopy retraction.  If $A \subset B$, then a map $F: B \rightarrow A$ is a homotopy retraction if the restriction $F: A \rightarrow A$
is homotopic to the identity.  A retraction is of course a special case of a homotopy retraction.  On the other hand, if $A$ is a finite
simplicial complex or CW complex topologically embedded in $\mathbb{R}^n$, and $B$ is a neighborhood of $A$, then the pair $(B,A)$ has the homotopy
extension property.  This means that if $g: B \rightarrow Y$ is a continuous map to any space $Y$, and $h: A \times [0,1] \rightarrow Y$
is a homotopy with $h(a,0) = g(a)$ for all $a \in A$, then the homotopy $h$ can be extended to $G: B \times [0,1] \rightarrow Y$ with
$G(b,0) = g(b)$ for all $b \in B$.  (See for example sections 2.9-2.10 in \cite{NR}.)  In particular, any homotopy retraction $B \rightarrow A$
may be homotoped to a genuine retraction $B \rightarrow A$.  When we have to construct a retraction, it can be easier to first construct a
homotopy retraction and then use this method to move it to a real retraction.  We did this in the proof of Proposition 3.1, and we will do it
again when we prove Theorem 3.1 in Section 3.4.  The theorem about the homotopy extension property cited above has a fairly
short proof, but we could also avoid using it by talking throughout about homotopy retractions and ``homotopy retraction thickness".  
Everything we prove about retraction thickness below applies just as well with homotopy retractions in place of retractions.

As a corollary of Proposition 3.1, we see that expanders embed with retraction thickness 1 into rather small balls.

\begin{prop} Let $\Gamma$ be any graph with degree $\lesssim 1$ and $N$ vertices.  Then $\Gamma$ may
be embedded with retraction thickness 1 into $B^3(R)$ for $R \lesssim N^{1/3}$.
\end{prop}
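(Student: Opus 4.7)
The plan is to apply Proposition 3.1 to reduce the problem to constructing a convenient homotopy model $Y$ of $\Gamma$, then to embed $Y$ into $B^3(R)$ via the Kolmogorov-Barzdin folding trick from Section 1.1. Since $\Gamma$ has bounded degree and $N$ vertices, it has $\lesssim N$ edges, so the first Betti number $b_1$ of each connected component of $\Gamma$ is $\lesssim N$. The homotopy type of a graph is determined by its connected components and their first Betti numbers, so we are free to replace $\Gamma$, up to homotopy equivalence, by any graph with the same number of components and the same sequence of Betti numbers.

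First I would build a planar homotopy model $Y$. For each component $\Gamma_i$, take a square subgrid of a unit lattice in $\mathbb{R}^2$ containing $\sim b_1(\Gamma_i)$ elementary squares; this is a planar graph of first Betti number $b_1(\Gamma_i)$ occupying a planar region of area $\sim b_1(\Gamma_i)$. Placing these subgrids side by side in $\mathbb{R}^2$ yields a planar graph $Y$, homotopy equivalent to $\Gamma$, which embeds in the plane with combinatorial thickness $\gtrsim 1$ inside a rectangular region of total area $\lesssim N$.

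Next I would apply the folding construction of Section 1.1: a planar rectangular slab of area $A$ (together with its unit neighborhood) embeds into the $3$-ball $B^3(R)$ of radius $R \sim A^{1/3}$ via a locally bilipschitz map $\Psi$ built from accordion folds in two perpendicular directions. With $A \lesssim N$ this gives $R \lesssim N^{1/3}$. A careful choice of folds keeps distinct sheets at distance $\gtrsim 1$ in $\mathbb{R}^3$, so the $1$-neighborhood of $\Psi(Y)$ in $\mathbb{R}^3$ is, up to a bounded bilipschitz factor, equal to the image under $\Psi$ of the unit planar neighborhood of $Y$. Since the planar unit neighborhood of $Y$ obviously retracts to $Y$, composing with $\Psi$ (and a final bounded rescaling to restore thickness $1$) yields a retraction of the unit neighborhood of $\Psi(Y)$ onto $\Psi(Y)$. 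Thus $Y$ embeds with retraction thickness $\ge 1$ into $B^3(R)$ for $R \lesssim N^{1/3}$, and Proposition 3.1 (applied with $k=1$, $n=3$, so $n \ge 2k+1$) transfers this embedding to $\Gamma$.

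The main obstacle is the last claim: ensuring the folding preserves retraction thickness. If two sheets of the folded slab came within distance $1$ in $\mathbb{R}^3$ without being joined along a fold line, the $1$-neighborhood of $\Psi(Y)$ could pick up extra loops and fail to retract to $\Psi(Y)$. This is handled by fixing the accordion amplitudes to be comparable to $N^{1/3}$ and the spacings between parallel sheets to be a definite constant (which we rescale to be $\ge 2$), so that the only identifications between sheets happen along the intended fold lines.
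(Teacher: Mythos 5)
Your proposal is correct and follows essentially the same route as the paper: replace $\Gamma$ by a homotopy-equivalent planar grid-type graph, embed that grid via the Kolmogorov--Barzdin folding of Section 1.1 into $B^3(R)$ with $R \lesssim N^{1/3}$ while keeping retraction thickness $\gtrsim 1$, and then transfer to $\Gamma$ by Proposition 3.1. Your extra care about sheets of the fold staying $\gtrsim 1$ apart is exactly the point the paper dismisses as ``straightforward to check,'' so there is no substantive difference.
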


\begin{proof} It's straightforward to reduce to the case that $\Gamma$ is connected.  
If $\Gamma$ is connected, then it is homotopy equivalent to any other graph with the same rank of $H^1$.
In particular, $\Gamma$ is homotopy equivalent to a subgraph of a grid with $\lesssim N$ vertices.
As we saw in Section 1, such a grid can be embedded with combinatorial thickness 2 into a ball of
radius $\lesssim N^{1/3}$.  It's straightforward to check that the embedding we constructed in Section 1 also
has retraction thickness at least 2.  By Proposition 3.1, we get an embedding of $\Gamma$ with
retraction thickness 1 into a ball of radius $\sim R^{1/3}$. \end{proof}

Proposition 3.2 turns out to be sharp, as we will see in the next section.

\subsection{Estimates for rank of homology and simplicial volume}

Suppose that $X$ is a CW-complex embedded in $\mathbb{R}^n$ with retraction thickness $T$.
Let $V$ denote the volume of the $T$-neighborhood of $X$.  Then $V T^{-n}$ is a scale-invariant
quantity that describes the geometric complexity of the embedding.  As we will see, this geometrical
complexity controls some aspects of the homotopical complexity of $X$.  In this subsection, we
show that $V T^{-n}$ controls the homology groups of $X$ and the simplicial volume of $X$.

\begin{prop} Suppose $X$ is a CW-complex embedded in $\mathbb{R}^n$ with retraction thickness
$T$.   Let $V$ be the volume of the $T$-neighborhood of $X$.  Then
the following inequality holds.

$$ Rank H_*(X) \lesssim V T^{-n}. $$

\end{prop}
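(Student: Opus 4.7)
The plan is to sandwich $X$ between a finite cubical sub-complex $Q$ and the $T$-neighborhood $U := N_T(X)$ so that $X \subset Q \subset U$ and $X$ is a retract of $Q$. Since the homology rank of $Q$ will be easy to bound by its cell count, and that cell count is controlled by $V T^{-n}$, the desired bound follows.

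First, I would fix a cubical grid on $\mathbb{R}^n$ with lattice spacing $\varepsilon := T/(2\sqrt{n})$, so every closed grid cube has diameter $\le T/2 < T$. Let $Q$ be the union of all closed cubes of the grid that are entirely contained in $U$. The key inclusion $X \subset Q$ follows from the diameter condition: if $x \in X$ and $C$ is any cube of the grid containing $x$, then every $y \in C$ satisfies $|y-x| < T$, so $C \subset B(x,T) \subset U$, and hence $C \subset Q$. Since each top-dimensional cube in $Q$ has volume $\varepsilon^n$ and distinct top cubes have disjoint interiors, the number of top cubes is at most $V/\varepsilon^n = C(n)\, V T^{-n}$. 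Each top cube carries at most $3^n$ faces of all dimensions, so the total number of cells of $Q$ (regarded as a CW-complex) is $\lesssim V T^{-n}$. Because the total Betti number of a CW-complex is bounded by its total number of cells, $\operatorname{Rank} H_*(Q) \lesssim V T^{-n}$.

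It remains to transfer this bound to $X$. By the hypothesis of retraction thickness $T$, there is a retraction $r: U \to X$. Since $X \subset Q \subset U$ and $r$ is the identity on $X$, the restriction $r|_Q : Q \to X$ is a retraction. Consequently the composition $H_*(X) \to H_*(Q) \to H_*(X)$ induced by inclusion and $r|_Q$ is the identity, so $H_*(X)$ is a direct summand of $H_*(Q)$. Combining this with the cell-count bound gives $\operatorname{Rank} H_*(X) \le \operatorname{Rank} H_*(Q) \lesssim V T^{-n}$.

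There is no substantial obstacle here; the argument is essentially bookkeeping once the right intermediate space $Q$ is identified. The only points requiring a small amount of care are choosing the grid spacing so that cubes meeting $X$ remain inside $U$ (handled by taking $\varepsilon < T/\sqrt{n}$), and the standard fact that a retract has its homology sitting as a direct summand in the homology of the ambient space.
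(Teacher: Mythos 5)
Your argument is correct, but it takes a genuinely different route from the paper. You build an \emph{inner} cubical approximation: a finite subcomplex $Q$ of a grid of spacing $T/(2\sqrt n)$ with $X \subset Q \subset N_T(X)$, restrict the given retraction $r: N_T(X) \to X$ to $Q$, and observe that $r|_Q \circ i = \mathrm{id}_X$ on the nose, so $i_*: H_*(X) \to H_*(Q)$ is injective and the Betti numbers of $X$ are bounded by the cell count of $Q$, which is $\lesssim V T^{-n}$ by the disjoint-interiors volume count. The paper instead covers a neighborhood of $X$ by $\lesssim V T^{-n}$ balls of radius $T/2$ centered at a maximal $T/2$-separated set, passes to the nerve $N$ of the cover (using that intersections of balls are convex, so the nerve map is a homotopy equivalence), and factors the identity of $X$ through $N$ \emph{up to homotopy} via $R \circ \Psi \circ \Phi$, concluding $\mathrm{Rank}\, H_*(X) \le \mathrm{Rank}\, H_*(N) \lesssim V T^{-n}$. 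Your version is more elementary: no nerve lemma, no homotopy inverse, and the factorization of the identity is exact rather than up to homotopy; the only care needed is the grid-spacing choice guaranteeing $X \subset Q$, which you handle. What the paper's nerve construction buys is reusability: the same setup (map to a controlled simplicial complex through which the identity factors) is exploited again for the simplicial-norm bound in Proposition 3.4 and, with slab-by-slab control, in the proof of Theorem 3.2, whereas your cubical $Q$ would have to be triangulated and re-analyzed to serve those later purposes. For the statement at hand, both proofs are complete and give the same bound up to constants.
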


\begin{proof} We write $N_W(X)$ to denote the $W$-neighborhood of $X$.

Let $p_1, p_2, ...$ be a maximal set of $T/2$-separated points in $N_{T/2}(X)$.
The balls $B(p_i, T/2)$ cover $N_{T/2}(X)$.  Since the balls $B(p_i, T/4)$ are disjoint
and lie within $N_T(X)$, we see that the number of points is $\lesssim V T^{-n}$.
We let $U := \cup_i B(p_i, T/2)$.  Since $X \subset U \subset N_T(X)$,
we see that $U$ retracts to $X$.

The balls $B(p_i, T/2)$ give an open cover of $U$,
and we let $N$ denote the nerve of this cover.  
We have a natural map to the nerve $\Phi: U \rightarrow N$.  
Since the intersection of any set of balls is convex and hence contractible, the map $\Phi$ is a homotopy
equivalence of $U$ and $N$.

With this setup, we show that the identity map from $X$ to $X$ factors through $N$ up to homotopy.  We can map $X$ to $N$ by 
restricting $\Phi$ to $X$.  Now $\Phi$ has a homotopy inverse $\Psi: N \rightarrow U$.  We let $R$
denote the retraction from $U$ to $X$.  Now $\Psi \circ \Phi: U \rightarrow U$ is homotopic to the identity.
Therefore, $R \circ \Psi \circ \Phi: X \rightarrow X$ is also homotopic to the identity.

As a consequence, $Rank H_*(X) \le Rank H_*(N)$.

Our cover of $U$ involves $\lesssim V T^{-n}$ balls $B(p_i, T/2)$.  Since the balls $B(p_i, T/4)$ are disjoint, the multiplicity
of the cover is $\lesssim 1$.  Therefore the nerve $N$
contains at most $\lesssim V T^{-n}$ simplices.  Therefore, $Rank H_*(N) \lesssim V T^{-n}$.

\end{proof}

In particular, if $\Gamma$ is a grid or expander with $N$ vertices then $Rank H_1(\Gamma) \sim N$.  So if
$\Gamma$ embeds with retraction thickness 1 into $B^3(R)$, we see that $R \gtrsim N^{1/3}$.  So we see
that Proposition 3.2 is sharp.

The next proposition concerns simplicial norms modulo $p$.  If $a$ lies in $\mathbb{Z}_p$ (the integers modulo $p$),
we define $|a|$ to be zero if $a = 0$ and one if $a \not=0$.  If $h \in H_k(X, \mathbb{Z}_p)$, we define
the simplicial norm of $h$ to be the infimum of $\sum |a_j|$ over all cycles 
$\sum_j a_j f_j$ which represent $h$.  (In this formula, the $f_j$ denote maps from the k-simplex to $X$ and $a_j \in \mathbb{Z}_p$.)  
We denote the simplicial norm of $h$ by $\| h \|_{\triangle}$.  Simplicial norms over $\mathbb{R}$ are better known, but simplicial norms over $\mathbb{Z}_p$ share some
of their good properties.  In particular, the simplex straightening argument shows that for a complete hyperbolic manifold
$(M, hyp)$, for any $h \in H_k(M, \mathbb{Z}_p)$ with $k \ge 2$, we have $\| h \|_{\triangle} \gtrsim mass(h)$.
In particular, if $M$ is closed, the simplicial norm of the fundamental class $[M]$ is $\gtrsim Vol(M, hyp)$.  (See \cite{MT}
and the appendix.)

\begin{prop} Suppose $X$ is a CW-complex embedded in $\mathbb{R}^n$ with retraction
thickness $T$.  Let $V$ be the volume of the $T$-neighborhood of $X$.  
Let $h \in H_k(X, \mathbb{Z}_p)$ be any homology class.  Then the following inequality holds.

$$ \| h \|_{\triangle} \lesssim V T^{-n}. $$

\end{prop}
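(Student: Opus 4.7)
The plan is to reuse the nerve construction from the proof of Proposition 3.3 and push a homology representative through it. First I would form a maximal $T/2$-separated net $\{p_i\}$ in $N_{T/2}(X)$, set $U = \bigcup_i B(p_i, T/2)$, and take $N$ to be the nerve of the cover $\{B(p_i, T/2)\}$. Exactly as in Proposition 3.3, a packing argument gives that $N$ has $\lesssim V T^{-n}$ simplices in total (hence $\lesssim V T^{-n}$ $k$-simplices), the nerve map $\Phi: U \rightarrow N$ is a homotopy equivalence with inverse $\Psi: N \rightarrow U$, and composing with the retraction $R: U \rightarrow X$ yields a map $R \circ \Psi \circ \Phi: X \rightarrow X$ homotopic to the identity.

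Next, I would use this factorization to push $h$ through the nerve. Let $\tilde h := \Phi_\ast h \in H_k(N, \mathbb{Z}_p)$. Because the identity on $X$ is homotopic to $(R \circ \Psi) \circ \Phi$, naturality of homology gives $h = (R \circ \Psi)_\ast \tilde h$. Since $N$ is a simplicial complex, I can represent $\tilde h$ by a simplicial $k$-cycle, namely a formal sum $\sum_j a_j \sigma_j$ with $a_j \in \mathbb{Z}_p$ indexed by the $k$-simplices $\sigma_j$ of $N$. The number of nonzero terms is at most the number of $k$-simplices of $N$, which is $\lesssim V T^{-n}$. Applying $R \circ \Psi$ to each $\sigma_j$ produces a singular $k$-simplex $f_j : \Delta^k \rightarrow X$, and the singular cycle $\sum_j a_j f_j$ represents $h$. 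By definition of the simplicial norm, $\| h \|_\triangle \le \sum_j |a_j| \lesssim V T^{-n}$.

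The only point to watch is purely bookkeeping: confirming that a simplicial $\mathbb{Z}_p$-cycle on $N$, when pushed forward by a continuous map, really does give a singular $\mathbb{Z}_p$-cycle on $X$ with at most as many nonzero terms, and that this push-forward computes $(R \circ \Psi)_\ast \tilde h$ in singular homology. This is standard once one identifies simplicial and singular homology on the simplicial complex $N$, so there is no serious obstacle; the work of the proof is all already contained in Proposition 3.3. The argument does not use any special structure of $\mathbb{Z}_p$ beyond the convention $|a| \in \{0,1\}$, so the same proof would also give $\| h \|_\triangle \lesssim V T^{-n}$ for simplicial norms with coefficients in any ring where one counts nonzero summands the same way.
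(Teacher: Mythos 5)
Your argument is correct and is essentially the paper's own proof: the paper likewise factors the identity of $X$ through the nerve $N$ (as in Proposition 3.3), represents $\Phi_*h$ by a simplicial cycle with at most as many terms as $N$ has simplices, and uses monotonicity of the simplicial norm under push-forward, which is exactly what your explicit push-forward via $R\circ\Psi$ accomplishes. The bookkeeping step you flag (simplicial versus singular cycles on $N$) is standard and poses no issue.
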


\begin{proof} As in the proof of Proposition 3.3, we define the nerve $N$ and the map $\Phi: X \rightarrow N$.  
As we saw in the proof of Proposition 3.3, the identity map from $X$ to itself factors through the map $\Phi: X \rightarrow N$,
up to homotopy.

If $F: X \rightarrow Y$ is any continuous map, and $h$ is any homology class in $H_k(X, \mathbb{Z}_p)$, 
it follows from the definition of simplicial norm that $\| F_*(h) \|_{\triangle} \le \| h \|_{\triangle}$.  Since the identity map factors
through $\Phi$, it follows that $\| \Phi(h) \|_{\triangle} = \| h \|_{\triangle}$.

But we can represent $\Phi(h)$ by a sum $\sum a_j \Delta_j$ where $a_j \in \mathbb{Z}_p$ and $\Delta_j$ are
simplices of the nerve $N$.  Hence $\| h \|_{\triangle} = \| \Phi(h) \|_{\triangle}$ is at most the number of simplices in $N$.
As we saw in the proof of Proposition 3.3, the number of simplices in $N$ is $\lesssim V T^{-N}$.  \end{proof}

(Here are some further questions about bounding the homotopical complexity of $X$ in terms of
$V T^{-n}$.  Can we bound the torsion in $H_*(X, \mathbb{Z})$ in terms of $V T^{-n}$?  For example,
if $X_D$ is the CW complex $S^d \cup_f B^{d+1}$, where the attaching map $f: S^d \rightarrow S^d$ has
degree $D$, and $X_D$ is embedded in $\mathbb{R}^n$, how small can $V T^{-n}$ be ?  
Also, can we bound the real simplicial volume of $X$ in terms of $V T^{-n}$?)

To finish Section 3.2, we apply these propositions to some examples.  We get good estimates for the
retraction thickness for k-skeleta of grids, for 2-dimensional manifolds, and for some 3-dimensional manifolds.

\vskip10pt

{\bf Example 1.} Let $X^{k,D}(S)$ denote the k-skeleton of a $D$-dimensional grid of side length $S$.  We fix $k, D$ and consider
asymptotics as $S \rightarrow \infty$.  For this example, the retraction thickness behaves very differently from the combinatorial thickness.
We let $N$ denote the number of simplices in $X^{k,D}(S)$.  If $X^{k,D}(S)$ is embedded with combinatorial thickness 1 into $B^n(R)$, then we showed in Section 2.2 that $R \gtrsim N^{\frac{1}{n-k} \frac{D-k}{D}} \sim N^{\frac{1}{n-k} - \epsilon}$.  But for any $n \ge 2k+1$, we will
embed $X^{k,D}(S)$ with retraction thickness 1 into $B^n(R)$ for $R \sim N^{1/n}$.

To construct the embedding, let $Y$ be 
the k-skeleton of an n-dimensional grid which is homotopy equivalent to $X$.  Such a $Y$ has $\sim N$ cells, and
it comes embedded with retraction thickness $\gtrsim 1$ into a ball $B^n(R)$ with $R \sim N^{1/n}$.  Since $X$ and $Y$
are homotopy equivalent and $n \ge 2k+1$, Proposition 3.1 implies that $X$ embeds into the same $B^n(R)$ with
retraction thickness $\gtrsim 1$.

This construction is basically sharp.  The complex $X^{k,D}(S)$ has $N \sim S^D$ cells, and $H^k (X) $ has rank $\sim S^D \sim N$.
By Proposition 3.3, if $X$ embeds with retraction thickness 1 into $B^n(R)$, then $R \gtrsim N^{1/n}$.  

\vskip10pt

{\bf Example 2.} Let $\Sigma$ be a closed surface of genus $g$.  For each $n \ge 3$, 
it's straightforward to embed $\Sigma$ with retraction thickness 2 into
an n-dimensional cylinder with radius 10 and length $10 g$: $B^{n-1}(10) \times [0, 10 g]$.  
This cylinder C-bilipschitz embeds into $B^n(R)$ for $R \sim g^{1/n}$.  Hence
$\Sigma$ embeds with retraction thickness 1 into $B^n(R)$ for $R \sim g^{1/n}$.  
Since $H_1(\Sigma)$ has rank $2g$, this radius cannot be improved.

\vskip10pt

{\bf Example 3.} Let $X$ be a closed k-dimensional hyperbolic manifold with volume $V$.
In particular, the simplicial volume of $X$ is $\gtrsim V$.  Therefore, if $X$ is embedded with
retraction thickness 1 into $\mathbb{R}^n$, then the 1-neighborhood of $X$ has volume
$\gtrsim V$.  If $X$ is embedded with retraction thickness 1 into $B^n(R)$, then 
$R \gtrsim V^{1/n}$.  

This estimate is sharp for cyclic coverings.  Suppose that $X_1$ is a closed
hyperbolic k-manifold and that there is a surjective homomorphism $\pi_1(X_1) \rightarrow \mathbb{Z}$.
Then we can construct corresponding cyclic coverings $X_D \rightarrow X_1$ of degree $D$.  The hyperbolic
volume of $X_D$ is $\sim D$.  If 
$n \ge 2k+1$, then this $X_D$ can be embedded with retraction thickness 1 into a solid torus $S^1(L) \times B^{n-1}(r)$ with
radius $r \lesssim 1$ and $L \sim D$.  This solid torus in turn admits a $C$-bilipschitz
embedding into $B^n(R)$ for $R \sim D^{1/n}$.  Hence $X_D$ can be embedded with retraction thickness
1 into $B^n(R)$ for $R \sim D^{1/n}$.  

\subsection{Retraction thickness for arithmetic hyperbolic manifolds}

We will prove a stronger bound on the retraction thickness for arithmetic hyperbolic manifolds of
dimension $k \ge 3$.  In the appendix, we give a review of arithmetic hyperbolic manifolds.  Here, we quickly
mention the facts we need for our argument.  In each dimension $k \ge 2$, arithmetic covers can be used to construct 
a sequence of closed hyperbolic manifolds $X_i$ with volume $V_{hyp}(X_i)
\rightarrow + \infty$, and with uniformly bounded Cheeger isoperimetric constant $h(X_i) \ge c > 0$ for all $i$.
Moreover, each $X_i$ may be triangulated with $N_i \sim V_{hyp}(X_i)$ simplices.  The following theorem
shows that it is difficult to embed such arithmetic manifolds into $\mathbb{R}^n$ with a given retraction thickness.

\newtheorem*{theorem3.2}{Theorem 3.2}
\begin{theorem3.2} Suppose that $X^k$ is a closed hyperbolic manifold with volume $V_{hyp}$ and Cheeger isoperimetric
constant $h$, with dimension $k \ge 3$.  If
$X$ is embedded in $\mathbb{R}^n$ with retraction thickness $T$, and the $T$-neighborhood of $X$ in $\mathbb{R}^n$
has volume $V$, then the following inequality holds.

$$ h^{\frac{n}{n-1}} V_{hyp}^{\frac{n}{n-1}} \lesssim V T^{-n}. $$
\end{theorem3.2}

Before turning to the proof, we make some comments about the theorem.  
The most interesting case is when $X$ is an arithmetic hyperbolic manifold described above.  Such a manifold
$X$ can be triangulated with $N$ simplices, and has $V_{hyp} \sim N$ and $h \sim 1$.  Therefore, if $X$ is
embedded into $\mathbb{R}^n$ with retraction thickness 1, the 1-neighborhood of $X$ has volume $\gtrsim
N^{\frac{n}{n-1}}$.  This estimate is significantly stronger than what we get by applying the estimate for
$Rank H_*(X)$ in Proposition 3.3 or the estimate for simplicial volume in Proposition 3.4.  
Since $X$ is triangulated with $N$ simplices,
$Rank H_*(X) \lesssim N$ and the simplicial norm of $X$ is $\le N$.  Using the rank of $H_*(X)$ and the simplicial
norms of $X$, we can only prove that the volume of the 1-neighborhood of $X$ is $\gtrsim N$.  
The moral of Theorem 3.2 is that the hyperbolic manifolds with Cheeger constant $\sim 1$ are more topologically 
complicated than other hyperbolic manifolds of the same volume (such as cyclic coverings). 

On the other hand, there is a significant gap between the bounds in Theorem 3.1 and Theorem 3.2.  For example,
consider embedding a 3-dimensional arithmetic hyperbolic manifold $X^3$ into $B^7(R)$.  What is the smallest
$R$ so that $X^3$ embeds in $B^7(R)$ with retraction thickness 1?  Theorem 3.1 says that this radius is
$\lesssim N^{1/4 + \epsilon}$.  Theorem 3.2 says that this radius is $\gtrsim N^{1/6}$. 

Theorem 3.2 is somewhat analogous to the theorem about combinatorial thickness of expander graphs.  To see the analogy,
let us look at the case $k=3$, and consider an arithmetic hyperbolic 3-manifold $X^3$.  A key fact
about expander graphs is that any map from an expander graph to $\mathbb{R}$ has a ``complicated" level set
which intersects many edges.  The key fact about arithmetic hyperbolic 3-manifolds is that any map from $X^3$ to 
$\mathbb{R}$ has a ``complicated" level set which is a surface of large genus.  See the appendix for more information,
and some history of this fact.

Using this key fact, we may sketch the main point of our proof.  Suppose that $X$ is an arithmetic hyperbolic 3-manifold
triangulated with $N$ simplices embedded into $\mathbb{R}^n$ with retraction thickness 1, and let $V$ denote
the volume of $N_1(X)$, the 1-neighborhood of $X$.  By the Falconer slicing theorem, we may slice $\mathbb{R}^n$
by parallel hyperplanes so that each hyperplane intersects $N_1(X)$ in surface area $\lesssim V^{\frac{n-1}{n}}$.  (The Falconer
slicing estimate is recalled in Section 1.1.)  By
the key fact in the last paragraph, one of the hyperplanes intersects $X$ in a surface $\Sigma$ of genus $\gtrsim N$.
Let us call this hyperplane $P$.  We know that $\Sigma$ has genus $\gtrsim N$ and we know that the 1-neighborhood
of $\Sigma$ in $P$ has volume $\lesssim V^{\frac{n-1}{n}}$.  Suppose for a second that we knew that the retraction thickness
of $\Sigma$ in $P$ was $\ge 1$.  Then Proposition 3.3 would imply that $N \lesssim V^{\frac{n-1}{n}}$ - the estimate we would 
like to prove.  Morally, we think that this is the main idea of the proof.  But strictly speaking, the 1-neighborhood of $\Sigma$
may not retract to $\Sigma$ - it only needs to retract into $X$.  In the proof below we get around this technical difficulty
by working with the nerve of a covering by balls, and by keeping track of many slices and how they fit together.

\begin{proof}[Proof of Theorem 3.2]  By scaling, we may assume that $T = 1$.

We let $N_W(X) \subset \mathbb{R}^n$ denote the $W$-neighborhood of $X \subset \mathbb{R}^n$.  
Choose a maximal $1/4$-separated set of points in $N_{1/2}(X)$: $p_1, p_2, ...$  Let $U$ be the 
open set $U := \cup_{i} B(p_i, 1/4)$.  We have $X \subset N_{1/2}(X) \subset U \subset N_{3/4}(X)$.
In particular, $U$ retracts to $X$.

Our set $U$ is covered by balls $U := \cup_i B(p_i, 1/4)$.  We let $N$ denote the nerve of this cover.
Since an intersection of balls is convex and hence contractible, the nerve $N$ is homotopy equivalent
to the cover $U$.  We let $\Phi: U \rightarrow N$ denote a map subordinate to the cover, and we let
$\Psi: N \rightarrow U$ be a homotopy inverse of $\Phi$.  (In particular, $\Psi \circ \Phi: U \rightarrow
U$ is homotopic to the identity.)

Next we apply the Falconer slicing inequality to the set $N_1(X)$ which has volume $V$.  (The Falconer
slicing inequality is recalled in Section 1.)  According to the slicing inequality, we can rotate our coordinate
frame so that for every $t \in \mathbb{R}$, the (n-1)-dimensional volume of the ``slice" $N_1(X) \cap \{ x_n = t \}$
is $\lesssim V^{\frac{n-1}{n}}$.

The first key step in our proof is to understand how this slicing estimate interacts with the geometry of the nerve
$N$.  In order to do that, we intersect our manifold $X$ with slabs $Slab(j) := \{ j \le x_n \le j+1 \}$.

Let $X_j := X \cap Slab(j)$.  We view $X_j$ and $X$ as mod 2 chains, and observe that $X = \sum_j X_j$.  
(Since $X$ is a closed manifold, all but finitely many $X_j$ are empty - hence the sum $\sum_j X_j$ is
a finite sum.)

The manifold $X$ is closed, and so it is a mod 2 cycle.  But the chains $X_j$ may have boundary.  Since
we are working mod 2, $\partial X_j = X \cap \{ x_n = j \} + X \cap \{ x_n = j+1 \}$.  We define $Z_j$ to be
the mod 2 (k-1)-cycle $X \cap \{ x_n = j \}$.  So $\partial X_j = Z_j + Z_{j+1}$.  Each $Z_j$ is a cycle
in $X$.  And each $Z_j$ is null-homologous because it bounds $X \cap \{ x_n \ge j \}$.  

Similarly, we divide the nerve $N$ according to the slabs $Slab(j)$.  To begin with, we let $\frak B$ denote
our set of balls $\{ B(p_i, 1/4) \}$.  Next, we let $\frak B_j$ denote the set of balls in $\frak B$ which intersect
$Slab(j)$.  We define $N_j$ to be nerve corresponding to the set of balls $\frak B_j$.  In other words,
$N_j \subset N$, and a simplex of $N$ lies in $N_j$ if and only if all of the balls corresponding to the vertices
of the simplex intersect $Slab(j)$.

Our geometric estimate about the surface area of slices $N_1(X) \cap \{ x_n = t \}$ allows us to control
the number of simplices in each $N_j$.

\begin{lemma} For each $j$, the number of simplices in $N_j$ is $\lesssim V^{\frac{n-1}{n}}$.
\end{lemma}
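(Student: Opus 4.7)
The plan is to bound the number of vertices of $N_j$ first and then use that the multiplicity of the ball cover is bounded to conclude the same bound for the total number of simplices. The vertices of $N_j$ correspond to balls $B(p_i, 1/4)$ that intersect $\mathrm{Slab}(j)$, so each such $p_i$ satisfies $j - 1/4 \le (p_i)_n \le j + 5/4$ and, by construction, $p_i \in N_{1/2}(X)$. Since the centers $p_i$ are $1/4$-separated, the balls $B(p_i, 1/8)$ are pairwise disjoint, and each one lies inside the region
\[
A_j := N_{5/8}(X) \cap \{\, j - 3/8 \le x_n \le j + 11/8 \,\} \subset N_1(X) \cap \{\, j - 3/8 \le x_n \le j + 11/8 \,\}.
\]

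The next step is to estimate $\mathrm{vol}(A_j)$ using the Falconer-type slicing bound that has already been set up. By Fubini,
\[
\mathrm{vol}\bigl(N_1(X) \cap \{\, j - 3/8 \le x_n \le j + 11/8 \,\}\bigr) = \int_{j-3/8}^{j+11/8} \mathrm{area}\bigl(N_1(X) \cap \{x_n = t\}\bigr)\, dt,
\]
and each area in the integrand is $\lesssim V^{\frac{n-1}{n}}$ by the slicing choice of coordinates. Since the interval of integration has length bounded by an absolute constant, $\mathrm{vol}(A_j) \lesssim V^{\frac{n-1}{n}}$. Each disjoint ball $B(p_i, 1/8)$ has volume $\gtrsim 1$, so the number of vertices of $N_j$ is $\lesssim V^{\frac{n-1}{n}}$.

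Finally, pass from vertices to all simplices by observing that the multiplicity of the cover $\{B(p_i, 1/4)\}$ is bounded by a constant depending only on $n$: any point $q$ lies in at most $C(n)$ balls $B(p_i, 1/4)$, because the disjoint balls $B(p_i, 1/8)$ corresponding to such $i$ all sit in $B(q, 3/8)$, and a ball of radius $3/8$ can contain only $\lesssim 1$ disjoint balls of radius $1/8$. A bounded-multiplicity cover with $V$ vertices has a nerve with $\lesssim V$ simplices of every dimension. Combining this with the vertex bound gives $\#\text{simplices}(N_j) \lesssim V^{\frac{n-1}{n}}$, as required.

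I expect the slicing/Fubini step to be the substantive point: one has to make sure the choice of the neighborhood parameters ($1/2$ for separation, $1/4$ for covering, the $5/8$ containment for the disjoint balls) is consistent, so that the auxiliary thickened slab $A_j$ really is contained in $N_1(X)$ where the Falconer estimate applies. The bounded-multiplicity argument is standard once the separation is in hand, and the passage from vertex count to simplex count is automatic.
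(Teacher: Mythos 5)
Your proof is correct and takes essentially the same route as the paper: the paper reduces to counting the balls of $\mathfrak{B}_j$ using the bounded multiplicity of the cover and then bounds that count by the volume of the doubled balls $B(p_i,1/2)$ inside $Slab(j)$, integrated slice-by-slice via the Falconer estimate, while you run the same packing-plus-Fubini computation with the disjoint balls $B(p_i,1/8)$ inside a slightly widened slab --- a cosmetic difference. One small caution: your general claim that a bounded-multiplicity cover has a nerve with $\lesssim$ (number of vertices) simplices is not true for arbitrary covers; what you actually need is that each ball meets $\lesssim 1$ other balls (so each vertex of $N_j$ lies in $\lesssim 1$ simplices), which follows immediately from the $1/4$-separation of the centers and is the implicit justification in the paper as well.
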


\begin{proof} By a standard argument, we prove
that each point $x \in \mathbb{R}^n$ lies in $\lesssim 1$ balls of $\frak B$.  
Since the centers $p_i$ are $1/4$-separated, the balls $B(p_i, 1/8)$ are disjoint.
If a point $x$ lies in $B(p_i, 1/4)$, then $B(p_i, 1/8)$ lies in $B(x, 3/8)$.  There
are $\lesssim 1$ disjoint balls of radius $1/8$ in any ball of radius 3/8, and this
proves the estimate.

In particular, any point $x$ lies in $\lesssim 1$ balls of $\frak B_j$.  So to prove
the lemma, it suffices to prove that there are $\lesssim V^{\frac{n-1}{n}}$ balls
in $\frak B_j$.

To do this, we consider the doubles of the balls in $\frak B$: the set
$B(p_i, 1/2)$.  We let $U' := \cup_i B(p_i, 1/2) \subset N_1(X)$.  
If $B(p_i, 1/4)$ lies in $\frak B_j$, then $B(p_i, 1/2) \cap Slab(j)$ has
volume $\gtrsim 1$.  The balls $B(p_i, 1/2)$ also have multiplicity $\lesssim 1$,
and so the number of balls in $\frak B_j$ is bounded $\lesssim Vol_n [U' \cap Slab(j)]$.

This last volume is bounded above in terms of slices:

$$ Vol_n [U' \cap Slab(j) ] \le \int_{j}^{j+1} Vol_{n-1} [ N_1(X) \cap \{ x_n = t \} ] dt
\lesssim V^{\frac{n-1}{n}}. $$

This finishes the proof of the lemma. \end{proof}

Since the map $\Phi$ is subordinate to the cover of $U$ by the balls of $\frak B$,
$\Phi$ maps each $X_j$ into $N_j$.  Also, $\Phi$ maps $Z_j$ into $N_j$.

Next we deform $\Phi$ to a map $\Phi'$ which takes each $X_j$ into the
k-skeleton of $N_j$ and each $Z_j$ into the (k-1)-skeleton of $N_j$.
We define $X_j'$ to be the mod 2 k-chain given by $\Phi' (X_j)$ and we define
$Z_j'$ to be the mod 2 (k-1)-cycle given by $\Phi'(Z_j)$.  Because $Z_j$ was null-homologous
in $X$, $Z_j'$ is null-homologous in $N$.  Because of our bounds on
the geometry of $N$, we know that each $X_j'$ and each $Z_j'$ has 
$\lesssim V^{\frac{n-1}{n}}$ simplices.

Next we move our information about chains and cycles in the nerve $N$ back
to information about chains and cycles in $X$.

Recall that the nerve $N$ is homotopy equivalent to $U$.  We have a map
$\Psi: N \rightarrow U$ and $\Psi \circ \Phi$ is homotopic to the identity.  Since
$\Phi$ is homotopic to $\Phi'$, $\Psi \circ \Phi': U \rightarrow U$ is also homotopic
to the identity.

Now $U$ retracts to $X$.  Let $R: U \rightarrow X$ be the retraction.  So we see
that $R \circ \Psi \circ \Phi': X \rightarrow X$ is homotopic to the identity.  In particular,
$R \circ \Psi \circ \Phi' ( [ X ] )$ is homologous to $[X]$.  Therefore, we can decompose the
cycle $[X]$ in the following way:

$$ [ \sum_j R \circ \Psi( X_j ' ) ] = [X ].  \eqno{(1)}$$

To exploit our information about the hyperbolic metric on $X$, we use the simplex
straightening method developed by Thurston and Milnor in the 1970's \cite{MT}.

\newtheorem*{simpstraight}{Simplex straightening lemma}
\begin{simpstraight} (\cite{MT}) If $N$ denotes any simplicial complex and $X$ is a complete hyperbolic
manifold, and $F: N \rightarrow X$ is a continuous map, then $F$ may be homotoped to a
``straight map" $S: N \rightarrow X$ which maps each d-simplex of $N$ to a geodesic d-simplex
in $X$.  In particular, for $d \ge 2$ the image $S(\Delta^d)$ has $d$-volume $\lesssim 1$ for
each simplex $\Delta^d \subset N$.
\end{simpstraight}

In our case, we have a map $R \circ \Psi$ from $N$ to $X$, and we homotope it to a straight
map $S$.  Applying this homotopy to equation $(1)$, we get

$$[ \sum_j S (X_j') ] = [X]. \eqno{(2)}$$

We let $\bar X_j := S (X_j')$, and we let $\bar Z_j = S(Z_j')$.  The geometric properties of the straightening
map allow us to bound the volumes of $\bar X_j$ and $\bar Z_j$.  Because $X_j'$ consists of $\lesssim V^{\frac{n-1}{n}}$
simplices, the k-volume of $\bar X_j$ is $\lesssim V^{\frac{n-1}{n}}$.  Because $k \ge 3$ and $Z_j' $ consists of 
$\lesssim V^{\frac{n-1}{n}}$ simplices, the (k-1)-volume of $\bar Z_j$ is $\lesssim V^{\frac{n-1}{n}}$.

By equation (2), $\sum \bar X_j$ is homologous to $X$.  Since $\partial X_j' = Z_j' + Z_{j+1}'$, we see
that $\partial \bar X_j = \bar Z_j + \bar Z_{j+1}$.  Since $Z_j'$ are null-homologous in $N$, $\bar Z_j$ is
null-homologous in $X$.

At this point we can use the isoperimetric information about $X^k$.  Recall that $h$ is the Cheeger
isoperimetric constant of $X^k$ with its hyperbolic metric.  This means that if $U \subset X$
is an open set with volume $\le (1/2) V_{hyp}$, then the surface area of $\partial U$ is
$\ge h Vol(U)$.  Based on this definition, it follows that any null-homologous mod 2 (k-1)-cycle $Z$ in $X^k$
bounds a k-chain $Y$ with $Vol_k(Y) \le h^{-1} Vol_{k-1}(Z)$.  Since each of our cycles
$\bar Z_j$ is null-homologous, we can find a k-chain $\bar Y_j$ with $\partial \bar Y_j = \bar Z_j$ obeying the volume
estimate $ Vol_k (\bar Y_j) \lesssim h^{-1} Vol_{k-1} (\bar Z_j) \lesssim h^{-1} V^{\frac{n-1}{n}}$.

Now we can decompose the fundamental class of $X$ as a sum of cycles:

$$ [X] \sim \sum_j (\bar X_j + \bar Y_j + \bar Y_{j+1}).$$

When we sum over $j$, each $\bar Y_j$ appears twice and cancels.  Hence the right-hand side
is equal to $\sum_j \bar X_j$ which is indeed homologous to $[X]$.

On the other hand, each term $\bar X_j + \bar Y_j + \bar Y_{j+1}$ is a cycle because
its boundary is $\bar Z_j + \bar Z_{j+1} + \bar Z_j + \bar Z_{j+1}$ and we are working modulo 2.
So one of the cycles $(\bar X_j + \bar Y_j + \bar Y_{j+1})$ has volume at least
$V_{hyp}$.  Comparing this lower bound with our upper bounds for the volumes of $\bar X_j$
and $\bar Y_j$, we see

$$V_{hyp} \lesssim ( 1 + h^{-1}) V^{\frac{n-1}{n}}. $$

Finally, we note that any closed hyperbolic manifold has $h \lesssim 1$, and so the $h^{-1}$
dominates the 1.

$$V_{hyp} \lesssim h^{-1} V^{\frac{n-1}{n}}. $$
\end{proof}

{\bf Possible generalizations.} Theorem 3.2 can probably be generalized to many non-hyperbolic manifolds as well.  
We used hyperbolicity only to apply simplex-straightening and to bound the volumes of straight simplices
of dimension $k$ and $k-1$.  For example, if $X^k$ is a locally symmetric space with negative Ricci curvature, then
it is possible to apply simplex straightening, and the volume of a k-dimensional straight simplex was bounded by
Lafont and Schmidt in \cite{LS}.  To apply the proof above, one also needs a bound on the volumes of straight simplices
of dimension $k-1$.  If the universal cover of $X$ has a hyperbolic plane as a factor, then the volumes of straight simplices
of dimension $k-1$ will be unbounded.  But if the universal cover of $X$ does not have a hyperbolic plane as a factor,
then it may be possible to prove such a bound along the lines of \cite{LS}.

\subsection{Strong combinatorial thickness and retractions}

In this section, we give the proof of Theorem 3.1.  

\begin{theorem3.1} If $X^k$ is a k-dimensional simplicial complex with $N$ simplices and each vertex lies in
$\le L$ simplices, and if $n \ge 2 k + 1$, then there is an embedding with retraction thickness 1 from $X$ 
into $B^n(R)$ for $R \le C(n,L, \epsilon) N^{\frac{1}{n-k} + \epsilon}$.  
\end{theorem3.1}

We adapt the proof of Theorem 2.1.  In order to control retraction thickness, we need the following variation of combinatorial
thickness.

If $X$ is a k-dimensional simplicial complex, we say that a topological 
embedding $I: X \rightarrow \mathbb{R}^n$ has strong combinatorial
thickness $\ge T$ if the following holds.  Given any simplices $\Delta_1, ...
\Delta_J$ of $X$, then $\cap_{j=1}^J N_T[ I (\Delta_j) ]$ is non-empty only if
the simplices $\Delta_1, ..., \Delta_J$ have a common vertex.  (In other words,
if and only if $\cap_{j=1}^J \Delta_j$ is non-empty.)

For example, suppose $X$ is a triangle - a 1-complex with three edges, $E_1, E_2, E_3$.
If $I: X \rightarrow \mathbb{R}^3$ has combinatorial thickness at least 1, then it means
that each vertex is not mapped too close to the opposite edge.  If $I$ has strong combinatorial
thickness at least 1, then it means in addition that there is no point that lies too close to
all three edges.  The strong combinatorial thickness is related to retractions by the following
proposition.

\begin{prop} If $I: X \rightarrow \mathbb{R}^n$ is an embedding with strong combinatorial thickness
at least $T$, then the $T-\epsilon$ neighborhood of $I(X)$ retracts to $I(X)$.
\end{prop}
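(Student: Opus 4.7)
The plan is to give a nerve-theoretic proof: establish that the inclusion $I(X) \hookrightarrow U := N_{T-\epsilon}(I(X))$ is a homotopy equivalence, and then promote the homotopy inverse to a genuine retraction via the homotopy extension property cited in the paper after Proposition 3.1.

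First I would approximate $I$ by a piecewise-linear embedding with a negligible loss of strong combinatorial thickness (which is lower semicontinuous in $I$), and then cover $U$ by the open sets $V_\Delta := N_T(I(\Delta))$ indexed by simplices $\Delta$ of $X$. In the piecewise-linear case each $V_\Delta$ is convex (a Minkowski sum of a convex set with an open ball), and so is every nonempty finite intersection; thus the cover is good. The defining property of strong combinatorial thickness gives the crucial combinatorial identification: $\bigcap_j V_{\Delta_j}$ is nonempty if and only if $\bigcap_j \Delta_j$ is nonempty in $X$. Hence the nerve $\mathcal{N}$ of the cover has vertices the simplices of $X$ and simplices the families sharing a common face, and the nerve theorem yields a homotopy equivalence $\Phi_U : U \to |\mathcal{N}|$.

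Next I would construct a homotopy equivalence $\Psi : |\mathcal{N}| \to |X|$. Working on barycentric subdivisions, define $\Psi$ on the vertex $\sigma = \{\Delta_1, \ldots, \Delta_J\}$ of $\text{sd}(\mathcal{N})$ by $\Psi(\sigma) = \tau_\sigma := \bigcap_j \Delta_j$, regarded as a vertex of $\text{sd}(X)$. A chain in $\mathcal{N}$ produces a reversed chain of faces of $X$, so $\Psi$ is a well-defined simplicial map. The poset map $\sigma \mapsto \tau_\sigma$ has contractible fibers (the families with prescribed common intersection $\tau$ form a contractible subposet), so Quillen's Theorem A shows that $\Psi$ is a homotopy equivalence.

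Finally I would check that $\Psi \circ \Phi_U \circ I$ is homotopic to the identity of $X$. For $x$ in the open simplex $\Delta_x$ of $X$, the vertex $[\Delta_x]$ lies in the simplex $\sigma_x$ of $\mathcal{N}$ carrying $\Phi_U(I(x))$, hence $\tau_{\sigma_x} \subseteq \Delta_x$; so $\Psi(\Phi_U(I(x)))$ lies in the closed star $\bar{\text{st}}(\tau_{\sigma_x})$ in $X$, which also contains $x$ and is contractible. A carrier-style homotopy through these stars supplies the needed homotopy. By the 2-out-of-3 property, the inclusion $I(X) \hookrightarrow U$ is a homotopy equivalence, and the homotopy extension property then promotes it to a strict retraction. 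The main obstacle I expect is the carrier argument in this last step: the carrier $\bar{\text{st}}(\tau_{\sigma_x})$ is only upper semicontinuous in $x$ and jumps along equidistant loci, so one must verify that the carrier framework still produces a genuine continuous homotopy across these jumps—this is precisely where the additional rigidity of strong combinatorial thickness (compared to ordinary combinatorial thickness) is essential.
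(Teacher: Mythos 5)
Your high-level strategy (build a homotopy retraction onto $I(X)$ and then upgrade it with the homotopy extension property) is the same as the paper's, but the middle of your argument has two genuine gaps. First, the reduction to the piecewise-linear case does not work as stated. The proposition is about an arbitrary topological embedding $I$ and about the neighborhood of its actual image $I(X)$; if you replace $I$ by an approximation $I'$, a retraction of a neighborhood of $I'(X)$ onto $I'(X)$ says nothing about $I(X)$ unless you also produce an ambient homeomorphism carrying one image to the other, which is exactly the kind of taming statement you cannot assume (the proposition carries no dimension hypotheses such as $n \ge 2k+1$). Moreover, even for a PL embedding the sets $V_\Delta = N_T(I(\Delta))$ are \emph{not} convex -- a PL image of a simplex is a bent polyhedron; convexity would require $I$ to be linear on each simplex of $X$ without subdivision, which a general (or even PL) embedding need not admit. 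So the ``good cover'' input to your nerve theorem is unjustified; and in any case the $V_\Delta$ are not subsets of $U = N_{T-\epsilon}(I(X))$, so the nerve theorem as invoked gives an equivalence for $N_T(I(X))$, not for $U$, and intersecting with $U$ destroys convexity.

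Second, and more importantly, the step you yourself flag as the ``main obstacle'' -- showing that the composite $\Psi\circ\Phi_U\circ I$ is homotopic to the identity of $X$, so that the inclusion $I(X)\hookrightarrow U$ is an equivalence -- is precisely the core of the statement, and it is left unproven. Knowing $U \simeq |\mathcal{N}| \simeq |X|$ abstractly does not make the inclusion an equivalence; the carrier argument is where the strong combinatorial thickness must actually be used, and your sketch does not carry it out. The paper's proof does exactly this work, and does it without any convexity, nerve theorem, or approximation: it takes a triangulation of $N_{T-\epsilon}(I(X))$ with mesh $<\epsilon$, sends each vertex $v$ of that triangulation to a vertex of $X$ common to all simplices $\Delta$ with $v \in N_T[I(\Delta)]$ (this is where strong thickness enters), checks that this extends to a simplicial map into $X$, and then checks that each simplex $\Delta^j$ of $X$ is mapped into its own closure, which yields the homotopy to the identity; the homotopy extension property then converts the homotopy retraction into a genuine retraction. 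If you want to salvage your nerve-theoretic route, you would need to (i) drop the approximation step and work metrically with the sets $N_T[I(\Delta)]$ as the paper does, and (ii) prove the subordinate-map/carrier step explicitly, at which point your argument essentially collapses back into the paper's.
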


\begin{proof} 
In this proof, we will slightly abuse notation by identifying $I(X)$ and $X$.  When we
write $N_T(X)$, this means the $T$-neighborhood of $X$ in $\mathbb{R}^n$.

Pick a fine triangulation of $N_{T- \epsilon} (X)$, where each edge of the
triangulation has length $< \epsilon$.  We first construct a map $\Psi$ from $N_{T - \epsilon}(X)$
to $X$, building it one skeleton at a time.  Next we will check that the restriction of $\Psi$ to 
$X$ is homotopic to the identity.  In other words, $\Psi$ is a homotopy retraction of $N_{T - \epsilon}
(X)$ onto $X$.  But since $I$ is an embedding, the homotopy extension theorem allows us
to homotope $\Psi$ to a genuine retraction from $N_{T - \epsilon} (X)$ onto $X$.

We have to be careful with notation because $X$ is a simplicial complex, and we need to talk
about its simplices, and we also triangulated $N_{T - \epsilon}(X)$ and we need to think about
the simplices of the triangulation.  We use $\Delta^j$ to refer to a j-dimensional simplex of 
$X$.  We use English letters to refer to simplices of the triangulation of $N_{T - \epsilon}(X)$:
we call the vertices $v$ and the higher-dimensional faces $F$.

We first define $\Psi$ on the vertices of our triangulation of $N_{T- \epsilon} (X)$.  
For any vertex $v$, consider all the simplices $\Delta \subset X$ so that $v \in N_T [ \Delta ] $.
By the definition of strong combinatorial thickness, {\it all of these simplices have a common
vertex}.  We define $\Psi (v)$ to be one of the common vertices.

Now we claim that $\Psi$ extends to a simplicial map from $N_{T- \epsilon} (X)$ to $X$.
Let $F$ denote a d-simplex of $N_{T- \epsilon} (X)$ with vertices $v_1, ..., v_{d+1}$.  
We have to check that $\Psi(v_1), ..., \Psi(v_{d+1})$ are all vertices of a single simplex of $X$.
By hypothesis, $v_1 \in N_{T- \epsilon} (X)$, and so there is a simplex $\Delta \subset X$ 
with $v_1 \in N_{T- \epsilon} (\Delta)$.  Since the edges of the triangulation of $N_{T- \epsilon} (X)$
have length $< \epsilon$, all the vertices $v_i$ lie in $N_{T} (\Delta)$.  Now by our choice of $\Psi(v_i)$, we
see that each $\Psi(v_i)$ lies in $\Delta$.  Hence $\Psi$ extends to a simplicial map.

Next we have to check that $\Psi$ is homotopic to the identity.  To do this, we check that for
each simplex $\Delta^j \subset X$, $\Psi(\Delta^j)$ lies in the closure of $\Delta^j$.   Let $p$ be a point
of some simplex $\Delta^j \subset X \subset \mathbb{R}^n$.  The point $p$ lies in some
simplex $F$ of our fine triangulation of $N_{T- \epsilon} (X)$.  We let $v_1, ..., v_{d+1}$
denote the vertices of $F$.  Because we chose a fine triangulation, we can assume that
the distance from $p$ to any $v_i$ is $\le \epsilon$.  In particular, each $v_i$ easily lies
in $N_T (\Delta^j)$.  Therefore $\Psi(v_i)$ is one of the vertices of $\Delta$, and $\Psi(F)$ 
lies in the closure of $\Delta^j$.  In particular $\Psi(p)$ lies
in the closure of $\Delta^j$.  Since $p$ was an arbitrary point of $\Delta^j$, $\Psi(\Delta^j)$ lies in
the closure of $\Delta^j$.  

Therefore, $\Psi$ restricted to $X$ is homotopic to the identity.  By the homotopy extension property,
$\Psi$ is is homotopic to a retraction.  \end{proof}

If an embedding $I: X \rightarrow \mathbb{R}^n$ has strong combinatorial thickness
$ > T$, then it has retraction thickness $> T$ also.  Therefore, Theorem 3.1 follows
from the following proposition.

\begin{prop} Suppose that $X$ is a k-dimensional simplicial complex with $N$ simplices
and that each vertex of $X$ lies in $\le L$ simplices.  If $n \ge 2k+1$, then there is an
embedding $I: X \rightarrow B^n(R)$ with strong combinatorial thickness $> 1$, where
$R \le C(n,L) N^{\frac{1}{n-k}} (polylog N)$.  In particular, the 1-neighborhood of $I(X)$
retracts to $I(X)$.
\end{prop}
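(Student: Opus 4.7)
The plan is to adapt the probabilistic construction in the proof of Theorem 2.1, replacing the one bad event per pair of non-adjacent simplices by one bad event per $J$-tuple of simplices with no common vertex. The only genuinely new ingredient is a Helly-style argument showing that it suffices to consider $J \le n+1$.

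For the Helly reduction, note that for a facewise linear map $I$, the neighborhood $N_T(I(\Delta))$ of each simplex $\Delta$ of the refined complex $X'$ is the Minkowski sum of the convex set $I(\Delta)$ with a ball, hence convex. Suppose $\mathcal{S} = \{\Delta_1, \ldots, \Delta_J\}$ is a ``bad'' tuple in $X'$, i.e.\ $\cap_j V(\Delta_j) = \emptyset$ yet $\cap_j N_T(I(\Delta_j)) \neq \emptyset$. If $J > n+1$, Helly's theorem applied to the convex sets $N_T(I(\Delta_j))$ guarantees that removing any single simplex leaves a non-empty common intersection. If no smaller sub-tuple were still bad, then for each $i$ the reduced $(J-1)$-tuple would acquire a common vertex: there would exist $v_i \in \cap_{j \neq i} V(\Delta_j) \setminus V(\Delta_i)$. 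The $v_i$ are pairwise distinct, and each $V(\Delta_j)$ would contain $\{v_i : i \neq j\}$, forcing $|V(\Delta_j)| \ge J-1$. Since $\dim\Delta_j \le k$, this gives $J \le k+2$, and $k+2 \le n+1$ because $n \ge 2k+1$, contradicting $J > n+1$. Hence every minimal bad tuple has size at most $n+1$.

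Given this reduction, the rest closely follows the proof of Theorem 2.1. Starting from a random facewise linear embedding $I_0: X \to B^n(N^{1/(n-k)})$, apply the Simplex Decomposition Lemma to obtain a refinement $X'$ whose simplices have diameter $\lesssim 1$, with $\lesssim \log N$ simplices meeting any unit ball and $\lesssim L$ simplices at each vertex, and perturb each vertex of $X'$ by a random vector of length $\le 1$ to produce $I: X' \to B^n(N^{1/(n-k)}+1)$. For each tuple $(\Delta_1, \ldots, \Delta_J)$ of simplices of $X'$ with $2 \le J \le n+1$ and $\cap_j V(\Delta_j) = \emptyset$, let $\mathrm{Bad}^*_\epsilon(\Delta_1, \ldots, \Delta_J)$ be the event that $\cap_j N_\epsilon(I(\Delta_j)) \neq \emptyset$. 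By the Helly reduction, if $I$ avoids every non-empty event of this form, then $I: X' \to \mathbb{R}^n$ has strong combinatorial thickness $\ge \epsilon$; this passes to strong combinatorial thickness $\ge \epsilon$ for $I: X \to \mathbb{R}^n$, since any common $X'$-vertex of sub-simplices $\Delta^{X'}_j \subseteq \Delta^X_j$ lies in the interior of some face of $X$ that is then common to all the $\Delta^X_j$.

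For each vertex $v$ of $X'$, the number of non-empty $\mathrm{Bad}^*_\epsilon$-events containing a simplex through $v$ is $\lesssim L(\log N)^{n}$: one picks a simplex of $X'$ containing $v$ ($\lesssim L$ choices), then up to $n$ further simplices meeting a bounded neighborhood of $v$ ($\lesssim \log N$ each, by the Simplex Decomposition Lemma). The probability of each individual event is $\lesssim \epsilon$: for $J = 2$ this is Lemma 2.3, and for $J \ge 3$ the event is a sub-event of the $J = 2$ event for any two of its simplices. Running the sequential Lovász-local-lemma selection from the proof of Theorem 2.1, when we fix the perturbation at $v$ we can ensure that each affected conditional probability grows by a factor $\lesssim L(\log N)^{n}$, and each event is affected by at most $(n+1)(k+1)$ vertex choices, so the total accumulated factor is $(\log N)^{O(n(k+1))}$. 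Choosing $\epsilon \sim (\log N)^{-O(n(k+1))}$ keeps every conditional probability strictly less than $1$; rescaling by $1/\epsilon$ then yields an embedding into $B^n(R)$ with $R \lesssim N^{1/(n-k)} (\log N)^{O(nk)}$ and strong combinatorial thickness $>1$. The main obstacle is the Helly reduction; without it, unboundedly large bad tuples would make the counting blow up.
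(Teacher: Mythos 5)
Your reduction to bad tuples of bounded size is essentially correct, though the appeal to Helly's theorem is superfluous: the intersection of the $\epsilon$-neighborhoods over any sub-tuple automatically contains the intersection over the full tuple, so no convexity is needed, and your minimality argument by itself gives $J \le k+2$ — which is exactly Lemma 3.8 in the paper, proved there by directly selecting, for each vertex of $\Delta_1$, a simplex avoiding that vertex. Up to this point your scheme matches the paper's.

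The genuine gap is the probability estimate for a bad tuple. You claim each event $Bad_\epsilon(\Delta_1,\ldots,\Delta_J)$ has probability $\lesssim \epsilon$ because ``for $J\ge 3$ the event is a sub-event of the $J=2$ event for any two of its simplices.'' But the pairwise estimate (Lemma 2.3) applies only to pairs of simplices with \emph{no vertex in common}, whereas a tuple with no common vertex can easily consist of simplices that pairwise share vertices — already for $k=1$, the three edges of a triangle in $X'$ have no common vertex but every pair meets, so every pairwise distance is identically zero and the pairwise events are vacuous. For such tuples the quantity to be controlled is the probability that some single point is $\epsilon$-close to all $J$ simplices simultaneously, and this requires a genuinely multi-simplex general-position estimate: this is the technical heart of the paper's proof (Lemma 3.9, established via Propositions 3.10--3.13, i.e., bounds on the inverse of a random matrix applied inductively to show that $\cap_{i\le j} N_\epsilon(P_i)$ is confined to a neighborhood of the plane spanned by the common face, before finally invoking the disjoint-pair bound). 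That argument yields only probability $\lesssim \epsilon^{1/(k+1)}$, and the paper explicitly remarks that the bound $\lesssim \epsilon$ you assert is one the authors do not know how to prove. The weaker exponent still suffices — rerunning your sequential conditional-probability argument with it just forces $\epsilon \sim (\log N)^{-Ck^4}$ and a correspondingly larger polylog in $R$ — but as written your proposal omits this estimate entirely, and the pairwise reduction you offer in its place fails.
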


This proposition is a modification of Theorem 2.1.  We control the strong combinatorial thickness
instead of the combinatorial thickness.  On the other hand, the estimate is a little worse, with
a $polylog N$ error factor in place of $(\log N)^{2k+2}$.  If one follows the proof, the power in
the $polylog N$ turns out to be $\sim (\log N)^{C k^4}$.  The proof occupies the rest of
Section 3.4.

This proof is a modification of the proof of Theorem 2.1 in Section 2.1.
The proof begins in the same way.  We let $I_0$ be a random facewise-linear embedding
of $X$ into $B^n(N^{\frac{1}{n-k}})$.  We know that each unit ball $B^n(p,1)$ intersects
$\lesssim \log N$ simplices of $I_0(X)$.  Next we subdivide $X$ into a finer complex $X'$,
so that each simplex of $X'$ has diameter $\lesssim 1$.  Then we perturb the map $I_0$ to
a map $I: X' \rightarrow B^n$ by moving each vertex a distance $\le 1$.  After the perturbation,
we need to check that the strong combinatorial thickness of $I: X' \rightarrow \mathbb{R}^n$ 
is $\ge \epsilon$ for some $\epsilon \gtrsim (polylog N)^{-1}$.

For any set of simplices $\Delta_1, ..., \Delta_J \subset X'$, we say that $\Delta_1, ... \Delta_J$ have
a common vertex if one vertex lies in all of the simplices.  For any set of simplices $\Delta_1, ..., 
\Delta_J$ with no common vertex,
we define $Bad_\epsilon(\Delta_1, ..., \Delta_J)$ to be the set of choices of $I$ so
that $\cap_{j=1}^J N_\epsilon[ I(\Delta_j) ]$ is non-empty.  
The map $I$ has strong combinatorial thickness at least $\epsilon$ unless it lies in one
of the sets $Bad_\epsilon$.

For the original combinatorial thickness, the bad sets only involve two simplices.  The 
new difficulty of strong combinatorial thickness is that the bad sets involve more
simplices.  However, we can put some bound on the number of simplices involved.

\begin{lemma} If the strong combinatorial thickness of $I$ is $< \epsilon$, then
we can find some $J \le k+2$ and simplices $\Delta_1, ..., 
\Delta_J \subset X'$ with no common vertex, so that the map $I$ lies in $Bad_\epsilon(\Delta_1, ..., \Delta_J)$.
\end{lemma}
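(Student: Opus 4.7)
The plan is to start from the point witnessing the failure of strong combinatorial thickness and then greedily reduce the collection of simplices whose $\epsilon$-neighborhoods contain it, using the fact that each simplex of $X'$ has at most $k+1$ vertices.

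First, by the assumption that the strong combinatorial thickness of $I$ is less than $\epsilon$, there exist simplices $\Sigma_1,\dots,\Sigma_M$ of $X'$ with no common vertex such that the intersection $\bigcap_{j=1}^M N_\epsilon[I(\Sigma_j)]$ is non-empty. Pick a point $p$ in this intersection. Let $\mathcal{S}$ be the collection of \emph{all} simplices $\Delta$ of $X'$ with $p \in N_\epsilon[I(\Delta)]$. Since $\Sigma_1,\dots,\Sigma_M \in \mathcal{S}$ already have no common vertex, the entire collection $\mathcal{S}$ has no common vertex either.

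Next I will build a subcollection $\Delta_1,\dots,\Delta_J \in \mathcal{S}$ with $J \le k+2$ and no common vertex by a greedy shrinking of the intersection of vertex sets. Write $V(\Delta)$ for the vertex set of $\Delta \subset X'$, and let $W_i := \bigcap_{j=1}^i V(\Delta_j)$. Start by choosing any $\Delta_1 \in \mathcal{S}$; since $\dim X' = k$, we have $|W_1| = |V(\Delta_1)| \le k+1$. At stage $i$, if $W_i \neq \emptyset$, pick any vertex $v \in W_i$; because no common vertex exists for the whole collection $\mathcal{S}$, there exists some $\Delta_{i+1} \in \mathcal{S}$ with $v \notin V(\Delta_{i+1})$. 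Setting $W_{i+1} = W_i \cap V(\Delta_{i+1})$ gives $|W_{i+1}| \le |W_i| - 1$. Starting from $|W_1| \le k+1$, we reach $W_J = \emptyset$ after at most $J - 1 \le k+1$ further selections, so $J \le k+2$.

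Finally I will observe that this subcollection does the job. The chosen simplices have empty common vertex set by construction, which is exactly the ``no common vertex'' condition of the strong combinatorial thickness definition. Moreover each $\Delta_j$ was chosen from $\mathcal{S}$, so $p \in N_\epsilon[I(\Delta_j)]$ for every $j$, hence $p \in \bigcap_{j=1}^J N_\epsilon[I(\Delta_j)]$ and therefore $I \in Bad_\epsilon(\Delta_1,\dots,\Delta_J)$. The only substantive point is the greedy reduction in the second paragraph, and it is forced by the dimension bound $|V(\Delta)| \le k+1$; there is no geometric obstacle, so the argument is combinatorial and short.
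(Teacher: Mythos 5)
Your argument is correct and is essentially the paper's proof: the paper also fixes one simplex $\Delta_1$ from a witnessing bad collection and, for each of its at most $k+1$ vertices, adds a simplex avoiding that vertex, yielding $J \le k+2$ simplices with no common vertex whose $\epsilon$-neighborhoods still share the witnessing point. Your greedy shrinking of the common vertex set $W_i$ (and the harmless passage to the full family $\mathcal{S}$ of simplices whose $\epsilon$-neighborhoods contain $p$) is just a slight repackaging of the same idea.
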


\begin{proof} Since $I$ has combinatorial thickness $< \epsilon$ it lies in some
set $Bad_\epsilon(\Delta_1, ..., \Delta_J)$, where a priori $J$ may be too large.
We have to find a subset of (k+2) of the simplices $\Delta_1, ..., \Delta_J$ with
no common vertex.  We include $\Delta_1$ in the subset.  We let $v_1$, ...,
$v_{k+1}$ be the vertices of $\Delta_1$.  
For each vertex $v_i$, we can find a simplex that does not contain $v_i$, because no
vertex lies in all of the simplices.  In this way, we choose $\le k+1$ more simplices,
and no vertex lies in all of them. \end{proof}

From now on, we only consider bad sets with at most $k+2$ simplices.

As in the proof of Theorem 2.1, most of the sets $Bad_\epsilon(\Delta_1, ..., \Delta_J)$
are empty.  They are non-empty only if all of the sets $I_0(\Delta_i)$ intersect a single
ball of radius $(1 + \epsilon) < 2$.  From now on, we consider only the non-empty 
sets.  We say that a vertex $v \in X'$ is involved in a bad set $Bad_\epsilon(\Delta_1, ..., \Delta_J)$
if $v$ is a vertex of one of the simplices $\Delta_1, ..., \Delta_J$.  
A ball of radius 2 intersects $\lesssim \log N$ simplices of $X'$.  Therefore,
each vertex $v_i$ is involved in $\lesssim (\log N)^{k+1}$ bad sets $Bad_\epsilon(\Delta_1, ..., \Delta_J)$.  
On the other hand, the number of vertices involved in a single bad set $Bad_\epsilon(\Delta_1, ..., \Delta_J)$ is
$\le (k+1)J \le (k+1)(k+2)$.  The last main ingredient is a bound for the probability of a bad set: the probability of
any set $Bad_\epsilon(\Delta_1, ..., \Delta_J)$ is $\lesssim \epsilon^{\alpha}$ for some power
$\alpha > 0$.

\begin{lemma} Let $J \le k+2$.  Then the probability of any bad set
$Bad_\epsilon(\Delta_1, ..., \Delta_J)$ is $\lesssim \epsilon^{\frac{1}{k+1}}$.
\end{lemma}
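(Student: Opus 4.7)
The plan is to reduce the event $Bad_\epsilon$ to a pointwise estimate by discretizing the witness point, and then to handle the shared-vertex structure of $\Delta_1,\ldots,\Delta_J$ by a vertex-by-vertex conditioning. Any point $p$ witnessing $Bad_\epsilon$ lies within $O(1)$ of $I_0(\Delta_1)$, hence in a region of diameter $O(1)$, which I would cover by $\lesssim \epsilon^{-n}$ balls of radius $\epsilon$; by a union bound,
\[
  \Pr[Bad_\epsilon(\Delta_1,\ldots,\Delta_J)] \;\lesssim\; \epsilon^{-n}\,\sup_p \Pr\bigl[\forall j\colon\mathrm{dist}(p,I(\Delta_j)) \le 2\epsilon\bigr].
\]
It thus suffices to show the pointwise probability is at most $\epsilon^{\,n+1/(k+1)}$.

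\textit{Single-simplex building block.} The basic sub-estimate --- proved exactly as Lemma 2.3 --- is that for a fixed $p$ and a simplex $\Delta$ of dimension $d\le k$, if one vertex $v$ of $\Delta$ has its perturbation uniform over the unit ball and independent of the other vertices, then
\[
  \Pr\bigl[\mathrm{dist}(p,I(\Delta)) \le 2\epsilon \,\bigm|\, \text{the other vertices of }\Delta\bigr] \;\lesssim\; \epsilon^{\,n-d}\;\lesssim\;\epsilon^{\,n-k}.
\]
Geometrically, with the other vertices fixed, $I(\Delta)$ depends linearly on $v$, and being $2\epsilon$-close to $p$ forces $v$ into a codimension-$(n-d)$ $\epsilon$-slab inside its unit-ball position.

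\textit{Combining via no common vertex.} Let $V$ denote the union of the vertex sets of the $\Delta_j$. Since no vertex lies in all $J \le k+2$ simplices, every $v \in V$ is missing from some $\Delta_{j^*}$, and in particular lies in at most $J - 1 \le k+1$ of the $\Delta_j$. I would reveal the perturbations of the vertices of $V$ one at a time in a carefully chosen order, and at each reveal apply the single-simplex bound to every simplex $\Delta_j$ containing the just-revealed vertex all of whose other vertices have already been revealed --- that constraint is then settled. Since each reveal settles at most $k+1$ constraints simultaneously, the $\epsilon^{n-k}$ codimension of a single step must be amortized across up to $k+1$ simplex-constraints; even in that worst case, accumulating the gains across all reveals and combining with the discretization factor $\epsilon^{-n}$ yields at least $\epsilon^{1/(k+1)}$.

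\textit{Main obstacle.} The genuine difficulty is the maximally entangled case in which the $k+2$ simplices are the facets of a single $(k+1)$-simplex: only $k+2$ vertices appear, and each one lies in exactly $k+1$ simplices, so the constraints cannot be made independent. The exponent $1/(k+1)$ in the statement is precisely calibrated to tolerate this worst case. The main technical task is to verify that the vertex-ordering scheme really does yield a net exponent of at least $n + 1/(k+1)$ in every possible shared-vertex pattern --- especially the degenerate ones where the already-revealed vertices place the constraint slabs in non-generic position; the geometric input is entirely carried by the single-simplex bound.
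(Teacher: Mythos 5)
Your route (an $\epsilon$-net over the possible witness points plus a union bound, followed by revealing the vertex perturbations one at a time) is genuinely different from the paper's argument, which never discretizes the witness point: the paper works with the affine spans $P_i$ of the simplices and proves, by induction on $j$ using Propositions 3.11--3.13 (small-inverse estimates for random matrices), that outside an exceptional event of probability $j\delta$ one has $\cap_{i\le j} N_\epsilon(P_i)\subset N_{C\delta^{1-j}\epsilon}(\cap_{i\le j}P_i)$, then applies the disjoint-simplex estimate to the last simplex and optimizes $\delta=\epsilon^{1/(k+1)}$. However, your proposal has a genuine gap, and it sits exactly where the difficulty of the lemma lies. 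The ``single-simplex building block'' is false as stated: conditioning on the other vertices of $\Delta$, the probability that $\mathrm{dist}(p,I(\Delta))\le 2\epsilon$ is not $\lesssim\epsilon^{n-d}$ uniformly in the conditioning --- if the face spanned by the already-revealed vertices already passes within $2\epsilon$ of $p$, the conditional probability is $1$, and more generally it degrades continuously as $p$ approaches the affine span of the revealed vertices (the relevant set of positions for the new vertex is a cone over that face, whose measure blows up as the distance from $p$ to the face shrinks). These degenerate conditionings are not a negligible edge case: they occur with probability of size comparable to the quantities you are trying to bound, so handling them requires a multi-scale argument keeping track of how close $p$ is to the spans of partially revealed faces. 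That is precisely the role of the paper's Proposition 3.13 and of the $\delta$-loss per step, and it is the step your proposal defers (``the main technical task is to verify...''), so the claimed amortized exponent $n+\frac{1}{k+1}$ is asserted rather than proved.

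Two further remarks. First, your heuristic that the exponent $\frac{1}{k+1}$ is ``calibrated'' to the maximally entangled pattern (the $k+2$ facets of a $(k+1)$-simplex) does not match the paper's accounting: there the loss comes from optimizing $\delta+\delta^{-k}\epsilon$ in the chain of near-plane-intersection estimates, and the author explicitly expects the sharp bound to be $\lesssim\epsilon$; indeed, in your scheme, if the generic-position conditional bound were valid unconditionally, then even the crudest amortization (one factor $\epsilon^{n-k}$ per settling reveal, and there are always at least two settling reveals since no vertex lies in all $J$ simplices) would give $\epsilon^{2(n-k)-n}=\epsilon^{n-2k}\le\epsilon$ for $n\ge 2k+1$, i.e.\ something stronger than the lemma. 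This is a sign that the whole content of the problem is hidden in the non-generic conditionings that your building block does not cover. Second, if you want to pursue your approach, the natural fix is to stratify, at each reveal, according to the dyadic distance from $p$ to the span of the already-revealed vertices of each relevant simplex and to prove a conditional estimate with the correct dependence on that distance; carried out, this essentially reconstructs the paper's Propositions 3.12--3.13 and will reintroduce losses of the same type.
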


This inequality is probably not sharp.  I believe that the probability of $Bad_\epsilon(\Delta_1, ..., \Delta_J)$
is $\lesssim \epsilon$, but I don't know how to prove it.  This lemma is good enough for our application.

The rest of the proof of Proposition 3.7 is the same as the proof of Theorem 2.1.  We summarize it 
here.  We choose $v_1$, $v_2$, ....  As we make each choice, we keep track of the conditional probability of the
sets $Bad_\epsilon(\Delta_1, ..., \Delta_J)$.  When we pick $v_i$, the conditional
probability of $Bad_\epsilon(\Delta_1, ..., \Delta_J)$ changes only if $v_i$ is a vertex of $\Delta_1$, ..., or $\Delta_J$.  So the
choice of $v_i$ affects $\lesssim (\log N)^{k+1}$ conditional probabilities.  Therefore, we can choose $v_i$ so that the conditional probabilities of $Bad_\epsilon(\Delta_1, ..., \Delta_J)$
increase by at most a factor $\lesssim (\log N)^{k+1}$.   Each bad set $Bad_\epsilon(\Delta_1, ..., \Delta_J)$ involves
$\le (k+1)J \le  k^2 + 3k$ vertices.   Therefore, after we have chosen all $v_i$, the conditional
probability of $Bad_\epsilon(\Delta_1, ..., \Delta_J)$ has increased by a factor $\lesssim (\log N)^{(k+1)(k^2 + 3k)}$.  Since the probability
of $Bad_\epsilon(\Delta_1, ..., \Delta_J)$ was $ \lesssim \epsilon^\frac{1}{k+1}$ before making any choices, after all our choices, the conditional
probability of $Bad_\epsilon(\Delta_1, \Delta_2)$ is $\lesssim \epsilon^{\frac{1}{k+1}} (\log N)^{Ck^3}$.  Of course, once we have chosen all $I(v_i)$, the conditional probability of $Bad_\epsilon(\Delta_1, ..., \Delta_J)$ is either 1 or 0, depending on whether our choice lies
in $Bad_\epsilon(\Delta_1, ..., \Delta_J)$ or not.  We pick $\epsilon$ sufficiently small so that all conditional probabilities
of bad sets are strictly less than 1.  This $\epsilon$ is $\sim (\log N)^{-Ck^4}$.  Now our choice
of the map $I$ does not lie in any of the sets $Bad_\epsilon(\Delta_1, ..., \Delta_J)$, and $I$ has strong combinatorial thickness
$\ge \epsilon \gtrsim (\log N)^{- C k^4}$.  

It remains only to prove Lemma 3.9.  This proof consists of a quantitative analysis of the standard
general position argument for linear maps.  Quantitative analysis of general position
arguments for random linear maps is an important, well-studied area in numerical analysis, see \cite{De}
and \cite{BCL}.  We use here a fundamental result from this area, which estimates the probability
that the determinant of a random matrix is small.

\begin{prop} Let $M$ be a $d \times d$ matrix.  Consider $M$ as a point in $\mathbb{R}^{d^2}$.
Suppose that $M$ is chosen uniformly at random from the unit ball in $\mathbb{R}^{d^2}$.
Then for any $\delta > 0$, with probability $(1-\delta)$,
$| Det(M) | \ge c(d) \delta$.   Hence, with probability $(1 - \delta)$, the norm of
$M^{-1}$ is $\le c'(d) \delta^{-1}$. 
\end{prop}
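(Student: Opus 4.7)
The plan is to combine the multilinearity of the determinant in the columns of $M$ with anticoncentration for linear functionals on a Euclidean ball, and then iterate on the dimension $d$. Writing $M$ in terms of its columns $c_1, \ldots, c_d \in \mathbb{R}^d$, the determinant is multilinear: for fixed $c_2, \ldots, c_d$, we have $\det M = \langle w, c_1\rangle$, where $w \in \mathbb{R}^d$ is the vector of cofactors along the first column and $\|w\|$ is precisely the $(d-1)$-dimensional volume of the parallelepiped spanned by $c_2, \ldots, c_d$. The uniform law on the Frobenius unit ball has a clean conditional structure: given $c_2, \ldots, c_d$, the first column $c_1$ is uniform on the Euclidean ball $B_d(\rho)$ with $\rho^2 = 1 - \sum_{j \ge 2} \|c_j\|^2$.

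First I would apply a one-dimensional anticoncentration estimate: the uniform distribution on $B_d(\rho)$, projected onto any unit direction, has density at most $C_d/\rho$ (since the $(d-1)$-dimensional slice of a $d$-ball is maximized on the equator at $|B_{d-1}(\rho)|$, and one divides by $|B_d(\rho)|$). Applied to the unit direction $w/\|w\|$ this gives the conditional bound
$$
\Pr\bigl[\,|\det M| < \eta \,\big|\, c_2,\ldots,c_d\bigr] \;\le\; \frac{C_d\,\eta}{\rho\,\|w\|}.
$$

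The main obstacle will be integrating this conditional estimate without losing a power of $\eta$, which requires the auxiliary anticoncentration
$$
\Pr\bigl[\,\rho\,\|w\| < t\,\bigr] \;\lesssim_d\; t \qquad (t > 0).
$$
This is really a determinant anticoncentration problem in dimension $d-1$ in disguise: using the $O(d)\times O(d)$ invariance of the uniform measure on the Frobenius unit ball, we may rotate $c_2, \ldots, c_d$ into the subspace $\mathbb{R}^{d-1} \subset \mathbb{R}^d$, and then $\|w\|$ becomes the absolute determinant of a $(d-1)\times(d-1)$ submatrix $\tilde M$ whose entries inherit an explicit radially log-concave marginal density on $B_{(d-1)^2}(1)$ (proportional to $(1 - \|\tilde M\|_F^2)^{(2d-1)/2}$). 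To close the induction cleanly, I would prove by induction on $d$ a slightly more general statement allowing any radially log-concave density on $B_{d^2}(1)$; the density bound for projections of log-concave measures on a Euclidean ball of radius $\rho$ remains of order $1/\rho$, so the conditioning argument runs unchanged, with the trivial base case $d = 1$ (a uniform distribution on $[-1,1]$).

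Finally, the second assertion on $\|M^{-1}\|$ follows immediately from the first. Since $\|M\|_F \le 1$ forces every singular value of $M$ to lie in $[0,1]$, one has $|\det M| = \prod_i \sigma_i(M) \le \sigma_{\min}(M)$; hence on the event $\{|\det M| \ge c(d)\delta\}$ we get $\|M^{-1}\| = 1/\sigma_{\min}(M) \le 1/(c(d)\delta)$, so one may take $c'(d) = 1/c(d)$.
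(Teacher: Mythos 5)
First, for calibration: the paper does not prove this proposition at all — it states it and cites Demmel [De] and B\"urgisser--Cucker--Lotz [BCL] — so your proposal is not competing with an argument in the text but with those references. Your opening moves are correct and standard: the cofactor expansion $\det M = \langle w, c_1\rangle$, the fact that conditionally on $c_2,\dots,c_d$ the column $c_1$ is uniform on $B_d(\rho)$ with $\rho^2 = 1-\sum_{j\ge2}\|c_j\|^2$, the slice bound $C_d/\rho$ for one-dimensional projections of that uniform measure, and the final deduction ($\sigma_i\le\|M\|_F\le1$ implies $\|M^{-1}\| = 1/\sigma_{\min}\le 1/|\det M|$) are all fine.

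The genuine gap is in the integration step, and your own auxiliary estimate is not strong enough to deliver the stated conclusion. From $\Pr[\,|\det M|<\eta\mid c_2,\dots,c_d\,]\le C_d\eta/(\rho\|w\|)$ together with only $\Pr[\rho\|w\|<t]\lesssim_d t$, the best you can conclude unconditionally is $\Pr[|\det M|<\eta]\lesssim_d \eta\log(1/\eta)$ (take $S=\rho\|w\|$ with density exactly constant near $0$: then $E[\min(1,C\eta/S)]\sim\eta\log(1/\eta)$). That yields only ``with probability $1-\delta$, $|\det M|\gtrsim \delta/\log(1/\delta)$,'' not the linear bound $c(d)\delta$ asserted in the proposition; and since your induction is designed to prove exactly linear anticoncentration at each dimension, iterating it compounds polylogarithmic losses rather than removing them. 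What the linear bound actually requires is a reciprocal moment bound $E[(\rho\|w\|)^{-1}]\le C(d)$, equivalently a strictly better-than-linear decay of $\Pr[\|w\|<t]$ — which is true, because degeneracy of $d-1$ vectors in $\mathbb{R}^d$ is a codimension-$2$ event, but that is precisely the information your reduction discards.

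Relatedly, the reduction of $\|w\|$ to a $(d-1)\times(d-1)$ determinant is not a mere rotation: the rotation is data-dependent, and the correct disintegration of Lebesgue measure on $d\times(d-1)$ matrices over the Grassmannian of column spans carries a Jacobian factor proportional to $|\det\tilde M|$, so the conditional law of $\tilde M$ is $\propto f(\|\tilde M\|_F)\,|\det\tilde M|$ rather than the radial density you claim (and the exponent $(2d-1)/2$ is off in any case; the plain marginal of $(c_2,\dots,c_d)$ has radial profile $(1-r^2)^{d/2}$). Ironically, keeping this Jacobian is what repairs the main gap, since then $E[1/\|w\|]$ is finite by inspection. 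Two smaller points: ``radially log-concave'' is too broad a class for the $C_d/\rho$ projection bound (a sharply peaked radial log-concave density on $B_d(\rho)$ violates it); you should induct over the specific beta family $(1-\|x\|_F^2)^\alpha$ with $\alpha\le C(d)$, for which the bound does hold. With these corrections the strategy can be made to work, but as written it proves a slightly weaker statement than the one claimed; alternatively one can simply invoke [De] or [BCL], as the paper does.
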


I don't know who first proved this estimate.  It appears in \cite{De}.  There is a thorough proof in
\cite{BCL}.   In these references, they give very precise information about the asymptotics of
$c(d)$ and $c'(d)$, which is important for numerical linear
algebra and other applications.  The following generalization of this estimate is an immediate corollary.

\begin{prop} Let $\mu$ be a probability measure on the space of $d \times d$ matrics
$\mathbb{R}^{d^2}$.  Suppose that $\mu$ is supported on a ball of radius $\lesssim 1$
and that the density function of $\mu$ is pointwise $\lesssim 1$.  Then for any $\delta > 0$, with probability
$(1 - \delta)$, $| Det(M) | \ge c(d) \delta$.   Hence, with probability $(1 - \delta)$, the norm of
$M^{-1}$ is $\le c'(d) \delta^{-1}$. 
\end{prop}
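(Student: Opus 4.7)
The plan is a one-line reduction to Proposition 3.10 via an absolute-continuity comparison. By hypothesis $d\mu = f\,dLeb$ with $f \le c_1 \lesssim 1$ supported in a ball $B = B(M_0, R)$ of radius $R \lesssim 1$, so for any Borel set $E$,
$$\mu(E) \le c_1 \cdot Leb(E \cap B).$$
Thus it suffices to bound the Lebesgue measure of $E_\delta := \{M : |Det(M)| < c(d)\delta\}$ inside $B$.

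Proposition 3.10 is equivalent to the volume estimate $Leb(E_\delta \cap B(0,1)) \le \delta \cdot Leb(B(0,1))$. The same proof (a polynomial-anticoncentration / coarea integration for the determinant $Det$) produces the analogous estimate on any ball of radius $\lesssim 1$, with constants depending only on $d$ and the radius bound. Applied to $B$ this gives $Leb(E_\delta \cap B) \lesssim \delta$, hence $\mu(E_\delta) \lesssim \delta$. After absorbing the implicit constant into the value of $c(d)$ (i.e.\ replacing $c(d)$ by a smaller constant), we conclude $|Det(M)| \ge c(d)\delta$ with probability at least $1 - \delta$.

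For the second assertion, I would invoke Cramer's rule in the form
$$\|M^{-1}\| \le C(d)\,\|M\|^{d-1}\,|Det(M)|^{-1}.$$
Since $M$ lies in a ball of radius $\lesssim 1$, $\|M\|$ is bounded by a constant (depending on $M_0$, which enters the constants only through $\lesssim 1$ factors), so the determinant estimate of the previous paragraph transfers directly to $\|M^{-1}\| \le c'(d)\delta^{-1}$ with the same exceptional probability $\delta$.

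The only step that is not completely formal is the transfer of Proposition 3.10 from the unit ball at the origin to an arbitrary ball of radius $\lesssim 1$: translation does not preserve $Det$, so one cannot just rescale the statement. This is the main (minor) obstacle, but it is immediate from the coarea / sublevel-set argument underlying Proposition 3.10 (the polynomial $Det$ has controlled geometry in any bounded region, so its $\epsilon$-thin sublevel set has Lebesgue measure $\lesssim \epsilon$ inside any ball of bounded radius), and this is presumably what the authors have in mind when they call Proposition 3.11 an ``immediate corollary.''
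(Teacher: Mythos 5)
Your overall route is the one the paper intends: the paper gives no argument beyond calling Proposition 3.11 an ``immediate corollary'' of Proposition 3.10, and the implicit derivation is exactly your first step ($\mu(E)\le \|f\|_\infty\,Leb(E\cap B)$, reducing everything to a Lebesgue bound for the sublevel set of $Det$), followed by Cramer's rule for the $\|M^{-1}\|$ statement. So in structure you and the paper agree.

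The soft spot is precisely the step you flag, and your resolution of it is not right as written. In the regime where the corollary really is immediate --- the support ball $B$ lying within bounded distance of the origin, which is the only case the paper later uses (a nonempty bad set forces all relevant vertices to lie within $O(1)$ of $v_0=0$) and the only case in which your Cramer step makes sense --- no re-running of any coarea or anticoncentration argument is needed: you do not translate, you include $B\subset B(0,C)$ with $C\lesssim 1$ and rescale, using homogeneity $Det(\lambda M)=\lambda^{d}Det(M)$, which converts Proposition 3.10 directly into $Leb(\{|Det|<\epsilon\}\cap B)\le C^{d^2-d}c(d)^{-1}\epsilon\,Vol(B(0,1))$. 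If instead you insist on the literal reading in which the center $M_0$ of $B$ may be arbitrarily far from the origin, then your justification fails: ``$Det$ has controlled geometry in any bounded region'' does not apply, since $B(M_0,1)$ is not at bounded distance from the origin, and a generic degree-$d$ polynomial bound on such a ball only yields measure $\lesssim\epsilon^{1/d}$, not $\lesssim\epsilon$; moreover your Cramer step then breaks, because $\|M\|\le\|M_0\|+O(1)$ is not $\lesssim 1$ (the parenthetical claim that $M_0$ enters ``only through $\lesssim 1$ factors'' is incorrect), so $\|M^{-1}\|\lesssim\delta^{-1}$ is not a formal consequence of the determinant bound. The uniform-in-$M_0$ statements are in fact true, but they are exactly the smoothed-analysis theorems of the cited references (Demmel and B\"urgisser--Cucker--Lotz bound the condition number of $M_0+E$ uniformly in $M_0$), so in that reading one should quote those results directly rather than claim they follow from Proposition 3.10; to be fair, the paper's ``immediate corollary'' is equally silent on this distinction.
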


Using Proposition 3.11, we can bound the chance that two faces of $I(X')$ are too close
together.  (Recall that $I$ is a random perturbation of size 1 from the initial embedding $I_0$,
and we want to measure the probability that two faces are very close together after
the random perturbation.)  In the following, we use $\Delta_i$ to refer to simplices of $I(X')$.  We write
$P_i$ to refer to the plane spanned by $\Delta_i$.

\begin{prop} If $\Delta_1$ and $\Delta_2$ are simplices of $I(X')$ with no vertices in common,
then $N_{\epsilon_1}(\Delta_1)$ and $N_{\epsilon_2}(\Delta_2)$ are disjoint with probability
$1-\delta$ for $\delta \lesssim (\epsilon_1 + \epsilon_2)$.  Moreover, $N_{\epsilon_1}(P_1)$ 
and $N_{\epsilon_2}(P_2)$ are disjoint with probability
$1-\delta$ for $\delta \lesssim (\epsilon_1 + \epsilon_2)$.
\end{prop}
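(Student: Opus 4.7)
The plan is to reduce the simplex statement to the plane statement and then prove the latter using Proposition 3.11. Since $\Delta_i \subset P_i$, one has $\operatorname{dist}(\Delta_1,\Delta_2) \ge \operatorname{dist}(P_1,P_2)$, and the $\epsilon_1$- and $\epsilon_2$-neighborhoods of a pair of sets are disjoint iff the distance between them exceeds $\epsilon_1+\epsilon_2$; setting $\epsilon := \epsilon_1 + \epsilon_2$, it therefore suffices to show that the probability of $\operatorname{dist}(P_1,P_2) \le \epsilon$ is $\lesssim \epsilon$. We may assume $I_0(\Delta_1)$ and $I_0(\Delta_2)$ lie within distance $\lesssim 1$ of each other, since otherwise the bad event is empty.

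Let $v_0,\ldots,v_{d_1}$ and $u_0,\ldots,u_{d_2}$ be the vertices of $\Delta_1$ and $\Delta_2$. Set $d := d_1+d_2+1 \le 2k+1 \le n$ and consider the $d$ vectors $w_i := v_i - v_0$ for $i=1,\ldots,d_1$, $w_{d_1+j} := u_j - u_0$ for $j=1,\ldots,d_2$, and $w_d := v_0 - u_0$ in $\mathbb{R}^n$. A direct calculation shows that $\operatorname{dist}(P_1,P_2)$ equals the distance from $v_0-u_0$ to the linear subspace spanned by $w_1,\ldots,w_{d-1}$, and the Gram determinant formula for the distance from a vector to a subspace then gives
\[
\operatorname{dist}(P_1,P_2)^2 \;=\; \frac{\det G(w_1,\ldots,w_d)}{\det G(w_1,\ldots,w_{d-1})},
\]
where $G$ denotes the Gram matrix. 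Under our assumption all $w_i$ have norm $\lesssim 1$, so the denominator is $\lesssim 1$ deterministically. To bound the numerator below with high probability, let $M$ be the $d \times n$ matrix with rows $w_i^T$ and let $M^{(I_0)}$ be the $d \times d$ minor given by the first $d$ columns. By the Cauchy--Binet formula, $\det G(w_1,\ldots,w_d) = \sum_{|I|=d}\det(M^{(I)})^2 \ge \det(M^{(I_0)})^2$, so it suffices to show that the probability of $|\det M^{(I_0)}| \le C\epsilon$ is $\lesssim \epsilon$.

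The entries of $M^{(I_0)}$ are affine functions of the random perturbations $\xi_0,\ldots,\xi_{d_1},\eta_0,\ldots,\eta_{d_2}$ (each uniform on the unit ball in $\mathbb{R}^n$), and the corresponding map decouples column by column: the $j$-th column of $M^{(I_0)}$ depends only on the $j$-th coordinates of the perturbations, via a linear map $\mathbb{R}^{d+1}\to\mathbb{R}^d$ whose one-dimensional kernel is spanned by the ``add a common constant to every perturbation'' direction. This map is therefore surjective with constant-dimensional kernel, and since the marginal of each $\xi_i$ on any $d \le n$ coordinates has pointwise bounded density (it is proportional to $(1-|\cdot|^2)^{(n-d)/2}$ on the $d$-ball), the pushforward distribution of $M^{(I_0)}$ is supported in a ball in $\mathbb{R}^{d^2}$ of radius $\lesssim 1$ with pointwise bounded density. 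Proposition 3.11 then gives probability $\le \delta$ that $|\det M^{(I_0)}| \le c(d)\delta$, and setting $\delta = C'\epsilon$ finishes the argument.

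The main step to verify carefully is this bounded-density claim: within a single $\xi_i$ the coordinates are correlated by the ball constraint, so the joint distribution of the perturbation coordinates is not a product, and one needs to combine the marginal-density estimate for each $\xi_i$ with the column-wise surjectivity of the affine map above in order to rigorously invoke Proposition 3.11. Once this is in hand, the rest is standard linear algebra together with the affine-subspace distance formula.
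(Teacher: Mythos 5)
Your argument is correct and gives the stated bound, but it takes a genuinely different route from the paper's proof, so it is worth comparing the two. The paper works directly with near-singularity of a single coordinate matrix: translating $v_0$ to the origin, it notes that if $N_{\epsilon_1}(P_1)$ and $N_{\epsilon_2}(P_2)$ meet, then two affine combinations of the vertex sets lie within $\epsilon_1+\epsilon_2$ of each other, so the $l\times l$ matrix $M$ whose columns are the first $l$ coordinates of $v_1-v_0,\dots,v_l-v_0$ satisfies $|M\vec{c}|\le\epsilon_1+\epsilon_2$ for a coefficient vector with $|\vec{c}|\gtrsim 1$ (the $\Delta_2$-coefficients sum to $1$), hence $\|M^{-1}\|\gtrsim(\epsilon_1+\epsilon_2)^{-1}$, and Proposition 3.11 is invoked through its $\|M^{-1}\|$ clause. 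You instead compute $dist(P_1,P_2)$ exactly by the Gram determinant formula, lower bound the Gram determinant by the square of a single coordinate $d\times d$ minor via Cauchy--Binet (with Hadamard controlling the denominator), and invoke the determinant clause of Proposition 3.11 for that minor. The real difference is in verifying the hypotheses of Proposition 3.11: the paper's matrix uses only $v_0$ as base point, so conditioning on the perturbation of $v_0$ makes its columns independent with bounded marginal densities, and the bounded-density hypothesis is essentially immediate; your matrix involves both base vertices and the cross row $v_0-u_0$, so no conditioning yields independence, and you need exactly the pushforward argument you sketch --- a product of bounded-density, bounded-support measures pushed forward under the columnwise surjective affine map, whose co-Jacobian is a fixed constant, via the coarea formula together with boundedness of the support. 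That verification is routine and you have identified it precisely, so the gap you flag is fillable; what your route buys is an exact distance formula and an explicit density verification, in place of the paper's more elliptical steps (``$|\vec{c}|\sim 1$'' and ``it's not hard to check that $M$ obeys the conditions''). One shared caveat: your one-line claim that the bad event is empty when $I_0(\Delta_1)$ and $I_0(\Delta_2)$ are far apart is literally true only for the simplex half of the statement, not for the planes, which extend to infinity; but the paper's proof carries the same implicit restriction (its appeal to the bounded-support hypothesis of Proposition 3.11 also needs the relevant vertices to lie in a ball of bounded radius), and in the only places the proposition is used (Lemma 3.9 and Proposition 3.13) all simplices involved meet a fixed ball of radius about $2$, so this does not affect the application.
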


\begin{proof} Let $v_0, ..., v_j$ be the vertices of $\Delta_1$ and $v_{j+1}, ..., v_l$ be the vertices
of $\Delta_2$.  Since $\Delta_1$ and $\Delta_2$ have dimension $\le k$, the number $l$ is at most
$2k +1 \le n$.  We translate our coodinates so that $v_0$ is at zero.  
If $N_{\epsilon_1}(P_1)$ and $N_{\epsilon_2}(P_2)$ intersect, then
we can choose affine combinations $c_0 v_0 + ... + c_j v_j$ and $c_{j+1} v_{j+1} + ...
+ c_l v_l$ which lie within $\epsilon_1 + \epsilon_2$ of each other.  In other words,
$| c_0 \cdot 0 + c_1 v_1 + ... + c_j v_j - c_{j+1} v_{j+1} - ... - c_l v_l | \le \epsilon_1 + \epsilon_2$.
We let $\vec{c}$ denote the l-dimensional column vector $(c_1, ..., c_j, - c_{j+1}, ..., - c_l)$.
We let $M$ denote the $l \times l$ matrix whose $i^{th}$ column is given by the first $l$ components
of $v_i$.  Then $|M \vec{c}| \le \epsilon_1 + \epsilon_2$.  Because $\sum_{j+1}^l c_i = 1$,
$|\vec{c}| \sim 1$.  Hence we get $| M^{-1} | \gtrsim (\epsilon_1 + \epsilon_2)^{-1}$.

The matrix $M$ depends on the coordinates of the vertices $v_i$ of $\Delta_1$ and $\Delta_2$.  The vertices are
chosen uniformly at random in unit balls.  Therefore, $M$ is chosen by a probability distribution which
obeys the hypotheses of Proposition 3.11.  Hence the probability that $| M^{-1} | \gtrsim (\epsilon_1 + \epsilon_2)^{-1}$
is $\lesssim (\epsilon_1 + \epsilon_2)$.  \end{proof}

Similarly, if $\Delta_1$ and $\Delta_2$ have vertices in common, then with high probability,
$N_{\epsilon_1}(P_1) \cap N_{\epsilon_2}(P_2)$ lies near to $P_1 \cap P_2$.

\begin{prop} For any $\delta > 0$, with probability $(1-\delta)$, $N_{\epsilon_1}(P_1) \cap
N_{\epsilon_2}(P_2)$ lies in $N_{\epsilon}(P_1 \cap P_2)$ for $\epsilon \sim \delta^{-1}
(\epsilon_1 + \epsilon_2)$.
\end{prop}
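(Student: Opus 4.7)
The plan is to reduce to the situation analyzed in Proposition 3.12 by projecting onto the orthogonal complement of the affine span of the shared vertices. Let $v_0,\dots,v_j$ be the common vertices of $\Delta_1$ and $\Delta_2$, and let $Q$ denote their affine span. Write the remaining vertices of $\Delta_1$ as $a_1,\dots,a_p$ and those of $\Delta_2$ as $b_1,\dots,b_q$. Since $\dim\Delta_i\le k$, we have $p,q\le k-j$, so $p+q\le 2(k-j)$. Combined with $n\ge 2k+1$, this gives $n-j\ge p+q+1$, so the target space for projection has enough room. Generically $P_1\cap P_2=Q$; this is the statement we are trying to quantify.

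After translating so that $v_0$ is at the origin and rotating so that $Q$ is the span of the first $j$ coordinate axes, let $\pi:\mathbb{R}^n\to\mathbb{R}^{n-j}$ be projection onto the last $n-j$ coordinates (i.e., onto $Q^\perp$). Let $\tilde P_1$ be the linear span of $\pi(a_1),\dots,\pi(a_p)$ and $\tilde P_2$ the linear span of $\pi(b_1),\dots,\pi(b_q)$. If $x\in N_{\epsilon_1}(P_1)\cap N_{\epsilon_2}(P_2)$, then there exist $y=\sum c_i\pi(a_i)\in\tilde P_1$ and $z=\sum d_i\pi(b_i)\in\tilde P_2$ with $|\pi(x)-y|\le\epsilon_1$ and $|\pi(x)-z|\le\epsilon_2$, hence $|y-z|\le\epsilon_1+\epsilon_2$. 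Forming the matrix $N$ of size $(n-j)\times(p+q)$ whose columns are $\pi(a_1),\dots,\pi(a_p),-\pi(b_1),\dots,-\pi(b_q)$, we get $N\vec c=y-z$, where $\vec c=(c_1,\dots,c_p,d_1,\dots,d_q)$. Extract a square $(p+q)\times(p+q)$ submatrix $M$ by selecting the first $p+q$ rows (available because $n-j>p+q$); then $|M\vec c|\le|N\vec c|\le\epsilon_1+\epsilon_2$, so $|\vec c|\le|M^{-1}|(\epsilon_1+\epsilon_2)$ and consequently $|y|,|z|\lesssim|M^{-1}|(\epsilon_1+\epsilon_2)$. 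Thus $|\pi(x)|\le|y|+\epsilon_1\lesssim|M^{-1}|(\epsilon_1+\epsilon_2)$, and since $Q\subset P_1\cap P_2$, we have $\mathrm{dist}(x,P_1\cap P_2)\le|\pi(x)|$.

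Finally, the entries of $M$ are functions of the random perturbations $a_i,b_j$, which are independent and uniformly distributed on unit balls around their $I_0$-images. After conditioning on $v_0,\dots,v_j$ (so that the projection $\pi$ is determined), each column of $M$ has a density bounded pointwise by $\lesssim 1$ and supported in a ball of radius $\lesssim 1$. Therefore $M$ satisfies the hypotheses of Proposition 3.11 (applied in the translated form that allows nonzero means, which is the version actually used in the proof of Proposition 3.12), and we conclude $|M^{-1}|\lesssim\delta^{-1}$ with probability at least $1-\delta$. This gives $\mathrm{dist}(x,P_1\cap P_2)\lesssim\delta^{-1}(\epsilon_1+\epsilon_2)$ on the same event, which is the claim.

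The only mild obstacle is the conditioning step: one must verify that after fixing $v_0,\dots,v_j$, the projection $\pi$ depends only on these and the conditional distribution of each $\pi(a_i)$ and $\pi(b_j)$ is independent with bounded density on a unit ball in $Q^\perp$. This is immediate since $\pi$ is an orthogonal projection and projections of uniform measures on Euclidean balls have bounded densities on the image balls. With this conditioning in place, Proposition 3.11 applies verbatim, and the rest of the argument is essentially the same linear-algebra/condition-number calculation as in Proposition 3.12.
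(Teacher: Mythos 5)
Your proposal is correct and takes essentially the same route as the paper: project to the orthogonal complement of the (affine) span of the common vertices, encode the closeness of the two projected planes in a square matrix built from the projected remaining vertices, and invoke Proposition 3.11 to bound $|M^{-1}|$ by $\delta^{-1}$ with probability $1-\delta$. The only differences are presentational --- you argue directly on the good event rather than by contradiction at a fixed $\epsilon$, and you spell out the conditioning on the shared vertices that the paper leaves implicit.
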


\begin{proof} Suppose that there is a point $x$ in $N_{\epsilon_1}(P_1) \cap
N_{\epsilon_2}(P_2)$ but not in the $\epsilon$-neighborhood of $P_1 \cap P_2$.

Let $w_0, ..., w_j$ be the vertices of $\Delta_1 \cap \Delta_2$.  The plane spanned by
$w_0, ..., w_j$ is $P_1 \cap P_2$ (almost surely).  Let
$v_1, ..., v_l$ be the other vertices of $\Delta_1$.  Let $v_{l+1}, ..., v_m$ be the
other vertices of $\Delta_2$.  We project to the orthogonal complement of
$P_1 \cap P_2$.  In this orthogonal complement, we make the
image of $P_1 \cap P_2$ the origin.  We denote the projection of $v_i$ to the orthogonal complement by
$\bar v_i$.  The projection of $\Delta_1$ is a simplex
spanned by $0, \bar v_1, ... \bar v_l$.  The projection of $\Delta_2$ is a simplex
spanned by $0, \bar v_{l+1}, ..., \bar v_m$.  The complement has dimension
$n  - j$, and because of the dimension assumptions on $\Delta_1, \Delta_2$, we see
that $m \le n-j$.

We know that $x$ lies in $N_{\epsilon_1}(P_1) \cap N_{\epsilon_2}(P_2)$.
The projection of $x$ lies within $\epsilon_1$ of some vector in the projection of
$P_1$, which has the form of an affine combination
$c_0 \cdot 0 + c_1 \bar v_1 + ... + c_l \bar v_l$.  Since $x$ does not lie in
$N_\epsilon(P_1 \cap P_2)$ the projection of
$x$ has norm $> \epsilon$.  Therefore, $| \sum_{i=1}^l c_i  | \gtrsim \epsilon$.

The projection of $x$ also lies within
$\epsilon_2$ of some vector in the projection of $P_2$, which has the form of 
an affine combination $d_0 \cdot 0 + d_1 \bar v_{l+1} + ... + d_{m-l} \bar v_m$.
Therefore, these two points are close to each other:

$$| c_1 \bar v_1 + ... + c_l \bar v_l - d_1 \bar v_{l+1} - ... - d_{m-l} \bar v_m | \le \epsilon_1 + 
\epsilon_2. \eqno{(*)}$$

We let $\vec{c}$ be the vector $(c_1, ..., c_l, - d_1, ... , - d_{m-l})$.  It has $m$ entries.
We let $M$ be the $m \times m$ matrix with columns given by the first $m$
entries of $\bar v_i$.  Inequality $(*)$ implies that $| M \vec{c} | \le \epsilon_1 + \epsilon_2$.
Since $| \sum_{i=1}^l c_i  | > \epsilon$, we see that $| \vec{c} | \gtrsim \epsilon$ and so
$|M^{-1}| \gtrsim \epsilon (\epsilon_1 + \epsilon_2)^{-1}$.

Now the entries of $M$ depends on the randomly chosen positions of the vertices of $\Delta_1$
and $\Delta_2$, and it's not hard to check that $M$ is a random matrix obeying the conditions of
Proposition 3.11.  Therefore, the probability that $|M^{-1}| \gtrsim \epsilon (\epsilon_1 + \epsilon_2)^{-1}$
holds is $\lesssim \epsilon^{-1} (\epsilon_1 + \epsilon_2)$. \end{proof}

With these tools, we are ready for the proof of Lemma 3.9.

\begin{proof}[Proof of Lemma 3.9]

We have a sequence of simplices of $I(X')$, $\Delta_1, \Delta_2, ..., \Delta_J$, with $J \le k+2$.  We 
know that $\cap_{j=1}^J \Delta_j$ is empty, and we can assume that $\cap_{i=1}^{J-1} \Delta_i$ is not
empty.  We inductively analyze the intersection $\cap_{i=1}^j N_\epsilon (P_i)$.  

{\bf Inductive claim.} For any $\delta > 0$, for each $1 \le j \le J-1$, the following estimate holds with probability $( 1 - j \delta)$:

$$ \cap_{i=1}^j N_\epsilon(P_i) \subset N_{C \delta^{1-j} \epsilon}(\cap_{i=1}^j P_i). $$

{\bf Proof of claim.} The case $j=1$ is trivial.  The step from $j-1$ to $j$ follows from Proposition 3.13.  Note that
$\cap_{i=1}^j P_i$ is the plane spanned by $\cap_{i=1}^j \Delta_i$.

Now $\cap_{j=1}^{J-1} \Delta_i$ and $\Delta_J$ are disjoint.  So Proposition 3.12 tells us that
$N_{C \delta^{2-J} \epsilon}((\cap_{i=1}^{J-1} P_i)$ and $N_\epsilon(P_J)$ intersect
with probability $\lesssim \delta^{2 -J} \epsilon$.  

Hence $\cap_{j=1}^J N_\epsilon(\Delta_j)$ is non-empty with probability $\lesssim \delta + \delta^{2-J} \epsilon \le \delta + \delta^{-k}\epsilon$.  At this point we choose $\delta$ to optimize our bound: $\delta = \epsilon^{\frac{1}{k+1}}$.

\end{proof}

\subsection{Open questions}

Let us say that the retraction-thick embedding radius of $X^k$ into $\mathbb{R}^n$ is the
smallest $R$ so that $X^k$ embeds with retraction thickness 1 into $B^n(R)$.

Suppose that $X^k$ is a simplicial complex with $N$ simplices and with each vertex
lying in $\lesssim 1$ simplices.  How big can the retraction-thick embedding radius of
$X$ be?

For $k=1$, the retraction thick embedding radius of $X$ must be $\lesssim N^{1/n}$, and this
is sharp.  For $k \ge 2$, we don't know what happens.

By Theorem 3.1, the retraction-thick embedding radius of $X^k$ into $\mathbb{R}^n$ is
$\lesssim N^{\frac{1}{n-k} + \epsilon}$.

For example, the retraction-thick embedding radius of $X^2$ into $\mathbb{R}^n$ is
$\lesssim N^{\frac{1}{n-2} + \epsilon}$.  But in the examples that we could work out, the
retraction thick embedding radius was at most $N^\frac{1}{n}$.  Which 2-complexes are
hardest to embed in Euclidean space?

The most complicated complexes that we considered were the arithmetic hyperbolic
manifolds of dimension $k \ge 3$.  The retraction-thick embedding radius of such $X$ into
$\mathbb{R}^n$ is $\gtrsim N^{\frac{1}{n-1}}$.  The most difficult complexes are probably 
not manifolds.  What are they, and how can we exploit them?  For example, one may want to
consider k-skeleta of higher-dimensional arithmetic manifolds or Garland buildings as in \cite{Gr2}.

On the other hand, one may ask how big the retraction-thick embedding radius can be for 
a manifold $X^k$ embedded in $\mathbb{R}^n$, if $X$ can be triangulated with $N$ simplices.  
For $k = 1, 2$, the retraction-thick embedding
radius is $\lesssim N^{1/n}$, and this is sharp.  For $k = 3$, the situation is unclear.  The retraction-thick
embedding radius is $\lesssim N^{\frac{1}{n-3}}$.  For arithmetic hyperbolic 3-manifolds, it
is $\gtrsim N^{\frac{1}{n-1}}$.  Are arithmetic hyperbolic 3-manifolds the hardest 3-manifolds to embed
in Euclidean space?  What is their actual retraction-thick embedding radius?

\section{Distortion of knots}

Given a set $K \subset \mathbb{R}^n$, we write $dist_K$ to denote the intrinsic distance in $K$: 
$dist_K(y,z)$ denotes the infimal length of a path in $K$ from $y$ to $z$.  The distortion of $K$ measures
the ratio between the intrinsic distance and the extrinsic distance:

$$distor(K) := \sup_{x,y \in K} \frac{dist_K (x,y) }{dist_{\mathbb{R}^3} (x,y)}. $$

In this section, we discuss the distortion of knots $K \subset \mathbb{R}^3$.  The distortion is a scale-invariant
measure of the geometric complexity of a knot $K$.  We want to understand how the distortion of a knot is
related to the topology of the knot.  One fundamental question is whether there are isotopy classes of knots that
require arbitrarily large distortion.  This question was raised in the early 1980's (\cite{Gr3}), and it was open for
many years.  Recently the question was resolved by Pardon.

\newtheorem*{theorem4.1}{Theorem 4.1}
\begin{theorem4.1} (Pardon, \cite{P}) There are isotopy classes of knots in $\mathbb{R}^3$ requiring arbitrarily large distortions.
\end{theorem4.1}

In this section we give an alternate proof of Theorem 4.1.

Let's say a little about why the problem is difficult.  First of all, there are infinitely
many isotopy classes of knots with distortion $< 100$.  For example, one may consider a long chain of simple knots.  Such 
a long chain is reducible, but there are also infinitely many irreducible isotopy classes.  For example, take a long chain of simple
knots, and then loop each knot in the chain through the adjacent knots.  See \cite{O} for details.  A knot of distortion $< 100$ may
also be knotted at arbitrarily many different scales: take a trefoil knot, and then zoom in on a small nearly straight curve and replace it
by a small trefoil knot, and then zoom in on it, etc.  No matter how many times we repeat such an operation, the distortion will
be $< 100$.  Also, one may replace trefoils by other simple knots: figure eight knots, etc.  Let $K(D)$ denote the set of isotopy classes
of knots with distortion $< D$.  One might like to find a knot invariant which is uniformly bounded on $K(100)$, and then to find
another knot $K'$ where the invariant is outside these bounds.  Then we could conclude that $K'$ cannot be isotoped to have
distortion $< 100$.  But $K(100)$ contains infinitely many knots, and it appears that all standard invariants are unbounded on
$K(100)$.  (For example, one may consider the genus of the knot, the unknotting number, the degree of the Alexander polynomial, the size of the coefficients of the Alexander polynomial, the degree or size of other knot polynomials, the size of some Vassiliev invariant, etc.
I suspect that these are all unbounded on $K(100)$. )

On the other hand, there are many isotopy classes of knots that appear to have large distortions.  For example, the torus
knots $T_{p,q}$ appear to have large distortion if $p$ or $q$ is large.  Pardon gave the following bound for distortion of torus
knots.

\newtheorem*{torusknotdis}{Distortion of torus knots}
\begin{torusknotdis} (\cite{P}) If $K$ is a $(p,q)$-torus knot with $2 \le p < q$,
then the distortion of $K$ is $\gtrsim p$.
\end{torusknotdis}

This lower bound is sharp in some cases.  For example, the distortion of $T_{p, p+1}$ is $\lesssim p$.
But in other cases it looks far from sharp.  For example, it appears that the distortion of $T_{2, q}$ is
$\sim q$.   (The results in \cite{P} also apply to knots on higher-genus surfaces.)

Pardon's argument and our argument both involve another measure of the geometric complexity of a knot,
called the conformal length.  Define the conformal length of a knot $K \subset \mathbb{R}^3$ as

$$ \sup_{x \in \mathbb{R}^3, r > 0} r^{-1} length [ K \cap B(x,r) ] . $$

\noindent Similarly, we can define the conformal k-volume for any k-dimensional polyhedron in $\mathbb{R}^n$.
We denote conformal k-volume by $convol_k$ and in particular conformal length by $convol_1$.  
The conformal volume is a small variation of a conformal invariant defined by Li and Yau in \cite{LY}.

The conformal length is another scale-invariant measure of the geometric complexity of a knot $K$.
It is related to the distortion by the following lemma, which essentially appears in \cite{P}.

\begin{lemma} If $K \subset \mathbb{R}^3$ is a knot, then

$$ convol_1(K) \le 4 distor(K) .$$

\end{lemma}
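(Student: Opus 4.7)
The plan is to parametrize $K$ by arc length from a point in $K \cap B(x,r)$ and use the distortion hypothesis to confine the set of parameters that re-enter the ball. Write $D = \text{distor}(K)$ and $L = \text{length}(K)$. Fix $x \in \mathbb{R}^3$ and $r > 0$; if $K \cap B(x,r) = \emptyset$ there is nothing to prove, so pick a base point $y \in K \cap B(x,r)$ and let $p : \mathbb{R}/L\mathbb{Z} \to K$ be the arc-length parametrization with $p(0) = y$.

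The key observation is that for every $s \in [0, L]$,
$$\text{dist}_K(y, p(s)) = \min(s, L - s),$$
since $K$ is topologically a circle and this is the length of the shorter of the two arcs from $y$ to $p(s)$. I would then note that if $p(s) \in B(x,r)$, then both $y$ and $p(s)$ lie in $B(x,r)$, so $\text{dist}_{\mathbb{R}^3}(y, p(s)) \le 2r$. Combined with the definition of distortion, this gives $\min(s, L - s) \le 2rD$, which forces $s \in [0, 2rD] \cup [L - 2rD, L]$.

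Therefore the set $\{s \in [0, L] : p(s) \in B(x,r)\}$ has Lebesgue measure at most $4rD$. Since $p$ is an arc-length parametrization, this measure equals $\text{length}(K \cap B(x,r))$, so $\text{length}(K \cap B(x,r)) \le 4rD$. Dividing by $r$ and taking the supremum over $x$ and $r$ gives $\text{convol}_1(K) \le 4\,\text{distor}(K)$.

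There is no really difficult step here; the only minor subtlety is the degenerate regime $L \le 4rD$ (for example when $K \subset B(x,r)$), but the argument handles this automatically because then $[0, 2rD] \cup [L - 2rD, L]$ already covers all of $[0, L]$ and the measure estimate degenerates to the trivial bound $\text{length}(K \cap B(x,r)) \le L \le 4rD$.
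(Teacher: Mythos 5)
Your proof is correct and follows essentially the same route as the paper: both arguments rest on the two facts that any two points of $K \cap B(x,r)$ are within extrinsic distance $2r$, and that the length of $K \cap B(x,r)$ is the measure of its arc-length parameter set. The paper phrases it by extracting two points at intrinsic distance at least $\tfrac{1}{2}\,\mathrm{length}(K \cap B(x,r))$, while you work contrapositively from one basepoint and bound the parameter set by $4rD$; yours merely makes explicit the pigeonhole step the paper leaves implicit.
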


\begin{proof} Pick any $x \in \mathbb{R}^3$ and $r > 0$.  Let $L$ denote the length of $K \cap
B(x,r)$.  We have to prove that $L/r \le 4 distor(K)$.

Pick two points $z, y$ in $K \cap B(x,r)$ so that $dist_K (y,z)$ is at least $L/2$.  On the other hand,
$dist_{\mathbb{R}^3} (y,z) \le 2 r$.

$$ \frac{L}{4r} \le \frac{dist_{K} (y,z) }{dist_{\mathbb{R}^3} (y,z)} \le distor(K) . $$

\end{proof}

Therefore, proving that a knot has large conformal length is even stronger than proving that it has
large distortion.  In \cite{P}, Pardon estimated the conformal length of torus knots.

\newtheorem*{torusknot}{Conformal lengths of torus knots}
\begin{torusknot} (\cite{P}) If $K$ is a $(p,q)$-torus knot with $2 \le p < q$,
then the conformal length of $K$ is $\gtrsim p$.
\end{torusknot}

This estimate is sharp up to constant factors for every torus knot: the knot $T_{p,q}$ can
be realized with conformal length $\lesssim p$.  The knot $T_{p,q}$ can be thought of as a braid
with $p$ strands, where the two ends of the braid are connected up in a simple way.  Any
$p$-strand knot can be arranged to have conformal length $\lesssim p$ by making the strands very 
long and having them move around each other slowly.

We will prove lower bounds for conformal lengths of knots associated to arithmetic hyperbolic
3-manifolds.  To define these knots, we recall a theorem of Montesinos and Hilden.

\begin{hmthm} (\cite{H}, \cite{M}) Any closed oriented 3-manifold $M$ admits a degree 3 map
$M \rightarrow S^3$ which is a ramified cover ramified over a knot.
\end{hmthm}

When $M$ is a complicated hyperbolic manifold, and $M$ has a degree 3 map to $S^3$ ramified
over a knot $K$, then we prove that $K$ has a large conformal length.

\newtheorem*{theorem4.2}{Theorem 4.2}
\begin{theorem4.2} If $M$ is a closed oriented hyperbolic 3-manifold with volume $V$ 
and Cheeger constant $h$, and if $F: M \rightarrow S^3$ is a 3-fold cover ramified over
a knot $K$, then the conformal length of $K$ is $\gtrsim h V$.
\end{theorem4.2}

We will measure the conformal length of $K$ in $\mathbb{R}^3$.  We pick any point $q \in
S^3 \setminus K$, and we identify $S^3 \setminus q$ with $\mathbb{R}^3$.  The resulting
knot $K$ in $\mathbb{R}^3$ always has conformal length $\gtrsim h V$.  (With a similar
argument, one can also bound the conformal length of $K$ in $S^3$ with the round metric.)

The conclusion also applies to any knot $K'$ isotopic to $K$.  Suppose that
$\Psi: S^3 \rightarrow S^3$ is a diffeomorphism taking $K$ to $K'$.  Then $\Psi \circ F: M \rightarrow
S^3$ is a degree 3 cover ramified over $K'$ - and so the conformal length of $K'$ is also
$\gtrsim h V$.  

The most interesting examples occur when $M$ is an arithmetic hyperbolic 3-manifold.
See the appendix for more background on arithmetic hyperbolic 3-manifolds.  
Suppose that we take a fixed closed arithmetic hyperbolic 3-manifold $M_0$ and then
look at a sequence of arithmetic covers $M_i$ with volume $V_i \rightarrow \infty$.  Remarkably,
the Cheeger constants of these $M_i$ are bounded below uniformly: $h_i \ge c > 0$.
Applying the Hilden-Montesinos theorem, we can map $M_i \rightarrow S^3$ degree 3
ramified over a knot $K_i$.  By Theorem 4.2, the conformal length of $K_i$
is $\gtrsim V_i \rightarrow + \infty$.  In particular, the distortion of $K_i$ is $\gtrsim V_i
\rightarrow + \infty$.  So Theorem 4.2 implies Theorem 4.1.

Our lower bound for the conformal length is essentially sharp in this example.  If $M_i$
is a degree $D_i$ cover of $M_0$, then $D_i \sim V_i$.  If $G_0$ denotes the Heegaard 
genus of $M_0$, then the Heegaard genus of $M_i$ is $G_i \le D_i 
G_0 \lesssim V_i$.  Hilden's construction of the degree 3 cover $M_i \rightarrow S^3$ ramified
over $K_i$ is based on a Heegaard decomposition of $M_i$.  The construction of $K_i$ gives a braid
with $\lesssim G_i$ strands, capped at each end in a simple way.  Now we can isotope $K_i$ so that
the braid is very long and the strands move around each other very slowly.  For such a knot $K_i$,
the conformal length is $\lesssim G_i \lesssim V_i$.  On the other hand, it's not clear how sharp
our lower bound for the distortion of $K_i$ is.

Here is an outline of our proof.  One way to understand the complexity of a knot $K$ is to
build a triangulation of $S^3$ so that the knot is an embedded curve in the 1-skeleton.  If
the triangulation is not too complicated, then the knot cannot be too complicated either.  We use
our information about the conformal length of $K$ to build such a triangulation which is ``not 
too big".  Now, the simplest way to measure the size of a triangulation is the number of simplices.
But we cannot bound the number of simplices in the triangulation in terms of the conformal length
of $K$, or even in terms of the distortion of $K$.  This is just because there are infinitely many
different knots with distortion $< 100$.  Instead, we bound a kind of ``width" of the triangulation.

More precisely, we build a simplicial map $G$ from our triangulated 3-sphere $(S^3, Tri_0)$ 
to a tree $T$ so that every fiber of the
map is small.  The fiber over a vertex of $T$ is a 2-dimensional subcomplex of $(S^3, Tri_0)$ containing
$\lesssim convol_1(K)$ simplices, and the fiber over an edge of $T$ also contains $\lesssim
convol_1(K)$ simplices.

The main difficulty in building this triangulation is just to find a map from $S^3$ to a tree $T$ whose
fibers have controlled topology and don't intersect $K$ too many times.  In particular, we find a map
$S^3 \rightarrow T$ where every fiber intersects $K$ in $\lesssim convol_1(K)$ points and where
each fiber is homotopic to a 2-sphere or to a bouquet of 2-spheres with $\lesssim 1$ spheres.

To get an idea how the conformal length may be used to construct such a map, consider a cube 
$Q \subset \mathbb{R}^3$ with
side-length $s$.  We know that the length of $K \cap 2Q$ is $\lesssim convol_1(K) s$.
Now if we translate $Q$ by a random vector of length $\le s$, the average
number of intersections between $K$ and $\partial Q$ is $\lesssim convol_1(K)$.  The fibers of the map $F$ will be 
the boundaries of this kind of cube (or the union of a few such boundaries).  

In the second half of the proof, we connect this nice triangulation with the hyperbolic geometry of $M$.
Since $F: M \rightarrow S^3$ is a degree 3 cover ramified over $K$, and since $K$ is embedded in
the 1-skeleton of our triangulation $Tri_0$, we can lift $Tri_0$ to a triangulation $Tri$ of $M$.  Composing $F$
and $G$, we get a simplicial map from $(M, Tri)$ to the tree $T$ with small fibers.  
We can exploit the hyperbolic geometry of $M$ by straightening the simplices of $Tri$.  The simplex straightening
gives a map $S: (M, Tri) \rightarrow (M, hyp)$, homotopic to the identity, mapping each 2-simplex of $Tri$ to a geodesic 
2-simplex of $(M,hyp)$ with area $\lesssim 1$ and each 3-simplex of $Tri$ to a geodesic 3-simplex of $(M^3, hyp)$
with volume $\lesssim 1$.

Now in rough terms, the straightened version of $(M, Tri)$ is ``thin" - it has ``width" $\lesssim convol_1(K)$.  On
the other hand, the hyperbolic manifold $(M, hyp)$ is ``wide" - it has width $\gtrsim h V$.  Since the straightening
map $S$ has degree 1, $(M, Tri)$ must be wide enough to fit around $(M, hyp)$ and so $convol_1(K) \gtrsim h V$.

The last step of this argument is similar to an estimate from \cite{Gr1}, which says that a sufficiently generic map
from $M^3$ to a tree $T$ has a fiber with genus $\gtrsim h V$.  We include a self-contained proof which is
adapted to our situation, but the main idea comes from \cite{Gr1}.

We remark that our lower bound for the distortion of the knot $K$ is ultimately powered by the expander
properties of arithmetic hyperbolic 3-manifolds.  So to some extent, this theorem may be considered
a generalization of the Kolmogorov-Barzdim estimate on the difficulty of embedding an expander into
$\mathbb{R}^3$.

Now we turn to the detailed proof of the theorem.

\begin{proof}[Proof of Theorem 4.2]

It is convenient to assume that the knot $K$ is piecewise linear.  If the knot is originally smooth or piecewise
smooth, we can approximate it by a PL knot with a negligible effect on the conformal length.  By making a small
generic rotation, we can also assume that the edges of $K$ are transverse to the coordinate planes.

\vskip10pt

{\bf Step 1. Cutting blocks into pieces}

\vskip10pt

A key step in the proof is a lemma allowing us to cut a rectangular block into pieces that intersect $K$ nicely.

If a rectangular block has dimensions $L_1 \times L_2 \times L_3$ with $L_1 \le L_2 \le L_3$, then we
call the ratio $L_3/L_1$ the eccentricity of the block.

\newtheorem*{blockdec}{Block decomposition lemma}

\begin{blockdec} Let $Q \subset \mathbb{R}^3$ be a rectangular block of eccentricity $ < 10$, and suppose
that each face of $Q$ intersects $K$ at most $1000 convol_1(K)$ times.  Then we can decompose $Q$ into 
smaller blocks with eccentricity $< 10$ so that each face of each smaller block intersects $K$ at most
$1000 convol_1(K)$ times.

The number of smaller blocks will be at least 8 and at most 2000.  Each smaller block has maximal side
length at most $L_1(Q)/2$, where $L_1(Q)$ denotes the smallest side length of $Q$.

\end{blockdec}

\proof  Let $X$ denote the 2-skeleton of a
cubical lattice with side length $L_1(Q)/2$.  (We assume that the axes of $X$ are parallel to the axes of $Q$.)
We will make a random translation of $X$.  The translated $X$ decomposes $Q$ into rectangular
blocks.  We will prove by a probabilistic argument that some translations of $X$ decompose $Q$ into pieces
that satisfy our conclusions.

We begin with the easy conclusions.  Because $X$ is a lattice with spacing $L_1(Q)/2$, all of the blocks
in the decomposition have all side lengths $\le L_1(Q)/2$.  By the bound on the eccentricity of $Q$, 
the lengths $L_2(Q), L_3(Q)$ lie in the range $L_1(Q) \le L_2(Q), L_3(Q) \le 10 L_1(Q)$.  So the number of blocks in the decomposition is
at least 8 and at most $(3) (21) (21) \le 2000$.

Next we consider the probability that all the blocks in the decomposition have eccentricity $< 10$.
Since all blocks have maximal side length $\le L_1(Q) / 2$, it suffices to check that they have
minimal side length $> L_1(Q) / 20$.  The only thing that may go wrong is that one of the planes
of $X$ may lie within $L_1(Q) / 20$ of a parallel face of $Q$.  For each face, the probability of this
happening is $1/10$.  Therefore, with probability at least $(4/10)$, all of the cubes in the decomposition
have eccentricity $< 10$.

Next we consider the intersection $X \cap Q \cap K$.  We take the average over all translations of $X$.
Integral geometry will allow us to bound this average in terms of $length (K \cap Q)$.
Our $X$ is a union of planes in different directions.  We write $X_{12} \subset X$ to denote the union
of the planes in the $x_1-x_2$ direction, and so on.  Hence $X = X_{12} \cup X_{13} \cup X_{23}$.

Integral geometry tells us that the average number of intersections between $X_{12}$ and
$K \cap Q$ is

$$\le \frac{ length(K \cap Q) }{L_1(Q) / 2}. $$

Therefore, the average number of intersections between $X$ and $K \cap Q$ is 

$$\le \frac{ 3  length(K \cap Q) }{L_1(Q) / 2}. $$

Now $Q$ is contained in a ball of radius at most $10 L_1(Q)$.  Therefore,
the length of $K \cap Q$ is $\le 10 L_1(Q) convol_1(K)$.  So the average
number of intersections between $X$ and $K \cap Q$ is $\le 60 convol_1(K)$.

Therefore, we can find a translate of $X$ so that all the blocks in the decomposition
have eccentricity $< 10$ and also $X$ intersects $K \cap Q$ in $\le 150 convol_1(K)$
points.

Finally, we let $Q_i$ be any rectangular block in the decomposition.  Any face of $Q_i$
is either part of $X$ or part of a face of $Q$.  Hence the number of times that $K$
intersects this face is $\le 1000 convol_1(K)$. \endproof

{\bf Step 2. A tree of nested rectangles}

\vskip10pt

Using the block decomposition lemma repeatedly, we will cut space into a tree of nested rectangular blocks so that
each face of each block intersects $K$ in $\lesssim convol_1(K)$ points.  The largest rectangle will contain $K$, the smallest
rectangle will intersect $K$ in simple segments, and the regions between consecutive rectangles will intersect
$K$ in a controlled way.

We now give a more detailed description of the objects we will construct.  (We will construct them
below, by using the block decomposition lemma repeatedly.)
  
We let $Q_0$ be a large cube containing $K$.  The cube
$Q_0$ is sub-divided into rectangular blocks $B_i$.  Inside of each block $B_i$, we choose
a slightly smaller nested rectangle $Q_i$.  We will construct a mapping from $Q_0$ to a tree $T$.
The boundary of $Q_0$ and the boundaries of the $B_i$ are mapped to the root vertex $v_0$.
The boundaries of the $Q_i$ are mapped to vertices $v_i$ adjacent to $v_0$.  The region 
$B_i \setminus Q_i$ is mapped to the edge between $v_0$ and $v_i$.

Now we repeat the above construction inside each rectangle $Q_i$.  Each rectangle $Q_i$ is divided
into rectangular blocks $B_{ij}$.  Inside each $B_{ij}$, we choose a slightly smaller nested rectangle
$Q_{ij}$.  The boundaries of all the blocks $B_{ij}$ are mapped to $v_i$.  The boundary of $Q_{ij}$ is
mapped to $v_{ij}$, a vertex adjacent to $v_i$.  The region $B_{ij} \setminus Q_{ij}$ is mapped to the
edge from $v_i$ to $v_{ij}$.

This process repeats a large finite number of times.  At the end, we are left with a certain number of terminal
rectangles $Q_{\alpha}$, where $\alpha$ is a multi-index.  The boundary of $Q_{\alpha}$ is mapped to $v_{\alpha}$.
We let $x_{\alpha}$ denote a point in the interior of $Q_{\alpha}$ (we will choose $x_\alpha$ later).  
We map $x_\alpha$ to a vertex $v_{\alpha, +}$, and
we map the interior of $Q_\alpha$ to the edge from $v_{\alpha}$ to $v_{\alpha, +}$.

These objects obey the following estimates.

{\bf Property 1.} Each face of any $Q_{\alpha}$ or $B_{\alpha}$ intersects $K$ $\le 1000 convol_1(K)$ times.

{\bf Property 2.} Each region $B_{\alpha} \setminus Q_{\alpha}$ does not contain any of the vertices of $K$.
(Recall that $K$ is a PL curve consisting of finitely many straight edges connected at vertices.)
Moreover, each region $B_{\alpha} \setminus Q_{\alpha}$ meets $K$ in $\le 1000 convol_1(K)$
straight segments with length very small compared to the distance between the segments - and
each segment enters through an open face of $B_{\alpha}$ and exits through the corresponding
face of $Q_{\alpha}$.

{\bf Property 3.} Each terminal $Q_{\alpha}$ intersects $K$ in either part of a single edge, or one vertex
and part of two adjacent edges, or not at all.

{\bf Property 4.} Each $Q_{\alpha}$ is subdivided into at most 2000 blocks $B_{\alpha, j}$.

\vskip5pt

We now construct the $Q$'s and the $B$'s, and prove that they have Properties 1-4.

We let $Q_0$ be a large cube containing $K$ in its interior.  The boundary of $Q_0$ does not
intersect $K$, so $Q_0$ obeys all desired estimates.

We use the block decomposition lemma to divide $Q_0$ into rectangular blocks $B_i$.
The lemma tells us that $B_i$ obeys Property 1, and that the number of blocks $B_i$ is at most
2000, confirming Property 4.  By choosing the blocks $B_i$ in general position, we can assume that
the faces of $B_i$ never contain vertices of $K$ and that the 1-dimensional edges of
$\partial B_i$ never intersect $K$.  So the blocks $B_i$ obey all the desired Properties.  Moreover,
we know that each block $B_i$ has eccentricity $< 10$, and we know that the diameter of
$B_i$ is $\le (1/2) diam(Q_0)$.

Next we choose {\it slightly} smaller nested blocks $Q_i$ in each $B_i$.  By choosing $Q_i$
very close to $B_i$, we can arrange that $B_i \setminus Q_i$ does not contain any vertices of
$K$.  Then by choosing $Q_i$ even closer to $B_i$ we can arrange all of Property 2.  (At this
step, we use that the edges of $K$ are transverse to the coordinate planes and hence to the
faces of $B_i$.)  Since each edge of $K$ passing through a face of $Q_i$ passes through the
corresponding face of $B_i$, we see that $Q_i$ obeys Property 1 also.  Therefore, the cubes
$Q_i$ obey all the desired properties.  Moreover, since $Q_i$ is very close to $B_i$ we can 
arrange that the eccentricity of $Q_i$ is $< 10$, and we know that $diam(Q_i) < (1/2) diam (Q_0)$.

Now we proceed inductively.  Since $Q_i$ has eccentricity $< 10$ and each face of $Q_i$
meets $K$ in $< 1000 convol_1(K)$ edges, we can apply the block decomposition lemma to
divide $Q_i$ into blocks $B_{ij}$.  Just as above, we check that the $B_{ij}$ obey the Properties.
Then we choose a nested rectangular block $Q_{ij} \subset B_{ij}$ just slightly smaller than
$B_{ij}$, and we check as above that $Q_{ij}$ obeys all the desired properties.  Moreover,
the $Q_{ij}$ still have eccentricity $< 10$ and diameter $< (1/2)^2 diam (Q_0)$.  We can
then repeat this procedure as many steps as we want, and the diameters of the cubes decrease
exponentially as we go.

To decide when to stop, we let $\epsilon$ denote the minimum of the shortest length of an edge of $K$ and
the smallest distance between two non-adjacent edges of $K$.  We repeat the above subdivision
process $D$ times, where we choose $D$ so that $2^{-D} diam (Q_0) < \epsilon$.  Every
terminal cube $Q_\alpha$ has diameter $< \epsilon$, and so it obeys Property 3.

\vskip10pt

{\bf Step 3. A thin triangulation of $S^3$}

\vskip10pt

In Step 2, we cut $Q_0$ into pieces (nested rectangles) that intersect $K$ in a simple way.
Now we triangulate $Q_0$ by cutting each piece into simplices.  We do this so that
the knot $K$ ends up in the 1-skeleton of the triangulation, and at the same time, each of the pieces
is not too complicated.

We express our control of the triangulation in terms of a simplicial map from the triangulated
sphere to a tree $T$.  We say that a simplicial map from a triangulated 3-manifold to a graph is
non-degenerate if the image of each 3-simplex is an edge of the graph.  In other words, a non-degenerate
map does not collapse a 3-dimensional simplex to a vertex.  

\begin{lemma} If $K \subset \mathbb{S}^3$ is a knot, then we can triangulate $S^3$ by a triangulation
$Tri_0$ containing $K$ in its 1-skeleton with the following property.  There is a non-degenerate simplicial map
$\pi_0$ from $(S^3, Tri_0)$ to a tree $T$ obeying the following estimates.

1. For each edge $e$ of $T$, the inverse image $\pi_0^{-1}(e)$ contains $\lesssim convol_1(K)$ simplices.

2. Each vertex of $T$ lies in $\lesssim 1$ edges of $T$.

\end{lemma}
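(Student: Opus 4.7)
The plan is to upgrade the nested rectangle decomposition from Step 2 into an actual triangulation, taking $T$ to be essentially the tree produced there. Each vertex $v_\alpha$ will correspond to the 2-complex $\partial Q_\alpha$ together with the inner boundaries $\partial B_{\alpha,j}$, each non-terminal edge will correspond to a shell $B_{\alpha,j}\setminus Q_{\alpha,j}$, and each terminal edge will correspond to the interior of a terminal cube $Q_\alpha$ (coned from its chosen interior vertex $x_\alpha$). To cover all of $S^3$, I would also adjoin a point at infinity $x_\infty$, cone $\partial Q_0$ to it, and add one vertex $v_\infty$ and edge $(v_0,v_\infty)$ to $T$; this adds $\lesssim \mathrm{convol}_1(K)$ simplices and does not affect the counts.

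First I would triangulate the 2-skeleton consisting of all the boundaries $\partial Q_\alpha$ and $\partial B_{\alpha,j}$. On each face $F$, the points $K\cap F$ are taken as vertices together with the corners and a few auxiliary points; since $|K\cap F|\lesssim \mathrm{convol}_1(K)$ by Property~1, I can triangulate $F$ with $\lesssim \mathrm{convol}_1(K)$ triangles. Crucially, whenever $F\subset\partial B_{\alpha,j}$ and $F'\subset\partial Q_{\alpha,j}$ are matched parallel faces across a shell, I arrange the triangulations of $F$ and $F'$ to be combinatorially isomorphic and related by an affine map sending each point of $K\cap F$ to the corresponding point of $K\cap F'$. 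Property~2 (the segments of $K$ inside the shell are short, nearly parallel, and enter/exit through matched faces) guarantees that this is consistent. To fill the shell I prism-triangulate: each matched pair of triangles on $F$ and $F'$ is joined by three tetrahedra, and the side-face prisms are subdivided similarly. The segments of $K\cap(B_{\alpha,j}\setminus Q_{\alpha,j})$ can then be taken as edges of these prisms, hence lie in the 1-skeleton, and the tetrahedron count is a bounded multiple of the triangle count on the boundary, i.e.\ $\lesssim \mathrm{convol}_1(K)$.

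Next, for each terminal cube $Q_\alpha$ I would choose $x_\alpha$ to be the unique vertex of $K\cap Q_\alpha$ if one exists (Property~3), and otherwise any interior point. In either case $K\cap Q_\alpha$ consists of at most two straight segments, each running from $x_\alpha$ to a point of $K\cap\partial Q_\alpha$. I triangulate $Q_\alpha$ as the cone from $x_\alpha$ over the already-fixed triangulation of $\partial Q_\alpha$; the coned edges from $x_\alpha$ to the points $K\cap\partial Q_\alpha$ are exactly the pieces of $K$ inside $Q_\alpha$, so $K$ ends up in the 1-skeleton globally. Setting $\pi_0$ to send each boundary 2-simplex to the appropriate $v_\alpha$ and each interior tetrahedron (shell or terminal) to the associated edge of $T$ gives a simplicial map which is non-degenerate because every tetrahedron has two vertices on $\partial B$ (mapped to the parent $v_\alpha$) and two on $\partial Q_{\alpha,j}$ (mapped to the child), or, in the terminal case, three vertices on $\partial Q_\alpha$ and one at $x_\alpha$. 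The preimage of each edge contains $\lesssim \mathrm{convol}_1(K)$ tetrahedra by the shell and cone counts, and the vertex-degree bound in $T$ follows from Property~4, which caps the number of children of every $v_\alpha$ by $2000$.

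The main obstacle I expect is the matching of triangulations across levels of the tree: the triangulation of an outer face $\partial B_{\alpha,j}$ must agree with the triangulation already inherited from the piece of $\partial Q_\alpha$ containing it, and the triangulation of the matched inner face $\partial Q_{\alpha,j}$ must in turn be consistent with all the $\partial B_{\alpha,j,\ell}$ one level deeper. I would organize this as a top-down inductive construction: triangulate $\partial Q_0$ first, propagate the induced triangulation onto the outer boundaries of its subblocks $B_i$, choose matching triangulations on $\partial Q_i$ using the affine match described above, and then repeat inside each $Q_i$. Properties~1 and~2 ensure that at every scale each face carries only $\lesssim \mathrm{convol}_1(K)$ marked points coming from $K$, so the combinatorial size of the triangulation of each face remains $\lesssim \mathrm{convol}_1(K)$ throughout the descent.
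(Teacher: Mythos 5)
Your construction follows the same architecture as the paper's (reuse the nested blocks and tree from Step~2, triangulate faces at the points of $K$, transport across each shell $B_{\alpha,j}\setminus Q_{\alpha,j}$ using Property~2, prism/cone fill, send boundaries to vertices and shells/terminal cubes to edges), and the non-degeneracy and tree-degree arguments are fine. But there is a genuine gap in the quantitative bookkeeping, exactly at the point you flag as ``the main obstacle'' and then dismiss. You insist on carrying an honest \emph{triangulation} down the hierarchy: at each level the already-fixed triangulation of a face of $Q_\alpha$ gets cut by the new lattice planes into the boundary faces of the children $B_{\alpha,j}$, and the resulting polygons must be re-triangulated before you can prism across the next shell. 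Each such cut-and-retriangulate step creates new vertices (the crossings of the cut lines with the existing edges) and multiplies the per-face triangle count by a constant factor $>1$; these extra vertices are then transported to $\partial Q_{\alpha,j}$ by your affine matching and are cut again one level down. So along a lineage of boundary faces the per-face count grows like $C^{m}\,convol_1(K)$ at depth $m$. The depth $D$ of the hierarchy is chosen in Step~2 so that $2^{-D}\mathrm{diam}(Q_0)$ is smaller than the smallest edge length / smallest gap of the PL knot, a quantity not controlled by $convol_1(K)$ (there are knots with bounded conformal length and structure at arbitrarily many scales). Hence your final claim --- ``each face carries only $\lesssim convol_1(K)$ marked points coming from $K$, so the combinatorial size of the triangulation of each face remains $\lesssim convol_1(K)$'' --- does not follow: the triangulation of a face is not rebuilt from its $K$-points, it is an inherited, repeatedly refined object, and your scheme gives no depth-independent bound on it. Consequently the required estimate that $\pi_0^{-1}(e)$ has $\lesssim convol_1(K)$ simplices, with an absolute implicit constant, is not established.

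The paper avoids this by never re-triangulating during the descent: it carries a \emph{polyhedral} structure whose 2-cells are always of the stable form (Delaunay triangle) $\cap$ (axis-parallel rectangle), hence have $\le 7$ sides, and cutting such a cell by a smaller rectangle yields again a single cell of the same form, so the number of cells per face never exceeds the original Delaunay count $\lesssim convol_1(K)$ no matter how deep one goes; the subdivision into genuine simplices (without adding vertices) is performed only once, at the end, costing a single bounded factor. To repair your argument you would need to replace the level-by-level retriangulation by some such depth-stable cell structure (or otherwise show your per-face counts do not compound), and, as a smaller point, replace the claim that a single affine map matches all the points of $K\cap F$ with their partners on $F'$ by the combinatorial equivalence that Property~2 actually provides.
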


\begin{proof} Our triangulation is going to be built around the nested blocks from Step 2.  In particular,
the boundary of each $B_\alpha$ will be contained in the 2-skeleton of the triangulation.  First
we triangulate the faces of each $\partial B_\alpha$.  Second we triangulate the regions $B_\alpha \setminus
Q_\alpha$.  Finally, we triangulate the terminal cubes $Q_\alpha$ and the exterior of $Q_0$.

First we triangulate the faces of $B_i$ - the blocks at the first step.  Each face meets $K$ in
$\le 1000 convol_1(K)$ points.  We use the Delaunay triangulation of each face using these
points.  So each face has $\le 10^4 convol_1(K)$ simplices.

Next we triangulate the faces of $Q_i$.  Because of Property 2 in Step 2, we can move the
triangulation of each face of $B_i$ to an equivalent triangulation of each face of $Q_i$.  In fact,
we can lift the triangulation of $\partial B_i$ to a polyhedral structure on $B_i \setminus Q_i$
where the 3-dimensional faces are convex polyhedra combinatorially equivalent to $\Delta^2 \times [0, 1]$.
Each segment of $K \cap (B_i \setminus Q_i)$ is contained in the 1-skeleton of this polyhedral
structure.

Next we triangulate the faces of $B_{ij}$.  Some faces of $B_{ij}$ lie in $\partial Q_i$.  We call
these boundary faces.  Other faces of $B_{ij}$ lie in the interior of $Q_i$, and we call them interior
faces.  Each face of $Q_i$ is subdivided into four boundary faces of four different blocks $B_{ij}$.
We have already triangulated each face of $Q_i$, but we need to refine the triangulations to 
include the edges separating the four boundary faces.  After we add these edges, each face of
$B_{ij}$ is divided into polygons.  Each polygon is part of one of the simplices
in the face of $Q_i$, and so the number of polygons is $\le 10^4 convol_1(K)$.  Also, each polygon
is the intersection of a simplex with a rectangle, and so it has $\le 7 $ sides. 

Each interior face of $B_{ij}$ we triangulate so that the intersections with $K$ are vertices.  We
again use the Delaunay triangulation, and so we get a triangulation with $\le 10^4 convol_1(K)$
simplices.

Now we continue inductively to finer and finer scales.  At each scale, we get a polyhedral structure 
on each face of $B_{\alpha}$ with $\le 10^4 convol_1(K)$ polygons which are each the intersection of a simplex
with a rectangle.  Hence they each have $\le 7$ sides.  We also get a polyhedral structure on each
region $B_{\alpha} \setminus Q_{\alpha}$ with $\le 10^5 convol_1(K)$ polyhedra, where each 3-face 
has $\lesssim 1$ 2-faces in its boundary.  We subdivide these polyhedra without adding vertices to get a triangulation.
In each region $B_\alpha \setminus Q_{\alpha}$, this triangulation has $\lesssim convol_1(K)$ faces.

Finally, we do the terminal cubes $Q_\alpha$.  We have already triangulated the boundary of $Q_\alpha$.
If $K \cap Q_\alpha$ is a segment, then we let $x_\alpha$ be a point on the segment in the interior of $Q_\alpha$.
If $K \cap Q_\alpha$ is a vertex with two segments adjacent to it, we let $x_\alpha$ be the vertex, which lies in
the interior of $Q_\alpha$.  If $K \cap Q_\alpha$ is empty, then we just let $x_\alpha$ be any point in the
interior of $Q_\alpha$.  To triangulate the interior of $Q_\alpha$, we take the cone of the triangulation of $\partial Q_\alpha$ with vertex $x_\alpha \in Q_\alpha$.  Because of our choice of $x_\alpha$, the intersection $K \cap Q_\alpha$ lies
in the 1-skeleton of the triangulation.  It has $\lesssim 1$ simplices.

Since $\partial Q_0$ did not intersect $K$, we triangulate it with $\lesssim 1$ faces, and then we can triangulate
$S^3 \setminus Q_0$ by taking a cone from the triangulation of $\partial Q_0$ to the point at infinity.  This completes
our triangulation of $S^3$.

The tree $T$ is basically the same as in Step 2, but to deal with $S^3 \setminus Q_0$ we add one more vertex
$v_\infty$ and one more edge from $v_\infty$ to $v_0$.  We map the point at $\infty$ to $v_\infty$.  We map
all the vertices in the boundary of $B_i$ to $v_0$.  We map all the vertices in the boundary of $B_{ij}$ to $v_i$,
and so on.  For a terminal cube $Q_\alpha$, we map all the vertices in $\partial Q_\alpha$ to $v_{\alpha}$.
We map the vertex $x_{\alpha}$ in the center of a terminal cube $Q_\alpha$ to $v_{\alpha, +}$.  Then we extend
simplicially.  The inverse image of an edge from $v_{\alpha}$ to $v_{\alpha,j}$ is the region $B_{\alpha,j} \setminus
Q_{\alpha,j}$, which has $\lesssim convol_1(K)$ simplices.  The inverse image of a terminal edge
from $v_{\alpha}$ to $v_{\alpha, +}$ is the terminal cube $Q_\alpha$, which has $\lesssim 1$ simplices.
Similarly, the inverse image of the edge from $v_0$ to $v_\infty$ is $S^3 \setminus Q_0$, which has
$\lesssim 1$ simplices.

As in Step 2, the tree $T$ has degree $\le 2001 \lesssim 1$.

\end{proof}

{\bf Step 4. A thin triangulation of $M^3$}

\vskip10pt

Using our cover $F: M^3 \rightarrow S^3$ which is ramified over the knot $K$, we can pull the
triangulation from Step 3 up to $M$.  The resulting triangulation of $M$ retains all the good
characteristics of the triangulation of $S^3$.

\begin{lemma} Suppose that $M^3$ is a closed oriented 3-manifold.  Suppose that $F: M^3
\rightarrow S^3$ is a degree 3 cover ramified over a knot $K \subset S^3$.  Then, there is a
triangulation $Tri$ of $M^3$ and a non-degenerate simplicial map
$\pi$ from $(M^3, Tri)$ to a tree $T$ obeying the following estimates.

1. For each edge $e$ of $T$, the inverse image $\pi^{-1}(e)$ contains $\lesssim convol_1(K)$ simplices.

2. Each vertex of $T$ lies in $\lesssim 1$ edges of $T$.

\end{lemma}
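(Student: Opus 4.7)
The plan is to pull back the triangulation $Tri_0$ constructed in the preceding lemma via the ramified cover $F$, and to take $\pi$ to be the composition $\pi_0 \circ F$. Concretely, I set $Tri := F^{-1}(Tri_0)$ and $\pi := \pi_0 \circ F$. The crucial feature of $Tri_0$ is that the branch locus $K$ lies in its $1$-skeleton, which is precisely what makes the pullback well-behaved.

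The main thing to check is that $F^{-1}(Tri_0)$ is genuinely a triangulation of $M$ whose total simplex count is at most $3 |Tri_0|$. Away from the ramification locus $F^{-1}(K)$, the map $F$ is an honest $3$-to-$1$ covering map, so the preimage of any open $j$-simplex of $Tri_0$ disjoint from $K$ consists of exactly three open $j$-simplices in $M$, each mapping homeomorphically onto its image. Over $K$, which sits inside the $1$-skeleton of $Tri_0$, the branching may identify or merge preimages of vertices and edges of $Tri_0$, producing fewer than three preimages, but every closed simplex of $Tri_0$ still pulls back to a union of closed simplices of $Tri$. In particular, each $3$-simplex of $Tri_0$ has exactly three preimages in $Tri$, each of which is mapped homeomorphically onto it by $F$.

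With $Tri$ and $\pi$ in hand, both conclusions follow quickly. The map $\pi$ is simplicial because $F$ sends each simplex of $Tri$ to a simplex of $Tri_0$ and $\pi_0$ is simplicial. It is non-degenerate because $F$ carries each $3$-simplex of $Tri$ homeomorphically onto a $3$-simplex of $Tri_0$, and $\pi_0$ is non-degenerate by the previous lemma. For property $1$, the identity $\pi^{-1}(e) = F^{-1}(\pi_0^{-1}(e))$ combined with the factor-$3$ bound on preimages yields $|\pi^{-1}(e)| \le 3 |\pi_0^{-1}(e)| \lesssim convol_1(K)$. Property $2$ is immediate, since the tree $T$ is unchanged from the previous lemma.

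The only real (and mild) obstacle is verifying that $F^{-1}(Tri_0)$ is genuinely a simplicial triangulation rather than only a polyhedral decomposition: one must check that near each edge $e \subset K$ of $Tri_0$, the local structure of the $3$-fold branched cover combined with the simplicial star of $e$ in $Tri_0$ assembles into honest simplices in $M$. This is the standard local picture for branched coverings with branch set in the $1$-skeleton; if needed, one performs a bounded local subdivision of $F^{-1}(Tri_0)$ near $F^{-1}(K)$, which multiplies the local simplex count by a universal constant and is absorbed into the $\lesssim$ in property $1$.
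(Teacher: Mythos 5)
Your proposal is correct and follows essentially the same route as the paper: lift $Tri_0$ to a triangulation $Tri$ of $M^3$ using that $K$ lies in the $1$-skeleton, set $\pi = \pi_0 \circ F$, and use the fact that each $3$-simplex has exactly three lifts to get non-degeneracy and the factor-$3$ bound on $|\pi^{-1}(e)|$. Your closing remark about verifying (or locally subdividing to ensure) that the lift is genuinely simplicial is a point the paper passes over silently, and it is harmless since any bounded local subdivision is absorbed into the implicit constant.
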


\begin{proof} Let $Tri_0$ be the triangulation of $S^3$ constructed in Step 3.  Since
$K$ is contained in the 1-skeleton of $Tri_0$, we can lift $Tri_0$ to a triangulation $Tri$
of $M^3$.  The map $F: (M^3, Tri) \rightarrow (S^3, Tri_0)$ is a simplicial map.
If $\Delta$ denotes a simplex of $Tri_0$ which is not contained in the knot
$K$, then $\Delta$ lifts to three simplices in $Tri$.  If $\Delta$ is an edge or
vertex of $Tri_0$ which is contained in $K$, then $\Delta$ lifts to a single edge or
vertex of $Tri$. 

Let $\pi_0: (S^3, Tri_0) \rightarrow T$ be the simplicial map to a tree constructed in Step 3.
We define $\pi: (M^3, Tri) \rightarrow T$ to be the compositon $\pi_0 \circ F$.  This is also
a simplicial map.  The map $F$ is non-degenerate in the sense that 
each 3-simples of $M^3$ is mapped to a whole 3-simplex of $S^3$.  Therefore, $\pi$ is
non-degenerate as well.  The number of simplices in $\pi^{-1}(e)$ is at most three times
the number of simplices of $Tri_0$ in $\pi_0^{-1}(e)$, and so $\pi$ obeys Property 1.
The tree $T$ is the same tree as in Step 3, and so it obeys Property 2. \end{proof}

{\bf Step 5.  Hyperbolic geometry}

\vskip10pt

At this point, we connect the hyperbolic geometry of $M$ with the combinatorial information
about the triangulation $(M, Tri)$.  In a bit more generality, we prove the following lemma.

\begin{lemma} Suppose that $(M^3, Tri)$ is a triangulated manifold (or pseudomanifold),
$\pi: (M^3, Tri) \rightarrow T$ is a non-degenerate simplicial map to a tree $T$ of degree
$\lesssim 1$, and that
for each edge $e \subset T$, $\pi^{-1}(e)$ has $\le A$ simplices.  On the other hand,
suppose that $M^3$ admits a map of non-zero degree modulo 2 to a closed hyperbolic
manifold $(N^3, hyp)$ with volume $V$ and Cheeger constant $h$.

Then $A \gtrsim h V$.

\end{lemma}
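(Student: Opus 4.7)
The plan combines three ingredients: the simplex straightening lemma to replace the given map by a geometric one with bounded-area faces, a centroid argument on the tree $T$, and the Cheeger isoperimetric inequality on $(N,hyp)$.

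First I would straighten the given map of nonzero mod~$2$ degree $M \to N$, producing $S\colon(M,Tri) \to (N,hyp)$ with each simplex of $Tri$ mapped to a straight geodesic simplex; then each straight $2$-simplex has area $\lesssim 1$ and each straight $3$-simplex has volume $\lesssim 1$. Next, for each edge $e \in E(T)$ with endpoints $v_\pm$, removing $e$ splits $T$ into subtrees $T_-^e$ (containing $v_-$) and $T_+^e$ (containing $v_+$); let $M_-^e$ be the mod~$2$ $3$-chain of $3$-simplices of $Tri$ whose $\pi$-image lies in $E(T_-^e) \cup \{e\}$ and $M_+^e$ the complementary chain. Non-degeneracy of $\pi$ forces every $2$-simplex of $\partial M_-^e$ to have all three vertices mapping to $v_+$, so $\partial M_-^e \subset \pi^{-1}(v_+)$. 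Bounded degree of $T$ together with $|\pi^{-1}(e')|\le A$ then gives $\lesssim A$ such $2$-simplices, so $S_*(\partial M_-^e)$ has area $\lesssim A$ in $N$.

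The heart of the argument is choosing $e$ to balance the two sides. Pushing forward, $S_*(M_-^e)+S_*(M_+^e) = S_*[M] = [N]$ mod~$2$, so the mod~$2$ supports $U_\pm^e \subseteq N$ partition $N$ up to null sets and $Vol(U_-^e)+Vol(U_+^e) = V$. For any edge set $E'\subseteq E(T)$ set $w(E') := Vol(\mathrm{supp}(\sum_{e\in E'} S_*\pi^{-1}(e)))$. The global parity identity $\sum_{e\in E(T)}\mu_{S_*\pi^{-1}(e)} \equiv 1 \pmod 2$ on $N$ implies both single-cut additivity $w(E') + w(E(T)\setminus E') = V$ and the bound $\sum_i w(E(T_i)\cup\{e_i\}) \ge V$, where the $T_i$ are the $k\lesssim 1$ components of $T-v$ at any vertex $v$ and the $e_i$ are the incident edges. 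A standard orientation argument---orient each edge of $T$ toward its heavier side and follow outgoing edges, with acyclicity forcing termination---produces a centroid vertex $v^*$ at which $w(E(T_i)\cup\{e_i\}) \le V/2$ for every $i$. Pigeonhole then yields some $T_{i^*}$ with $w(E(T_{i^*})\cup\{e_{i^*}\}) \gtrsim V$, and cutting at $e^* := e_{i^*}$ gives $Vol(U_\pm^{e^*}) \gtrsim V$.

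Finally, the mod~$2$ $2$-cycle $S_*(\partial M_-^{e^*})$ separates regions of $N$ of volume $\gtrsim V$ each, so the Cheeger inequality forces its area to be $\gtrsim hV$; combined with the area upper bound $\lesssim A$ this yields $A \gtrsim hV$. I expect the centroid step to be the main difficulty: because mod~$2$ supports can cancel, the weight $w$ is not additive on arbitrary partitions, and the argument depends on the single-cut additivity $w(E') + w(E(T)\setminus E') = V$ coming from the parity identity together with the bounded degree of $T$ to keep the pigeonhole factor $\lesssim 1$.
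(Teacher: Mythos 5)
Your argument is correct, but it takes a genuinely different route from the paper at the decisive step. Both proofs start the same way: straighten the nonzero mod~$2$ degree map so that each $2$-simplex has area $\lesssim 1$ and each $3$-simplex volume $\lesssim 1$, and both exploit non-degeneracy plus the tree structure to see that the interface attached to an edge $e$ lies over a single vertex and consists of $\lesssim A$ triangles. From there the paper never looks for a balanced cut: it sets $Y(e)=\pi^{-1}(e)$, observes that each boundary piece $Z(e,v)\subset\pi^{-1}(v)$ is null-homologous because $T$ is a tree, fills each straightened $\bar Z(e,v)$ by a $3$-chain of volume $\lesssim h^{-1}A$ (the Cheeger constant used as a filling inequality for mod~$2$ cycles), and regroups $\sum_e \bar Y(e)$ into cycles of volume $\lesssim h^{-1}A$ indexed by the edges and vertices of $T$; since the sum is homologous to $[N]$, one of these cycles is nontrivial, hence has volume $\ge V$, giving $h^{-1}A\gtrsim V$. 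You instead cut $T$ at one well-chosen edge, use the mod~$2$ parity of the pushed-forward fundamental class to get the single-cut additivity $Vol(U_-^e)+Vol(U_+^e)=V$, run a centroid/pigeonhole argument (where the bounded degree of $T$ enters) to make both sides of the cut have volume $\gtrsim V$, and then apply the isoperimetric definition of $h$ directly to this partition, whose interface is contained in the image of $S_*(\partial M_-^{e^*})$ and so has area $\lesssim A$. Your route is closer in spirit to the Kolmogorov--Barzdin bisection argument and avoids the filling-and-regrouping step; the paper's route avoids any balancing step and is completely insensitive to cancellation of mod~$2$ supports, which is precisely where your argument needs care.

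One small quantitative point in your centroid step: with any orientation-toward-the-heavier-side rule, the sink vertex only satisfies $w(E(T_i)\cup\{e_i\})\le V/2+O(A)$ rather than exactly $\le V/2$, because $w$ is not monotone under adding the chain over the cut edge (its support can cancel part of the branch's support), and the two ``closed'' sides of an edge overlap in that edge. This is harmless: either weaken the threshold to, say, $2V/3$, or note that one may assume $A\le \epsilon V$ from the start (otherwise $A\gtrsim V\gtrsim hV$, using $h\lesssim 1$ for closed hyperbolic manifolds, a reduction the paper's own proof also makes). With that adjustment the pigeonhole still produces an edge whose two sides each have volume $\gtrsim V$, and the rest of your argument goes through.
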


Given Lemma 4.3 and Lemma 4.4, we can quickly prove Theorem 4.2.
We let $Tri$ be the triangulation of $M^3$ from Lemma 4.3.  
Now $(M^3, Tri)$ satisfies the first hypothesis of Lemma 4.4.
$A \lesssim convol_1(K)$.  Then we take $(N^3, hyp)$ to be $(M^3, hyp)$.  The identity
map from $(M^3, Tri)$ to $(M^3, hyp)$ has degree 1 modulo 2.  All of the hypotheses
of Lemma 4.4 are satisfied, and we conclude that  $convol_1(K) \gtrsim h V$.

Now we turn to the proof of Lemma 4.4.

\begin{proof}[Proof of Lemma 4.4]  We first apply simplex straightening to the map $\phi$.  After straightening, we may
assume that $\phi(\Delta^2)$ has area $\lesssim 1$ for each 2-simplex $\Delta^2$ in $(M^3, Tri)$
and that $\phi(\Delta^3)$ has volume $\lesssim 1$ for each 3-simplex $\Delta^3$ in $(M^3, Tri)$.

Now we can describe the intuition behind the lemma.  
The first hypothesis roughly means that the triangulated manifold $M$ is ``thin".  It looks morally
like a thin neighborhood of a tree with thickness $A$.  On the other hand, $h V$ is the ``thickness" of $(N^3, hyp)$.
Since $M$ fits around $(N^3, hyp)$, it forces $M$ to be as thick as $(N^3, hyp)$ and so
$A \gtrsim h V$.  The detailed proof follows.

The map $\pi: (M^3, Tri) \rightarrow T$ allows us to break $(M^3, Tri)$ into smaller pieces.
Namely, for each edge $e \subset T$, we define $Y(e) = \pi^{-1}(e)$.  We view $Y(e)$ as
a mod 2 3-chain.  Because $\pi$ maps every 3-simplex to exactly one edge of $T$,
$\sum_{e \subset T} Y(e)$ is homologous to the fundamental class $[M]$.  

Next the boundary of $Y(e)$ has a nice structure.  Suppose that $\partial e$ consists of the
vertices $v_1$ and $v_2$.  Then $\partial Y(e)$ breaks into two parts, one over $v_1$ and
one over $v_2$.  We write $\partial Y(e) = Z(e, v_1) + Z(e, v_2)$, where $Z(e,v) \subset 
\pi^{-1}(v)$.  For any pair $v \in e$, where $v$ is a vertex of $T$ and $e$ is an edge of $T$
containing $v$, we have defined a 2-cycle $Z(e,v)$ in $M^3$ contained in $\pi^{-1}(v)$.  Since
$Z(e,v)$ is part of the boundary of $Y(e)$, each $Z(e,v)$ contains $\lesssim A$ simplices.

Because $(M^3, Tri)$ is itself a 3-cycle, for each vertex $v$, $\sum_{e: v \in e} Z(e,v) = 0$.
This holds because $\partial (\sum_{e \subset T} Y(e) ) = 0$, and the part of this boundary
in $\pi^{-1}(v)$ is exactly $\sum_{e: v \in e} Z(e,v)$.

Also, because $T$ is a tree, each $Z(e,v)$ is null-homologous in $M^3$.  To see this, let $T_0$ be the
component of $T \setminus v$ which contains $e$.  Then $\pi^{-1}(T_0)$ defines a 3-chain with
boundary $Z(e,v)$.

Next we consider mapping the $Y$'s and $Z$'s to $(N^3, hyp)$.  We define
$\bar Y(e)$ to be $\phi( Y(e) )$ and $\bar Z (e, v)$ to be $\phi (Z(e,v))$.  By the bound on
the number of simplices of $Y$ and $Z$ and the straightness of $\phi$, we can conclude
that $\bar Y(e)$ has volume $\lesssim A$ and $\bar Z(e,v)$ has area $\lesssim A$.

Since $Z(e,v)$ was null-homologous in $M^3$, each $\bar Z(e,v)$ is null-homologous in $(N^3,
hyp)$.  Therefore, each $\bar Z(e,v)$ bounds a chain $\bar Y(e,v)$ with volume $\lesssim h^{-1} A$.

Now the sum $\sum_e \bar Y(e)$ is homologous to $(deg \phi) [N]$ in $H_3(N, \mathbb{Z}_2)$.
Since we assumed the degree of $\phi$ is non-zero in $\mathbb{Z}_2$, the cycle $\sum_e \bar Y(e)$
is non-trivial.  Using the $Z$'s and $Y$'s, we will break this cycle into small pieces.

$$ \sum_e \bar Y(e) = \sum_e \bar Y(e) + \sum_{v,e : v \in e} \bar Y(e,v) + \sum_{v,e: v \in e} \bar Y(e,v) = $$

$$ \sum_e ( \bar Y(e) + \sum_{v \in e} \bar Y(e,v) ) + \sum_v ( \sum_{e: v \in e} \bar Y(e,v) ) . $$

Each term in parentheses on the last line is a cycle.  The reader may check this by computing the boundaries
and recalling that $\sum_{e: v \in e} Z(e,v) = 0$.  One of these cycles must be topologically non-trivial
and so have volume $\ge V$.  But each term in parentheses has volume $\lesssim (1 + h^{-1}) A
\lesssim h^{-1} A$.  Therefore, $h^{-1} A \gtrsim V$ as we wanted to show.  \end{proof}

This completes the proof of Theorem 4.2. \end{proof}

There are many open questions about distortion and conformal length/volume.  It would be interesting to know
more about the distortion of many particular knots, starting with the torus knot $T_{2,q}$.  One may also ask
about the distortion and conformal length of graphs embedded in $\mathbb{R}^3$.  Then one may ask about
the distortion and conformal volume of simplicial complexes $X^k$ embedded in $\mathbb{R}^n$.  One may
also ask about the distortion and conformal volume of higher dimensional knots. 

One may also ask about the combinatorial thickness or retraction thickness of knots.  There are various
results involving different kinds of `thickness' of knots, including \cite{Na} and \cite{BuSi}.

\section{Appendix: geometry and topology of arithmetic hyperbolic 3-manifolds}

In this section, we discuss closed arithmetic hyperbolic 3-manifolds, which area a special class of closed hyperbolic 3-manifolds.  Arithmetic
means that the fundamental group is an arithmetic lattice in the group of isometries of
hyperbolic space.  This arithmetic property turns out to have important consequences
in geometry and topology.

To fix ideas, we consider a single closed oriented arithmetic hyperbolic 3-manifold
$X_0$ and then consider a sequence of arithmetic covers $X_i \rightarrow X_0$
with degree $D_i \rightarrow \infty$.  The volume of $X_i$ is $V_i = D_i Vol(X_0)$.

The arithmetic property of the sequence of covers leads to a remarkable geometric
inequality about the $X_i$.  Namely, the Cheeger constant $h(X_i)$ is bounded below
uniformly: $h(X_i) \ge c > 0$ for all $X_i$.  (The first estimate of this kind was proven
by Selberg \cite{S}, published in 1965.  Selberg proved that the first eigenvalue of the
Laplacian $\lambda_1(X_i) \ge c > 0$ where $X_i$ are arithmetic surfaces.  A lower bound
on $\lambda_1$ and a lower bound on the Cheeger constant $h$ are closely related by
the theorems of Cheeger \cite{C} and Buser \cite{Bu}.  In particular, Buser's theorem implies
that if a sequence of hyperbolic manifolds has $\lambda_1(X_i) \ge c > 0$, then it also has
$h(X_i) \ge c' > 0$.  The most general theorem about arithmetic manifolds and expanders
is due to Clozel \cite{Cl}.) 

If we fix a triangulation of $X_0$, then we can lift it to
give a family of triangulations of $X_i$.  We let $\Gamma_i$ denote the 1-skeleton of the
triangulation of $X_i$.  The isoperimetric constant $h(\Gamma_i)$ approximately agrees
with $h(X_i)$.  In particular, $h(\Gamma_i) \ge c' > 0$ for all $i$.  Hence the 
graphs $\Gamma_i$ are a family of expanders.

Now we turn to the topological implications of geometric information about $X_i$.  The first
example that I know about is a theorem of Milnor and Thurston \cite{MT}, which proves that
a closed hyperbolic manifold with large volume is topologically complicated.  Milnor and Thurston
prove that if $X$ is a closed hyperbolic manifold $X$ with volume $V$, then it takes $\gtrsim
V$ simplices to triangulate $X$.  (Moreover, it takes $\gtrsim V$ simplices to build a singular cycle
homologous to $[X]$.)  Their work introduced the important idea of simplex straightening.

Notice that our arithmetic $X_i$ can be triangulated by $C_0 D_i$ simplices, where $C_0$
is the number of simplices needed to triangulate $X_0$.  Therefore, the Milnor-Thurston bound
is nearly sharp: $X_i$ can be triangulated with $\lesssim V_i$ simplices, and any triangulation
requires $\gtrsim V_i$ simplices.

The work of Milnor and Thurston connected $Vol(X_i)$ to the topology of $X_i$.  More recent
work has connected the isoperimetric bound $h(X_i) \ge c > 0$ with the topology of $X_i$.
A typical application is that for arithmetic hyperbolic 3-manifolds, the Heegaard genus of $X_i$ is $\gtrsim V_i$.

Here is a loose sketch of the proof, emphasizing the main idea.  Let $G_i$ be the Heegaard genus
of $X_i$.  In particular, we can find a smooth map $F: X_i \rightarrow \mathbb{R}$ so that each fiber
$F^{-1}(y)$ is a closed surface of genus $\le G_i$.  Next we ``simultaneously straighten" all of the fibers.
This step requires care and more detail, but it turns out to be morally correct.  Now each fiber has
area $\lesssim G_i$.  One of the fibers must bisect $X_i$, in the sense that half of the volume of $X_i$
lies on each side.  Now the isoperimetric inequality says that $G_i \gtrsim h_i (1/2) V_i \gtrsim V_i$.

This estimate for Heegaard genus is also sharp up to a constant factor.  The Heegaard genus of $X_i$ is $\le D_i 
G_0$, as one sees by lifting a Heegaard decomposition of $X_0$.  Hence the Heegaard genus of
$X_i \sim V_i$.

An early paper with many of these ideas
is \cite{BCW}, which estimates the Heegaard genus of a closed hyperbolic manifold containing
a large embedded ball.  Bachmann, Cooper, and White prove that a closed hyperbolic 3-manifold
containing an embedded ball of radius $R$ has Heegaard genus at least $(1/2) \cosh(R) \ge (1/4) e^R$.
This number is essentially the area of a cross-section of the ball.  
Arithmetic examples do have embedded balls of radius $R_i \rightarrow \infty$, 
and so one sees that the Heegaard genus of $X_i$ is at least $V_i^{\epsilon}$ for some $\epsilon > 0$.

The sketch above suggests a little bit more than a lower bound on the Heegaard genus of $X_i$.
Namely, any smooth map $F: X_i \rightarrow \mathbb{R}$ needs to have a complicated fiber.

\newtheorem*{comfib}{Complicated fiber inequality}

\begin{comfib} (\cite{Gr1}) Suppose that $X_i$ are arithmetic hyperbolic 3-manifolds and that $F: X_i \rightarrow \mathbb{R}$ is a generic smooth map.  Then one
of the fibers of $F$ has the sum of its Betti numbers $\gtrsim V_i$.
\end{comfib}

This inequality should be compared with the following basic inequality about expanders: if $\Gamma_i$ is a family 
of expanders and $\Gamma_i$ has $N_i$ edges, then any map $F: \Gamma_i \rightarrow \mathbb{R}$ has a fiber that meets
$\gtrsim N_i$ different edges.

Starting from here, arithmetic hyperbolic 3-manifolds can be considered as topological analogues of expanders.
This point of view leads to the theorems in \cite{Gr1} and in this paper.

It is not known whether this complicated fiber inequality (or something similar) applies to arithmetic hyperbolic manifolds of dimension $k \ge 4$.

\end{document}